\theoremstyle{plain} 
\newtheorem{thm}{Theorem}[section]
\newtheorem{prop}[thm]{Proposition}
\newtheorem{lemma}[thm]{Lemma}
\newtheorem{cor}[thm]{Corollary} 
\newtheorem{question}[thm]{Question}
\newtheorem{conj}[thm]{Conjecture}
\theoremstyle{remark}
\newtheorem{remark}[thm]{Remark}
\theoremstyle{definition}
\newcommand{\CC}{\mathbb{C}}
\newcommand{\NN}{\mathbb{N}}
\newcommand{\QQ}{\mathbb{Q}}
\newcommand{\ZZ}{\mathbb{Z}}
\newcommand{\calO}{{\mathcal O}}
\newcommand{\Qbar}{\overline{\QQ}}
\newcommand{\Dbar}{\overline{D}}
\newcommand{\Cv}{\CC_v}
\newcommand{\Cp}{\CC_p}
\newcommand{\Qp}{\QQ_p}
\DeclareMathOperator{\Orb}{Orb}
\newcommand{\frp}{\mathfrak{p} }
\newcommand{\la}{\langle}
\newcommand{\ra}{\rangle}
\newcommand{\dsps}{\displaystyle}
\begin{document}

\title{Misiurewicz polynomials and dynamical units, Part I}
\date{May19, 2022}
\subjclass[2010]{37P15, 11R09, 37P20}
\author[Benedetto]{Robert L. Benedetto}
\address{Amherst College \\ Amherst, MA 01002 \\ USA}
\email{rlbenedetto@amherst.edu}
\author[Goksel]{Vefa Goksel}
\address{University of Massachusetts \\ Amherst, MA 01002 \\ USA}
\email{goksel@math.umass.edu}

\begin{abstract}
We study the dynamics of the unicritical polynomial family $f_{d,c}(z)=z^d+c\in \CC[z]$.
The $c$-values for which $f_{d,c}$ has a strictly preperiodic postcritical orbit
are called \emph{Misiurewicz parameters},
and they are the roots of \emph{Misiurewicz polynomials}.
The arithmetic properties of these special parameters have found applications in both 
arithmetic and complex dynamics.
In this paper, we investigate some new such properties.
In particular, when $d$ is a prime power and $c$ is a Misiurewicz parameter,
we prove certain arithmetic relations between the points in the postcritical orbit of $f_{d,c}$.
We also consider the algebraic integers obtained by evaluating a Misiurewicz polynomial
at a different Misiurewicz parameter, and we ask when these algebraic integers are algebraic units.
This question naturally arises from some results recently proven by Buff, Epstein, and Koch
and by the second author.
We propose a conjectural answer to this question, which we prove in many cases.
\end{abstract}

\maketitle

\section{Introduction}
Let $f\in \mathbb{C}(z)$ be a rational function.
We denote by $f^n$
the iterates of $f$ by composition, i.e., $f^0(z):=z$, and for each $n\geq 1$,
$f^n:=f\circ f^{n-1}$.
Then $f$ and its iterates map $\mathbb{P}^1(\mathbb{C})=\mathbb{C}\cup\{\infty\}$ to itself. The (\emph{forward}) \emph{orbit}
of a point $x\in \mathbb{P}^1(\mathbb{C})$ is
\[\Orb_f^+(x) := \{f^n(x) : n\geq 0\} .\]

We say that $x\in \mathbb{P}^1(\mathbb{C})$ is \emph{periodic} (of period $n$)
if there is an integer $n\geq 1$ such that $f^n(x)=x$;
in that case, the smallest such integer is the \emph{exact} period of $x$.
More generally, we say $x$ is \emph{preperiodic} if there is some $m\geq 0$
such that $f^m(x)$ is periodic. Equivalently, $x$ is preperiodic if and only if
the orbit of $x$ is finite.
In that case, the smallest $m\geq 0$ such that
$f^m(x)$ is periodic is the \emph{tail length} of $x$.
We say $x$ is \emph{preperiodic of type} $(m,n)$ if $x$ is preperiodic with tail length $m$,
and $n$ is the exact period of $f^m(x)$. That is, we have $f^{m+n}(x)=f^m(x)$
for minimal integers $m\geq 0$ and $n\geq 1$. 

The \emph{critical points} of $f$ are the ramification points of $f$ in $P^1(\mathbb{C})$.
We say that $f$ is \emph{postcritically finite}, or \emph{PCF}, if all of the critical
points of $f$ are preperiodic. If $f\in \mathbb{C}[z]$ is a polynomial, then the critical points of $f$
consist of the point at $\infty$ (which is fixed by $f$) and all the roots of $f'$ in $\mathbb{C}$.
Thus, a polynomial is PCF if and only if all the roots of its derivative are preperiodic.

In this paper, we consider the case of unicritical polynomials,
i.e., polynomials with a single finite critical point (of high multiplicity).
After a change of coordinates, we assume this
critical point is $0$. Thus, throughout the paper, we fix an integer $d\geq 2$,
and we define
\[ f(z) := f_c(z) := f_{d,c}(z) := z^d+c. \]
We may consider $f$ as an element of the two-variable polynomial ring $\ZZ[c,z]$,
but we usually consider $c$ to be a parameter,
and we iterate $f$ in the variable $z$ only. That is,
\[ f^2(z) = (z^d+c)^d + c, \quad f^3(z) = \big( (z^d+c)^d + c \big)^d + c , \quad \ldots . \]
For each integer $i\geq 0$, define the polynomial $a_i(c)\in\ZZ[c]$ by
$a_i(c) := f^i(0)$. Thus, the sequence
\[ a_1=c, \quad a_2 = c^d + c, \quad a_3 = (c^d+c)^d + c, \quad \cdots \]
gives the iterates of the critical point $0$ under $f$.
We are interested in the case that $f$ is PCF, i.e., that this orbit is finite.

To this end, fix a  $d$-th root of unity $\zeta$ that is \emph{not} $1$.
For any integers $m\geq 2$ and $n\geq 1$,
we define the \emph{$(m,n)$-Misiurewicz polynomial}
$G_{d,m,n}^{\zeta}(c)\in \ZZ[\zeta][c]$ to be
\begin{equation}
\label{eq:Gdef}
G_{d,m,n}^{\zeta}(c):= \prod_{k|n} \big( a_{m+k-1} - \zeta a_{m-1}\big)^{\mu(n/k)}
\cdot \begin{cases}
\prod_{k|n} \big( a_{k}\big)^{-\mu(n/k)}
& \text{ if } n|m-1,
\\
1 & \text{ if } n \nmid m-1,
\end{cases}
\end{equation}
where $\mu$ denotes the M\"{o}bius $\mu$-function.
A priori, $G^{\zeta}_{d,m,n}$ is a rational function in $\QQ(\zeta)(c)$, but in fact it 
is a monic polynomial in $\ZZ[\zeta][c]$, as we prove in Section~\ref{sec:poly_proof}.
Its roots are parameters $c_0$, called \emph{Misiurewicz parameters},
for which $f_{c_0}^{m+n}(0)=f_{c_0}^m(0)$ but no earlier iterates $f_{c_0}^i(0)$ coincide;
we say $f_{c_0}$ is \emph{PCF of exact type~$(m,n)$}.
The root of unity $\zeta$ further specifies that
$f_{c_0}^{m+n-1}(0)/ f_{c_0}^{m-1}(0)=\zeta$.
Milnor \cite[Remark 3.5]{Milnor12} conjectured that related polynomials over $\QQ$ are irreducible,
and we make the following corresponding conjecture over $\QQ(\zeta)$:
\begin{conj}
\label{conj:misiurewicz_irr}
Let $d,m\geq 2, n\geq 1$, and $\zeta\neq 1$ a $d$-th root of unity. Then $G_{d,m,n}^{\zeta}$ is irreducible over $\mathbb{Q}(\zeta)$
\end{conj}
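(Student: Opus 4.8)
The plan is to reduce irreducibility over $\QQ(\zeta)$ to a purely local statement at a single well-chosen prime. By the result of Section~\ref{sec:poly_proof}, $G_{d,m,n}^{\zeta}$ is monic in $\ZZ[\zeta][c]$, and $\ZZ[\zeta]$, being the ring of integers of a cyclotomic field, is integrally closed; hence any factorization of $G_{d,m,n}^{\zeta}$ over $\QQ(\zeta)$ into monic factors already lives in $\ZZ[\zeta][c]$, and it suffices to produce one prime $\frp$ of $\ZZ[\zeta]$ over which $G_{d,m,n}^{\zeta}$ is irreducible in $\QQ(\zeta)_{\frp}[c]$. The cheapest way to force $\frp$-adic irreducibility is to arrange that the $\frp$-adic Newton polygon of $G_{d,m,n}^{\zeta}$ in the variable $c$ is a single segment whose slope, in lowest terms, has denominator equal to $\deg_c G_{d,m,n}^{\zeta}$; in the cleanest situations this is just the statement that $G_{d,m,n}^{\zeta}$ is $\frp$-Eisenstein.

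To find $\frp$, I would exploit the recursion $a_{i+1} = a_i^d + c$ together with the elementary congruence $a_i \equiv c \pmod{c^2}$ for all $i \ge 1$ (immediate by induction, as $d \ge 2$), which yields $a_i = c\,v_i(c)$ with $v_i \in \ZZ[c]$ and $v_i(0) = 1$; consequently each difference $a_{m+k-1} - \zeta a_{m-1} = c\,(v_{m+k-1} - \zeta v_{m-1})$ vanishes simply at $c = 0$ with linear coefficient $1 - \zeta \ne 0$. The natural candidates for $\frp$ are primes above a rational prime $p \mid d$: modulo such a $\frp$ the derivative $d z^{d-1}$ vanishes identically, so $f_c$ reduces to an inseparable map, and this is exactly the locus at which ramification in the Misiurewicz tower over the $c$-line should concentrate, so one expects the Newton polygon there to be as steep as the degree permits. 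When $d$ is a prime power, every $\zeta \ne 1$ has the property that $1 - \zeta$ is a uniformizer at the prime $\frp \mid p$, and the core of the argument becomes a careful accounting of the $\frp$-adic valuations of the coefficients of the $v_i$, organized along the M\"{o}bius product in~\eqref{eq:Gdef}, to verify the Eisenstein conditions on the constant and intermediate coefficients of $G_{d,m,n}^{\zeta}$.

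I would first settle the small cases. For $m = 2$ one has $a_{m-1} = a_1 = c$, so the factors become $a_{k+1} - \zeta c$; in particular $G_{d,2,1}^{\zeta} = c^{d-1} + (1-\zeta)$, which is literally $\frp$-Eisenstein whenever $d$ is a prime power, and for general $d$ is handled by the classical irreducibility criterion for binomials (it reduces to checking that $\zeta - 1$ is not a $q$-th power in $\QQ(\zeta)$ for primes $q \mid d-1$, plus a $4$-power condition) --- again a valuation computation. For $n > 1$ and $m = 2$, the M\"{o}bius inversion in~\eqref{eq:Gdef} takes care of the exact-period bookkeeping, and one then runs the Newton-polygon analysis on the resulting polynomial at $\frp \mid p$; for $m > 2$ one would additionally feed in the arithmetic relations among the postcritical points proved earlier in the paper --- available precisely when $d$ is a prime power --- to control the $\frp$-adic valuations of the $v_i$.

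The main obstacle is the general case $d, m, n$. The polynomials $v_i$ grow rapidly in complexity, so the $\frp$-adic Newton polygon of $G_{d,m,n}^{\zeta}$ need not be a single segment, and for $\zeta$ of small order one loses the useful fact that $1 - \zeta$ is a uniformizer; in fact I would expect the clean statement to require $d$ to be a prime power (for instance, when $6 \mid d$ and $\zeta$ is a primitive sixth root of unity, $1 - \zeta$ is itself a root of unity and $G_{d,2,1}^{\zeta} = c^{d-1} + (1-\zeta)$ can acquire a proper linear factor). In full generality a purely local attack seems inadequate, and one would instead need a global input: the connectedness --- equivalently, Galois-transitivity of the roots --- of the appropriate Misiurewicz covering of the $c$-line attached to the tower $c \mapsto f_c^{j}(0)$. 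Proving that appears to be of the same order of difficulty as Milnor's original conjecture and, as far as I can see, remains open.
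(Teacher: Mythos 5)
This statement is a \emph{conjecture}, and the paper does not prove it: it only establishes special cases, namely Corollary~\ref{cor:irr_corollary} ($n=1$ and $d$ a prime power, deduced from the ramification statement in Theorem~\ref{thm:1.1}(b)), plus cited results for $d=2$, $n\le 3$. Your proposal is likewise not a proof, and you concede as much. The only case you actually close is $m=2$, $n=1$ with $d=p^e$, where $G^{\zeta}_{d,2,1}=c^{d-1}+(1-\zeta)$ is Eisenstein at the prime of $\ZZ[\zeta]$ over $p$; that is the same mechanism as the paper's Corollary~\ref{cor:irr_corollary} (total ramification over $p$ of degree equal to $\deg G$), except the paper's route through Theorem~\ref{thm:1.1}(b) handles \emph{all} $m\ge 2$ with $n=1$, not just $m=2$. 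For general $m$ and $n$ your Newton-polygon plan gives no control of the intermediate coefficients (you yourself note the polygon need not be a single segment), so everything beyond $n=1$, prime-power $d$, is a genuine gap rather than a proof.

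Your parenthetical about $6\mid d$ is, however, sharper than you state: it is an actual counterexample to the conjecture as written, so no proof in full generality can exist. Take $d=6$, $m=2$, $n=1$, and $\zeta$ a primitive sixth root of unity, so $\zeta^2-\zeta+1=0$. Then $G^{\zeta}_{6,2,1}(c)=c^{5}+1-\zeta$, and $c_0:=\zeta^{4}$ satisfies $c_0^{5}+1-\zeta=\zeta^{2}+1-\zeta=0$, so this degree-$5$ polynomial has a linear factor over $\QQ(\zeta)$. The root is not spurious: the critical orbit of $f_{6,c_0}$ is $0\mapsto\zeta^{4}\mapsto\zeta^{5}\mapsto\zeta^{5}$, which has exact type $(2,1)$ with $a_2(c_0)/a_1(c_0)=\zeta$. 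So the correct move here is not to attempt a proof but to record this example: the conjecture needs $d$ restricted (e.g.\ to prime powers, matching where your local method and the paper's Theorem~\ref{thm:1.1} actually operate) or reformulated (e.g.\ as irreducibility over $\QQ$ of a suitable product over $\zeta$, closer to Milnor's original statement). Since you only hedge with ``can acquire a proper linear factor,'' verify and state the counterexample explicitly rather than leaving it as an aside.
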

Recent progress in \cite{BEK19,Gok19,Gok20} has proven Conjecture~\ref{conj:misiurewicz_irr}
in the case that $d=2$ and $n\leq 3$, but otherwise, very little is currently known.
Such arithmetic questions have dynamical consequences, as illustrated
by the work of Buff, Epstein and Koch in \cite{BEK19}, who applied these known instances of
Conjecture~\ref{conj:misiurewicz_irr} to prove the first cases
of a different conjecture of Milnor \cite{Milnor93,Milnor09} on the irreducibility
of certain moduli curves arising in complex dynamics \cite[Theorem 1, Theorem 4]{BEK19}.
See also \cite[Section 2]{HT15} for a brief survey of known results on Misiurewicz parameters,
and our companion paper \cite{BG2} for further results in the study of their arithmetic properties.

More broadly, postcritically finite polynomials play a fundamental role in polynomial dynamics.
On the complex dynamical side, Douady and Hubbard \cite[Chapter 8]{DH}
proved that Misiurewicz parameters are dense in the boundary of the Mandelbrot set,
and Favre and Gauthier \cite[Theorem 1]{FG15} further proved that they are equidistributed in an
appropriate sense. Ghioca, Krieger, Nguyen, and Ye \cite[Theorem 3.1]{GKNY17}
generalized this equidistribution to PCF maps in arbitrary dynamical moduli spaces.
Indeed, as proposed by Baker and DeMarco in \cite{BD13}, PCF maps should play
a role in dynamical moduli spaces analogous to CM points on modular curves,
and more generally to special points on Shimura varieties.

Returning to the unicritical family $f_{d,c}$,
while Misiurewicz parameters are ones for which the critical point is strictly preperiodic,
those for which the critical point is periodic are roots of \emph{Gleason polynomials}.
Specifically, for $n\geq 1$, the roots of the Gleason polynomial
\begin{equation}
\label{eq:Gleasondef}
G_{d,0,n}(c) := \prod_{k|n} (a_k)^{\mu(n/k)} \in \ZZ[c]
\end{equation}
are parameters $c_0$ for which the critical point $0$ has exact period $n$ under $f_{c_0}$.
See, for example, \cite{Buff18,BFKP21,HT15}.
\begin{question}
	\label{question:Gleason_irr}
For which $d\geq 2, n\geq 1$, is the Gleason polynomial $G_{d,0,n}$ irreducible over $\QQ$?
\end{question}
As with Conjecture~\ref{conj:misiurewicz_irr}, very little is known about Question~\ref{question:Gleason_irr}.
Even for fixed degree $d$, there is no infinite family of Gleason polynomials which are
known to be irreducible.
When $d=2$, calculations for small periods suggest that $G_{2,0,n}$ is irreducible over $\QQ$
for all $n\geq 1$, but this conjecture remains wide open.
Buff \cite[Proposition 5]{Buff18} observed that the corresponding conjecture is false in general
by showing that $G_{d,0,3}$ has $2$ irreducible factors if $d\equiv 1\pmod{6}$.
\begin{remark}
The definitions of Misiurewicz and Gleason polynomials are not entirely
consistent in the literature. For example, some authors use the family of maps $az^d+1$,
as in \cite{Buff18,BEK19,BFKP21}, instead of $z^d+c$.
In addition, our choice of a root of unity $\zeta$ is another difference
both from those authors and from previous work in \cite{Gok19,Gok20,HT15}.
\end{remark}

In this paper, we consider various arithmetic properties of the orbits $\Orb_f^{+}(c_0)$,
where $c_0$ is a Misiurewicz parameter. Theorem~\ref{thm:1.1}, which we prove using
purely local methods, concerns the case that the degree $d$ is a prime power.
In particular, it generalizes \cite[Theorem~3.1]{Gok19} from prime degrees to prime-power degrees,
and it answers a question raised in \cite{Gok19}.
Here and throughout the paper, when $K$ is a number field, we denote by $\calO_K$
the ring of integers of $K$, and for any $b\in\calO_K$, we write $\la b\ra$ for
the principal ideal generated by $b$.

\begin{thm}
\label{thm:1.1}
Let $d=p^e$ be a prime power, and suppose $f=f_{d,c_0}$ is PCF of exact type $(m,n)$ with $m > 0$.
Let $\zeta:=a_{m+n-1}(c_0)/a_{m-1}(c_0)\neq 1$, and
let $0\leq r\leq e-1$ be the smallest nonnegative integer such that $\zeta^{p^{r+1}}=1$.
Let $K:=\QQ(c_0)$.
Then:
\begin{enumerate}[label=\textup{(\alph*)}]
	\item If $n\nmid m-1$, then 
	$\dsps \la a_i(c_0) \ra ^{p^r (p-1)d^{m-1}} =\begin{cases}
	\la p \ra & \text{ if } n | i, \\
	\calO_K & \text{ if } n\nmid i .
	\end{cases}$
	\item If $n| m-1$,  then 
	$\dsps \la a_i(c_0) \ra ^{p^r (p-1)(d^{m-1}-1)} =\begin{cases}
	\la p \ra & \text{ if } n | i, \\
	\calO_K & \text{ if } n\nmid i .
	\end{cases}$
\end{enumerate}
\end{thm}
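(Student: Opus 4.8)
The plan is to work entirely $p$-adically, localizing at each prime ideal of $\calO_K$ and computing valuations. Fix a prime $\mathfrak{q}$ of $\calO_K$ lying over a rational prime $\ell$, let $v$ be the associated normalized valuation, and let $v_\ell$ be its restriction-normalization so that $v(\ell) = e(\mathfrak{q}/\ell)$. The first step is the easy case $\ell \neq p$: here I claim $v(a_i(c_0)) = 0$ for every $i \geq 1$, which gives the $\calO_K$-part of both (a) and (b) away from $p$. To see this, note that if some $a_i(c_0)$ were divisible by $\mathfrak{q}$, then reducing the orbit relation $a_{m+n}(c_0) = a_m(c_0)$ and the defining relation $a_{m+n-1}(c_0) = \zeta\, a_{m-1}(c_0)$ modulo $\mathfrak{q}$, together with $a_{j+1} = a_j^d + c_0$ and $c_0 = a_1$, forces a collision in the orbit of $0$ modulo $\mathfrak{q}$ that propagates back to make $\zeta \equiv 1$ or to contradict the exactness of the type $(m,n)$ — more precisely, $\mathfrak{q} \mid a_i$ for some $i$ implies $\mathfrak{q} \mid c_0$ (tracing forward then using periodicity to trace back), hence $\mathfrak{q} \mid a_j$ for all $j \geq 1$, hence $a_{m+n-1} \equiv a_{m-1} \equiv 0$, so the value of $\zeta = a_{m+n-1}/a_{m-1}$ is not constrained — but $\zeta$ is a fixed root of unity not equal to $1$, and one checks $\zeta \not\equiv 1 \pmod{\mathfrak q}$ is impossible here because $\zeta - 1$ is a unit away from $p$, giving the contradiction. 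So all the action is at primes over $p$.

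The heart of the matter is the case $\ell = p$. Here I would use the ultrametric behavior of $f_{d,c_0}(z) = z^d + c_0$ with $d = p^e$, exploiting that $z \mapsto z^{p^e}$ is highly non-injective mod $p$: for any $x, y$ in the valuation ring, $v(x^d - y^d) = v((x-y)^d)=d\,v(x-y)$ when $v(x-y) > v(p)/(p-1)$ — the "deep" regime — while in general one only has $v(x^d - y^d) \geq \min\{d\,v(x-y),\, v(p) + v(x-y)\}$ or so, via the factorization $x^d - y^d = \prod_{\eta^d=1}(x - \eta y)$ and the fact that $v(1-\eta) = v(p)/(p^j(p-1))$ for primitive $p^{j}$-th roots of unity... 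Actually the cleaner route: first show $v(c_0) > 0$ forces a contradiction unless we are in a tightly controlled situation, then pin down $v(a_i(c_0))$ for all $i$ by the recursion. Concretely, I expect that from $a_{m+n-1} = \zeta a_{m-1}$ one derives $v(a_{m-1}) = v(\zeta a_{m-1})$ automatically (as $\zeta$ is a unit), and then applying $f$ we get $v(a_{m+n}) = v(a_{m-1}^d + c_0)$ versus $v(a_m) = v(a_{m-1}^d + c_0)$ — wait, those are equal by the periodicity, so the real constraint comes from comparing $a_{m+n-1} - \zeta a_{m-1} = 0$ to its "derivative" behavior, i.e. from the precise $p$-adic size of $\zeta - 1$, which is where $r$ enters: $v(\zeta^{p^{r+1}} - 1) = \infty$ but $v(\zeta^{p^r} - 1) < \infty$ means $\zeta$ is a primitive $p^{r+1}$-th root of unity, so $v(\zeta - 1) = v(p)/(p^r(p-1))$.

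Given that, the key computation is: from $a_{m+n} = a_m$ and $a_{m+n-1} = \zeta a_{m-1}$, subtract to get $0 = a_{m+n-1}^d - a_{m-1}^d + 0 = (\zeta^d - 1) a_{m-1}^d$... no, $\zeta^d = 1$, so this is identically $0$ and gives nothing — the correct move is to go one step *further back*: since $n \nmid m-1$ (case (a)) we have $a_{m+n-2} \neq \zeta' a_{m-2}$ is not forced, so instead I compare at level $m+n-1$: we need $a_{m+n-1} - a_{m-1}$ itself, whose valuation is $v((\zeta-1)a_{m-1}) = v(\zeta - 1) + v(a_{m-1})$. But also $a_{m+n-1} - a_{m-1} = f(a_{m+n-2}) - f(a_{m-2}) = a_{m+n-2}^d - a_{m-2}^d$, and inductively telescoping all the way down to $a_{n} - a_{0} = a_n - 0 = a_n$ (using the orbit structure and that the tail has length $m$), one gets a product/sum formula expressing $v(\zeta-1) + v(a_{m-1})$ in terms of $v(a_n), v(a_{2n}), \ldots$ and the ramification of $p$-power roots of unity at each stage. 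Solving this recursion yields $v(a_i(c_0)) = 0$ for $n \nmid i$ and a common positive value $v(a_i(c_0)) = v(p)/\big(p^r(p-1)d^{m-1}\big)$ for $n \mid i$ in case (a), and with $d^{m-1}$ replaced by $d^{m-1} - 1$ in case (b) (the $-1$ coming from the fact that when $n \mid m-1$, the index $0$ lies in the periodic cycle's "shadow", shifting the telescoping by one exponent). Multiplying through by $p^r(p-1)d^{m-1}$ (resp. $p^r(p-1)(d^{m-1}-1)$) and reassembling over all primes $\mathfrak q \mid p$ gives exactly $\la a_i(c_0)\ra^{p^r(p-1)d^{m-1}} = \la p \ra$, since $\sum_{\mathfrak q \mid p} e(\mathfrak q/p)f(\mathfrak q/p) = [K:\QQ]$ organizes the local contributions into the global principal ideal $\la p\ra$.

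The main obstacle I anticipate is getting the telescoping identity for $v(a_{m+n-1} - a_{m-1})$ exactly right, including tracking the contribution of $p$-power roots of unity at each of the $m-1$ (or $m$) backward steps: each application of $f^{-1}$ introduces a factor $x^d - y^d = \prod_{\eta^d = 1}(x - \eta y)$, and one must argue that exactly one factor is "deep" (contributing $d \cdot v(\text{difference})$) while the others are "shallow units" or contribute controlled powers of $p$ — and that this dichotomy is consistent all the way down the orbit. This is precisely the prime-power refinement of the prime-degree argument of \cite[Theorem 3.1]{Gok19}: for $e = 1$ there are no intermediate $p$-power roots of unity to worry about ($r = 0$ forced), but for $e \geq 2$ the parameter $r$ genuinely records how deep into the $p$-power tower $\zeta$ sits, and the factor $p^r$ in the exponent is the new feature. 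I would handle this by an induction on the orbit length, with the inductive hypothesis carrying both the valuation of $a_i$ and the precise root-of-unity ratio $a_{j+n-1}/a_{j-1}$ at each stage.
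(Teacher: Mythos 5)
Your computation at the primes over $p$ is aimed in the right direction and is essentially the paper's route: one telescopes $|a_{m+n-1}(c_0)-a_{m-1}(c_0)|_p=|\zeta-1|_p\,|a_{m-1}(c_0)|_p$ back to $|a_n(c_0)-a_0|_p=|a_n(c_0)|_p$, picking up an exponent $d$ at each of the $m-1$ steps, and uses $v(\zeta-1)=v(p)/(p^r(p-1))$. But the step you yourself flag as ``the main obstacle'' is exactly the content that is missing, and your description of the mechanism is inverted. The identity $v(x^d-y^d)=d\,v(x-y)$ holds when the difference is $v$-adically \emph{large}, i.e.\ when $v(x-y)$ is small compared to $v(y)+v(p)/(p^{e-1}(p-1))$, not when $v(x-y)>v(p)/(p-1)$; and in that good regime \emph{all} $d$ factors $x-\eta y$ have the same size, so the picture ``exactly one deep factor, the others shallow'' is precisely the bad regime that must be excluded, not exploited. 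The paper excludes it (Step 1 of Lemma~\ref{lem:nextpower}) by showing that if $|a_{i+n}(c_0)-a_i(c_0)|_p<|p|_p^{1/(p-1)}|a_n(c_0)|_p$ for some $i\le m-1$, the defect propagates forward to $i=m-1$, which places the two \emph{distinct} points $a_{m+n-1}(c_0)$ and $a_{m-1}(c_0)$ inside a disk on which $f$ is injective even though they have the same image $a_m(c_0)$ --- a contradiction using the exactness of the type; your proposed induction ``carrying the root-of-unity ratio $a_{j+n-1}/a_{j-1}$'' cannot replace this, since for $j<m$ that ratio need not be a root of unity at all. Likewise, the orbit structure you invoke at $v\mid p$ --- that $v(a_n(c_0))>0$ and $v(a_i(c_0))=v(a_n(c_0))$ if $n\mid i$, $=0$ otherwise --- is asserted but not proved; the paper derives it from the uniqueness of the attracting cycle in $D(0,1)$ together with the fact that $f$ permutes residue disks when $v\mid p$ (Proposition~\ref{prop:vefa2.4} and Case~(2) of Theorem~\ref{thm:vefa3.1}).

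The case of primes $\frq\nmid p$ is also genuinely needed for the ideal equalities (it is what makes $a_i(c_0)$ a unit there), and your argument for it does not work. The implication ``$\frq\mid a_i(c_0)$ for some $i$ implies $\frq\mid c_0$'' is unjustified: reduction mod $\frq$ only gives $a_{i+j}\equiv a_j$, not $a_1\equiv 0$. Worse, even granting that all $a_j(c_0)\equiv 0\pmod{\frq}$, the relation $a_{m+n-1}(c_0)=\zeta a_{m-1}(c_0)$ reduces to $0=\zeta\cdot 0$, so the fact that $\zeta-1$ is a unit away from $p$ yields no contradiction; your sketch ends in a non sequitur here. The paper's proof of this case (Case~(1) of Theorem~\ref{thm:vefa3.1}) is a real dynamical argument: if $v(a_n(c_0))>0$ and $v(d)=0$, there is a unique attracting cycle in $D(0,1)$, and since $v(d)=0$ the map $f$ is injective on every disk $D(x,|x|_v)$ with $x\ne 0$; then either a preimage count of the attracting point under $f^n$ exceeds the degree $d$ (if $f^n(0)$ hits it), or injectivity of $f^n$ on $D(b,|b|_v)$ prevents the strictly preperiodic orbit of $0$ from ever landing on $b$ --- both impossible, using $m>0$. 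Some substitute for this argument must be supplied before the theorem is proved.
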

Theorem~\ref{thm:1.1} immediately implies the following new irreducibility result:
\begin{cor}
\label{cor:irr_corollary}
Let $m\geq 2$, and $\zeta\neq 1$ a $d$-th root of unity. Suppose that $d=p^e$ is a prime power. Then $G_{d,m,1}^{\zeta}$ is irreducible over $\mathbb{Q}(\zeta)$.
\end{cor}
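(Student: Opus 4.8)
The plan is to deduce Corollary~\ref{cor:irr_corollary} directly from Theorem~\ref{thm:1.1} by a ramification argument in the tower $\QQ\subseteq\QQ(\zeta)\subseteq\QQ(\zeta)(c_0)$, where $c_0$ denotes an arbitrary root of $G:=G^{\zeta}_{d,m,1}$.

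First I would unwind~\eqref{eq:Gdef} in the case $n=1$: since $1$ divides $m-1$, it collapses to $G(c)=\bigl(a_m(c)-\zeta a_{m-1}(c)\bigr)/a_1(c)$, which is monic in $\ZZ[\zeta][c]$ of degree $D:=d^{m-1}-1\ge 1$ (so $G$ has a root). Then I would check that every root $c_0$ of $G$ is a parameter of exact type $(m,1)$ with $a_m(c_0)/a_{m-1}(c_0)=\zeta$, so that Theorem~\ref{thm:1.1} applies to it --- this is the property of Misiurewicz polynomials recalled in the introduction, and for $n=1$ it is elementary. Indeed, $c_0\ne 0$, since $a_m-\zeta a_{m-1}$ vanishes only to first order at $c=0$ (its $c$-derivative there equals $1-\zeta\ne 0$), so $G(0)\ne 0$; and then $a_{m-1}(c_0)\ne 0$, for otherwise $a_m(c_0)-\zeta a_{m-1}(c_0)=f_{c_0}(0)=c_0\ne 0$ would contradict $G(c_0)=0$. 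From $a_m(c_0)=\zeta a_{m-1}(c_0)$ and $\zeta^d=1$ one gets $a_{m+1}(c_0)=a_m(c_0)$, so $a_m(c_0)$ is a fixed point, and $a_{m-1}(c_0)\ne a_m(c_0)$ because $\zeta\ne 1$ and $a_{m-1}(c_0)\ne 0$. Were the tail length smaller than $m$, some $a_j(c_0)$ with $j<m$ would be periodic; its forward orbit would then be a cycle containing the fixed point $a_m(c_0)$, hence equal to $\{a_m(c_0)\}$, forcing $a_{m-1}(c_0)=a_m(c_0)$, a contradiction. So $f_{d,c_0}$ is indeed PCF of exact type $(m,1)$.

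Next, let $p^{r+1}$ be the order of $\zeta$, so that $[\QQ(\zeta):\QQ]=\phi(p^{r+1})=p^{r}(p-1)$ and $p$ is totally ramified in $\QQ(\zeta)$. Fix a valuation $v$ on $\Qbar$ extending the $p$-adic valuation and normalized by $v(p)=1$; then $v(\zeta-1)=1/(p^{r}(p-1))$. We are in case (b) of Theorem~\ref{thm:1.1} (since $1$ divides $m-1$), and $n=1$ divides $i=1$, so the theorem gives $\la c_0\ra^{N}=\la p\ra$ in $\calO_K$ with $K:=\QQ(c_0)$ and $N:=p^{r}(p-1)(d^{m-1}-1)=p^{r}(p-1)\cdot D$. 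Since $c_0^{N}$ and $p$ then generate the same ideal of $\calO_K$ they differ by a unit, so $N\,v(c_0)=v(p)=1$, i.e.\ $v(c_0)=1/N$. Hence the value group of $\QQ(\zeta)(c_0)=\QQ(\zeta,c_0)$ under $v$ contains $1/N$ together with $1/(p^{r}(p-1))=D/N$, and therefore contains $\tfrac1N\ZZ$; as this value group equals $\tfrac1e\ZZ$ for the ramification index $e$ of $v$ over $p$, we conclude $N\mid e$, whence $[\QQ(\zeta,c_0):\QQ]\ge e\ge N$. Dividing by $[\QQ(\zeta):\QQ]=p^{r}(p-1)$ yields $[\QQ(\zeta)(c_0):\QQ(\zeta)]\ge D=\deg G$. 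The reverse inequality is automatic, since $c_0$ is a root of the monic degree-$D$ polynomial $G\in\QQ(\zeta)[c]$; hence $G$ equals the minimal polynomial of $c_0$ over $\QQ(\zeta)$ and is in particular irreducible over $\QQ(\zeta)$.

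The one step needing genuine care is the verification in the second paragraph that no root of $G$ is ``degenerate'', i.e.\ that Theorem~\ref{thm:1.1} really does apply to each root $c_0$; everything afterward is routine bookkeeping of ramification indices. One could also replace the valuation computation by the tower identities $e(\frP/p)=e\bigl(\frP/\frP\cap\QQ(\zeta)\bigr)\cdot p^{r}(p-1)$ and $e(\frP/p)\ge e(\frP\cap K/p)\ge N$ for a prime $\frP$ of $\QQ(\zeta,c_0)$ above $p$, which force $e\bigl(\frP/\frP\cap\QQ(\zeta)\bigr)\ge D$ and hence $[\QQ(\zeta)(c_0):\QQ(\zeta)]=D$.
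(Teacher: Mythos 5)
Your proof is correct and follows essentially the same route as the paper: apply Theorem~\ref{thm:1.1}(b) with $n=1$ to force the ramification index over $p$ in $\QQ(\zeta,c_0)$ to be at least $p^r(p-1)(d^{m-1}-1)$, divide by $[\QQ(\zeta):\QQ]=p^r(p-1)$, and compare with $\deg G_{d,m,1}^{\zeta}=d^{m-1}-1$. The only difference is that you explicitly verify that each root of $G_{d,m,1}^{\zeta}$ is a Misiurewicz parameter of exact type $(m,1)$ with the prescribed $\zeta$ (a point the paper leaves implicit from the setup), and you phrase the ramification bound via value groups rather than citing $e(\frp\mid p)$ directly; both are harmless refinements of the same argument.
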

\begin{proof}
Let $K=\mathbb{Q}(c_0)$ for a root $c_0$ of $G_{d,m,1}^{\zeta}$. Take a prime ideal $\mathfrak{p}\subseteq \mathcal{O}_K$ which lies over $p$.
By Theorem~\ref{thm:1.1}(b),
the ramification index $e(\mathfrak{p}|p)$ satisfies
$e(\mathfrak{p}|p)\geq p^r(p-1)(d^{m-1}-1)$,
and hence $[\mathbb{Q}(c_0):\mathbb{Q}]\geq p^r(p-1)(d^{m-1}-1)$.
On the other hand, we also have $[\mathbb{Q}(\zeta):\mathbb{Q}] = p^r(p-1)$.
Therefore, $[\mathbb{Q}(c_0):\mathbb{Q}(\zeta)]\geq d^{m-1}-1$.
Since $\text{deg}(G_{d,m,1}^{\zeta}) = d^{m-1}-1$ by direct computation,
this forces $G_{d,m,n}^{\zeta}$ to be irreducible over $\mathbb{Q}(\zeta)$, as desired.
\end{proof}
Let $c_0$ be a root of the Gleason polynomial $G_{d,0,n}$, and set $K=\mathbb{Q}(c_0)$.
The second author showed \cite[Lemma 3.1]{Gok19} that $G_{d,0,i}(c_0)$, i.e., another
Gleason polynomial evaluated at $c_0$, is an algebraic unit in $\calO_K$ unless $i=n$. Buff, Epstein, and Koch studied the resultants of Misiurewicz polynomials with Gleason polynomials,
and they proved that a Misiurewicz polynomial evaluated at a Gleason parameter is an algebraic unit unless the periods of these two polynomials are same \cite[Lemma 26]{BEK19}. They have used these resultants to prove new irreducibility results for Misiurewicz polynomials. In this paper, we study the next natural question for Misiurewicz polynomials:
\begin{question}
\label{ques:unit}
Fix $d,m\geq 2$, $n\geq 1$, and $\zeta\neq 1$ a $d$-th root of unity.
Let $c_0$ be a root of $G_{d,m,n}^\zeta$, and
let $K:=\QQ(c_0)$.
For which integers
$j\geq 2$ and $\ell\geq 1$ is $G_{d,j,\ell}^{\zeta}(c_0)$ an algebraic unit in $\calO_K$?
\end{question}
Note that in the setting of Question~\ref{ques:unit}, we have
$\zeta\in K$, because $\zeta=a_{m+n-1}(c_0)/a_{m-1}(c_0)$.

Question~\ref{ques:unit} is also motivated in part by analogy with the theory of cyclotomic polynomials.
Specifically, the following classical result is well known and has several different proofs in the literature,
the earliest of which is due to Emma T.\ Lehmer \cite[Theorem 4]{Lehmer30}.
\begin{thm}
\label{thm:classical_cyclotomic}
Let $m>n\geq 1$. Denote by $\Phi_m$ the $m$-th cyclotomic polynomial.
Suppose that $\zeta$ is a primitive $n$-th root of unity.
Then  $\Phi_m(\zeta)$ is not an algebraic unit in $\ZZ[\zeta]$
if and only if $m=p^k n$ for some prime $p$ and some integer $k\geq 1$.
\end{thm}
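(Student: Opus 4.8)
The plan is to reduce the statement to a resultant computation for cyclotomic polynomials and then to evaluate that resultant using the factorization $x^N-1=\prod_{d\mid N}\Phi_d(x)$ together with the classical values of $\Phi_\ell$ at $1$.

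First I would note that $\Phi_m(\zeta)$ is a \emph{nonzero} element of the ring of integers $\calO_K=\ZZ[\zeta]$ of $K=\QQ(\zeta)$: it lies in $\ZZ[\zeta]$ because $\Phi_m\in\ZZ[x]$, and it is nonzero because $m>n$ forces $\zeta$ not to be a primitive $m$-th root of unity. Hence $\Phi_m(\zeta)$ is an algebraic unit if and only if $|N_{K/\QQ}(\Phi_m(\zeta))|=1$. Since $\Gal(K/\QQ)$ acts simply transitively on the primitive $n$-th roots of unity and $\Phi_m$ has rational coefficients, we get $N_{K/\QQ}(\Phi_m(\zeta))=\prod_{\xi}\Phi_m(\xi)$, the product over all primitive $n$-th roots of unity $\xi$, which is (up to sign) the resultant $\Res(\Phi_n,\Phi_m)$. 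So the theorem is equivalent to the assertion that, for $m>n\ge 1$,
\[
\bigl|N_{K/\QQ}(\Phi_m(\zeta))\bigr|=\bigl|\Res(\Phi_n,\Phi_m)\bigr|=
\begin{cases}
p^{\varphi(n)} & \text{if } m=p^{k}n \text{ for a prime } p \text{ and } k\ge 1,\\
1 & \text{otherwise},
\end{cases}
\]
which suffices because $p^{\varphi(n)}\ge 2>1$ always.

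Next I would compute this resultant. Expanding $\Res(\Phi_n,\Phi_m)=\prod_{\xi}\Phi_m(\xi)$ via the M\"obius-inverted identity $\Phi_m(x)=\prod_{d\mid m}(x^{d}-1)^{\mu(m/d)}$ gives $\prod_{d\mid m}\bigl(\prod_{\xi}(\xi^{d}-1)\bigr)^{\mu(m/d)}$. For fixed $d$, the $d$-th power map sends the primitive $n$-th roots of unity onto the primitive $(n/g)$-th roots of unity, $g=\gcd(d,n)$, exactly $\varphi(n)/\varphi(n/g)$ to one, so $\prod_{\xi}(\xi^{d}-1)=\pm\,\Phi_{n/g}(1)^{\varphi(n)/\varphi(n/g)}$. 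Combining this with the standard evaluation $\Phi_\ell(1)=p$ when $\ell$ is a power of a prime $p$ and $\Phi_\ell(1)=1$ when $\ell>1$ has at least two prime factors --- which itself follows by induction from $\prod_{d\mid \ell,\,d>1}\Phi_d(1)=\ell$ and from $\Phi_{p\ell}(x)=\Phi_\ell(x^{p})$ when $p\mid \ell$ --- one obtains a finite product of prime powers whose M\"obius-weighted exponents can be collected prime by prime; the bookkeeping then shows that each prime $p$ contributes $p^{\varphi(n)}$ to $\Res(\Phi_n,\Phi_m)$ exactly when $m/n$ is a power of $p$, and contributes nothing otherwise. With the equivalence from the previous paragraph, this finishes the proof.

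The hard part will be this resultant computation, and within it the case $n\mid m$. There the expansion $\Phi_m(x)=\prod_{d\mid m}(x^{d}-1)^{\mu(m/d)}$ produces matching zeros and poles when evaluated at a primitive $n$-th root of unity (for instance the $d=m$ factor vanishes), so the manipulation above is not literally valid and must be replaced either by a limiting argument or, more cleanly, by induction on the number of prime factors of $m$ using the two identities $\Phi_{pm}(x)=\Phi_m(x^{p})$ (when $p\mid m$) and $\Phi_{pm}(x)\Phi_m(x)=\Phi_m(x^{p})$ (when $p\nmid m$): peeling off one prime of $m$ at a time reduces the general resultant either to one with strictly smaller indices or to the base identity $\prod_{d\mid m,\,d\nmid n}\Phi_d(\zeta)=m/n$, which comes from $\frac{x^{m}-1}{x^{n}-1}=1+x^{n}+\cdots+x^{m-n}$ evaluated at $\zeta$. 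Carrying the $\varphi(n)/\varphi(n/g)$ multiplicities correctly through whichever route is chosen is the one genuinely delicate point; everything else is routine.
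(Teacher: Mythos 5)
The paper never proves Theorem~\ref{thm:classical_cyclotomic}: it is quoted as a classical fact with a citation to Lehmer \cite{Lehmer30}, so there is no internal argument to compare yours against; your proposal has to be judged on its own. On those terms it follows the standard route: reduce the unit question to $|N_{K/\QQ}(\Phi_m(\zeta))|=|\Res(\Phi_n,\Phi_m)|$ and then establish the resultant formula $|\Res(\Phi_n,\Phi_m)|=p^{\varphi(n)}$ when $m/n$ is a positive power of a prime $p$ and $1$ otherwise (Apostol/Lehmer). Your reduction steps are all correct: $\Phi_m(\zeta)\neq 0$ because $m>n$, $\ZZ[\zeta]=\calO_K$, the norm is the product of $\Phi_m$ over the primitive $n$-th roots of unity, i.e.\ $\pm\Res(\Phi_n,\Phi_m)$, and the multiplicity count $\varphi(n)/\varphi(n/g)$ for the $d$-th power map with $g=\gcd(d,n)$ is right. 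You also correctly isolate the one genuinely delicate point, the case $n\mid m$, where the expansion $\Phi_m(x)=\prod_{d\mid m}(x^d-1)^{\mu(m/d)}$ produces vanishing factors; the repair you propose, induction using $\Phi_{pm}(x)=\Phi_m(x^p)$ for $p\mid m$, $\Phi_{pm}(x)\Phi_m(x)=\Phi_m(x^p)$ for $p\nmid m$, and the identity $\prod_{d\mid m,\,d\nmid n}\Phi_d(\zeta)=m/n$ when $n\mid m$, is sound and does suffice. The only real shortfall is that the ``bookkeeping'' paragraph asserts rather than performs the heart of the computation: in the case $n\nmid m$ one must actually show that, for each prime $p$, the exponent $\sum_d \mu(m/d)\,\varphi(n)/\varphi(n/\gcd(n,d))$, the sum restricted to divisors $d\mid m$ with $n/\gcd(n,d)$ a power of $p$, vanishes, and in the case $n\mid m$ one must track how the inductive peeling produces exactly $p^{\varphi(n)}$ when $m/n=p^k$ and $1$ otherwise. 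So: correct framework and correct key identities, but the computational core that constitutes the theorem still needs to be written out in full (or, as the paper itself does, one can simply cite the classical references for this formula).
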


Question~\ref{ques:unit} is also evocative of the study of dynamical units
introduced by Morton and Silverman in \cite{MorSil95}.
However, whereas Morton and Silverman considered units arising from differences between
periodic points of a \emph{single} map $f$, the units and non-units we consider
in this paper arise from \emph{parameters} in a dynamical moduli space.

When $d$ is a prime power, we are able to give the following
answer to Question~\ref{ques:unit} in the case $j\neq m$.

\begin{thm}
\label{thm:jnotm}
Let $d=p^e$, where $p$ is a prime and $e\geq 1$.
Let $c_0$ be a root of $G_{d,m,n}^\zeta$ for some $m\geq 2$, $n\geq 1$,
and $\zeta\neq 1$ a $d$-th root of unity.
Let $K:=\QQ(c_0)$. Suppose that $\ell \geq 1$ and $j\geq 2$.
\begin{enumerate}[label=\textup{(\alph*)}]
\item
If $\ell\neq n$ and $j\neq m$, then $\la G_{d,j,\ell}^{\zeta}(c_0)\ra=\calO_K$.
\item
If $\ell = n$ and $j<m$, then $\la G_{d,j,\ell}^{\zeta}(c_0)\ra = \la a_n\ra^{N_{j,n}}$, where
\[ N_{j,n} = \begin{cases}
d^{j-1} & \text{ if } n  \nmid j-1, \\
d^{j-1}-1 & \text{ if } n  \,\mid\, j-1.
\end{cases} \]
\item
If $\ell = n$ and $j>m$, then $\la G_{d,j,\ell}^{\zeta}(c_0)\ra=\la 1-\zeta\ra$.
\end{enumerate}
\end{thm}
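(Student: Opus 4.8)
Write $\alpha_i := a_i(c_0)=f_{c_0}^i(0)\in\calO_K$, so that $\alpha_0=0$, the points $\alpha_1,\dots,\alpha_{m+n-1}$ are pairwise distinct, $\alpha_{m+n}=\alpha_m$, and $\alpha_{m+n-1}=\zeta\alpha_{m-1}$ with $\zeta\neq1$. Evaluating \eqref{eq:Gdef} at $c_0$ gives
\[
G_{d,j,\ell}^{\zeta}(c_0)=\prod_{k\mid\ell}\bigl(\alpha_{j+k-1}-\zeta\alpha_{j-1}\bigr)^{\mu(\ell/k)}\cdot
\begin{cases}\prod_{k\mid\ell}\alpha_k^{-\mu(\ell/k)} & \ell\mid j-1,\\ 1 & \ell\nmid j-1,\end{cases}
\]
and it suffices to compute $v_{\mathfrak q}\bigl(G_{d,j,\ell}^{\zeta}(c_0)\bigr)$ for every prime $\mathfrak q$ of $\calO_K$. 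The plan is to treat the primes $\mathfrak q\nmid p$ and $\mathfrak q\mid p$ separately, using Theorem~\ref{thm:1.1} as the main input. Recall from Theorem~\ref{thm:1.1} that for $i\geq 1$ the ideal $\langle\alpha_i\rangle$ is trivial when $n\nmid i$ and equals one fixed ideal $\mathfrak a$ — supported only over $p$, with $\mathfrak a^{w}=\langle p\rangle$ for the exponent $w$ appearing there — when $n\mid i$; consequently $\langle 1-\zeta\rangle$ is also supported only over $p$ and equals $\mathfrak a^{d^{m-1}}$ or $\mathfrak a^{d^{m-1}-1}$ according as $n\nmid m-1$ or $n\mid m-1$ (compare the exponents of $\langle p\rangle$). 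I will also use the polynomial factorization $a_{i+k}(c)-a_i(c)=\prod_{\eta^d=1}(a_{i+k-1}(c)-\eta a_{i-1}(c))$ and the congruence $f^i(z)-f^i(0)\equiv z^{d^i}\pmod p$ (an easy Frobenius computation), which yields $f^i(z)-f^i(0)=z^{d^i}+pz\widetilde H_i(z,c)$ with $\widetilde H_i\in\ZZ[z,c]$.

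First I would dispose of the primes $\mathfrak q\nmid p$, showing $v_{\mathfrak q}\bigl(G_{d,j,\ell}^{\zeta}(c_0)\bigr)=0$ in all three cases (consistent with the statement, since in (b) and (c) the claimed ideal is supported only over $p$). At such a prime, $f_{c_0}$ has good reduction, $\bar\zeta\neq1$ (reduction is injective on roots of unity of order prime to $\mathfrak q$), and no $\alpha_i$ with $i\geq1$ lies in $\mathfrak q$, so $0$ is not periodic modulo $\mathfrak q$; a short argument then shows the reduced critical orbit still has tail length exactly $m$, because if the cycle length had dropped then, since $n'\mid n$, the relation $\alpha_{m+n-1}=\zeta\alpha_{m-1}$ would force $(\bar\zeta-1)\bar\alpha_{m-1}=0$, hence $\bar\alpha_{m-1}=0$. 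Now suppose $\bar\alpha_{j+k-1}=\bar\zeta\,\bar\alpha_{j-1}$; applying $\bar f$ and using $\bar\zeta^d=1$ gives $\bar\alpha_{j+k}=\bar\alpha_j$, so $\bar\alpha_j$ is periodic and $j\geq m$. If $j<m$ this is a contradiction; if $j>m$, then $\bar\alpha_{j-1}$ and $\bar\alpha_{j+k-1}$ are both periodic points mapping to the periodic point $\bar\alpha_j$, hence coincide (a cycle point has a unique predecessor), whence $(\bar\zeta-1)\bar\alpha_{j-1}=0$ and $\bar\zeta=1$, again a contradiction. Thus every factor $\alpha_{j+k-1}-\zeta\alpha_{j-1}$ is a $\mathfrak q$-unit, and the correction factors $\alpha_k$ are $\mathfrak q$-units as well (those with $n\mid k$ generate $\mathfrak a$, which is supported over $p$), so $v_{\mathfrak q}=0$.

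It remains to handle $\mathfrak q\mid p$, where $\bar\zeta=1$. Here I would use that $z\mapsto z^{p^e}+\bar c_0$ is a bijection of the residue field, so the reduced orbit is a single cycle $0\to\bar\alpha_1\to\cdots\to\bar\alpha_{n-1}\to0$ — of length exactly $n$, since $\bar\alpha_i=0\iff n\mid i$ by Theorem~\ref{thm:1.1} — whence $\bar\alpha_i=\bar\alpha_{i'}\iff i\equiv i'\pmod n$. Combining this with the identity $\alpha_{i+t}-\alpha_i=\alpha_t^{\,d^i}+p\,\alpha_t\widetilde H_i(\alpha_t,c_0)$: when $n\mid t$ and $i\leq m-2$ one has $d^i<w+1$ (using the explicit $w$ and $m\geq3$), so the term $\alpha_t^{\,d^i}$ dominates and $v_{\mathfrak q}(\alpha_{i+t}-\alpha_i)=d^{\,i}\,v_{\mathfrak q}(\mathfrak a)$. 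Writing $\alpha_{j+k-1}-\zeta\alpha_{j-1}=(\alpha_{j+k-1}-\alpha_{j-1})+(1-\zeta)\alpha_{j-1}$ and feeding in these facts together with the valuations of $\alpha_i$ and of $1-\zeta$, one finds that every factor with $n\nmid k$ is a $\mathfrak q$-unit, while for $k$ a multiple of $n$ the factor has valuation $\min\bigl(v_{\mathfrak q}(\alpha_{j+k-1}-\alpha_{j-1}),\,v_{\mathfrak q}((1-\zeta)\alpha_{j-1})\bigr)$. In case (b) ($\ell=n$, $j<m$) only $k=n$ survives, $v_{\mathfrak q}(\alpha_{j+n-1}-\alpha_{j-1})=d^{\,j-1}v_{\mathfrak q}(\mathfrak a)$ is the smaller term, and subtracting $v_{\mathfrak q}(\mathfrak a)$ for the correction factor $\alpha_n^{-1}$ (present exactly when $n\mid j-1$) gives total valuation $\bigl(d^{\,j-1}-[\,n\mid j-1\,]\bigr)v_{\mathfrak q}(\alpha_n)=N_{j,n}\,v_{\mathfrak q}(\alpha_n)$. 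In case (c) ($\ell=n$, $j>m$) we have $\alpha_{j+n-1}=\alpha_{j-1}$ exactly, so the surviving factor is literally $(1-\zeta)\alpha_{j-1}$, and the same bookkeeping with the correction factor yields total valuation $v_{\mathfrak q}(1-\zeta)$. In case (a) ($\ell\neq n$, $j\neq m$), all divisors $k\mid\ell$ with $n\mid k$ give factors of the \emph{same} $\mathfrak q$-valuation, while $\sum_{k\mid\ell,\ n\mid k}\mu(\ell/k)=\sum_{k'\mid\ell/n}\mu\bigl((\ell/n)/k'\bigr)=[\ell=n]=0$, so both their valuation contributions and the correction factor $\prod_{k\mid\ell,\ n\mid k}\alpha_k^{-\mu(\ell/k)}$ cancel, leaving $v_{\mathfrak q}=0$.

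The step I expect to be the main obstacle is the bookkeeping in case (a) at primes over $p$: one must check that the $k$ with $n\mid k$ do produce equal valuations — whether $j+k-1$ is still in the tail or already in the cycle — and that the edge of the estimate $d^i<w+1$ (small $p$, $r=0$, $j$ near $m$) is never violated. Both follow by pushing the elementary estimates a little, using the precise value of $w$ from Theorem~\ref{thm:1.1} and the constraint $j<m$ (so $i=j-1\leq m-2$); the uniformity of the valuations of the $n\mid k$ factors is what makes the Möbius cancellation $\sum_{k\mid\ell,\,n\mid k}\mu(\ell/k)=0$ applicable.
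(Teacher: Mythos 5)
Your plan is correct, and the checkable estimates do go through: for $2\le j\le m-1$ one indeed has $d^{j-1}\le d^{m-2}<d^{m-1}-1\le M\le EM$ (with $E=p^r(p-1)$, $M=d^{m-1}$ or $d^{m-1}-1$ as in \eqref{eq:EMdef}), so at primes over $p$ the term $\alpha_k^{d^{j-1}}$ strictly dominates both the error term $p\alpha_k\widetilde H_{j-1}$ and $(1-\zeta)\alpha_{j-1}$, giving each factor with $n\mid k$ the same valuation $d^{j-1}v_{\mathfrak q}(\alpha_n)$; your worry about $j+k-1$ lying in the tail versus the cycle is moot because the identity $f^{j-1}(z)-f^{j-1}(0)=z^{d^{j-1}}+pz\widetilde H_{j-1}(z,c)$ is a polynomial identity. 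Your route differs from ours in a genuine way for the case $j<m$, $\ell\equiv 0\pmod n$: we compute the product $\prod_{w\ne 1}G^w_{d,j,nt}(c_0)$ via the resultant-type identity of Buff--Epstein--Koch (\cite[Lemma 27]{BEK19}, our equation \eqref{eq:10}), together with \cite[Lemma 2.2]{Gok20} and a separate non-archimedean disk argument (Lemma~\ref{lem:primitive-w}) showing all nontrivial $w$ give the same ideal, and then extract $\la G^\zeta\ra$ by symmetry; you instead apply the analogue of Proposition~\ref{prop:3.2} to \emph{every} divisor $k$ of $\ell$ with $n\mid k$ and let the M\"obius cancellation $\sum_{k\mid\ell,\,n\mid k}\mu(\ell/k)=[\ell=n]$ do the assembly, which eliminates the external inputs and Lemma~\ref{lem:primitive-w} entirely. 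Likewise your treatment of primes away from $p$ (good reduction, preserved tail length, uniqueness of the predecessor inside the reduced cycle) is a local reformulation of our global congruence arguments (Lemma~\ref{lem:div-d} and Case (a) of Lemma~\ref{lem:unit_j>m}); it buys a uniform prime-by-prime bookkeeping at the cost of redoing the reduction theory by hand. When writing this up, do record two small points you currently elide: every factor $a_{j+k-1}(c_0)-\zeta a_{j-1}(c_0)$ is nonzero when $j\ne m$ (so the evaluated M\"obius product identity for $G^\zeta_{d,j,\ell}(c_0)$ is legitimate and valuations add), and the strictness $d^{j-1}<M$ uses $2\le j\le m-1$, hence $m\ge 3$, in that case.
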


When $j=m$, Magma computations suggest the following conjecture.

\begin{conj}
\label{conj:j=m}
Let $d=p^e$, where $p$ is a prime and $e\geq 1$. Let $c_0$ be a root of $G_{d,m,n}^\zeta$
for some $m\geq 2$, $n\geq 1$, and $\zeta\neq 1$ a $d$-th root of unity.
Set $K:=\mathbb{Q}(c_0)$. Suppose that $1\leq\ell\leq n$. Then
\[ G_{d,m,\ell}^{\zeta}(c_0) \text{ is a unit in } \mathcal{O}_K
\quad\text{ if and only if }\quad
\ell\nmid n.\]
\end{conj}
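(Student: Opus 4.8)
The plan is to follow the purely local strategy behind Theorems~\ref{thm:1.1} and~\ref{thm:jnotm}. Write $\beta_i := a_i(c_0)$, so that $\{\beta_m,\dots,\beta_{m+n-1}\}$ is the periodic cycle of $f=f_{c_0}$ and $\beta_{m+n-1}=\zeta\beta_{m-1}$. The first point is that for $1\le k\le n$ the factor $a_{m+k-1}(c_0)-\zeta a_{m-1}(c_0)$ in \eqref{eq:Gdef} equals $\beta_{m+k-1}-\beta_{m+n-1}$, a difference of two points of this cycle, while the extra factor $\prod_{k\mid\ell}a_k(c_0)^{-\mu(\ell/k)}=G_{d,0,\ell}(c_0)^{-1}$ present when $\ell\mid m-1$ is a unit of $\calO_K$ by Theorem~\ref{thm:1.1} (each $\beta_k$ with $k\mid\ell<n$ is a unit). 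Thus $\langle G_{d,m,\ell}^{\zeta}(c_0)\rangle$ is determined by the valuations $v(\beta_{m+k-1}-\beta_{m+n-1})$ for $k\mid\ell$. For $v\nmid p$ the family $z^d+c$ has good reduction, every $\beta_i$ is a $v$-unit, the cycle multiplier $d^n(\beta_m\cdots\beta_{m+n-1})^{d-1}$ is a $v$-unit, and the reduced cycle points stay distinct; a good-reduction argument as in Theorem~\ref{thm:jnotm} shows these differences are $v$-units for all $v\nmid p$, so the problem localizes above $p$.

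Now fix $v\mid p$ and reduce. Since $d=p^e$, one has $\bar f(z)=z^{p^e}+\bar c_0=\Frob^e(z)+\bar c_0$, a bijection of $\overline{k_v}$; hence the reduced postcritical orbit $(\bar\beta_i)_{i\ge 0}$ is purely periodic, say of period $n'=n'(v)$, and $n'\mid n$ because $\bar f^{\,n}$ fixes $\bar\beta_0=0$. Therefore $v(\beta_i-\beta_j)>0$ exactly when $i\equiv j\pmod{n'}$, so $v(\beta_{m+k-1}-\beta_{m+n-1})>0$ if and only if $n'\mid k$. Moreover, as $\bar f'\equiv 0$, every point of the cycle is a superattracting periodic point of $\bar f$, and for $n'\mid k$ one has $\beta_{m+k-1}=g^{(k-n)/n'}(\beta_{m+n-1})$ with $g:=f^{n'}$, lying in the superattracting residue disk of $\beta_{m+n-1}$. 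A Newton-polygon analysis of the iteration of $g$ inside this disk -- the same type of computation that produced the exact exponents $p^r(p-1)d^{m-1}$ and $p^r(p-1)(d^{m-1}-1)$ in Theorem~\ref{thm:1.1} -- should yield the precise value of $v(\beta_{m+k-1}-\beta_{m+n-1})$, which I expect to be a single positive constant $c(v)$ for all $k$ with $n'\mid k<n$.

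Granting that expectation, Möbius inversion collapses the sum: for $\ell<n$, ignoring the (unit) Gleason factor,
\[
v\big(G_{d,m,\ell}^{\zeta}(c_0)\big)=c(v)\sum_{\substack{k\mid\ell\\ n'\mid k}}\mu(\ell/k)=\begin{cases} c(v)>0 & \text{if }\ell=n'(v),\\[2pt] 0 & \text{otherwise.}\end{cases}
\]
Summing over $v\mid p$, $G_{d,m,\ell}^{\zeta}(c_0)$ fails to be a unit precisely when $\ell=n'(v)$ for some $v\mid p$. Since always $n'(v)\mid n$, this gives the ``only if'' half of the conjecture for free: if $\ell\nmid n$ then $\ell\ne n'(v)$ for every $v$, so $\langle G_{d,m,\ell}^{\zeta}(c_0)\rangle=\calO_K$. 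For the ``if'' half one must show that \emph{every} divisor $\ell$ of $n$ occurs as $n'(v)$ for some prime $v\mid p$ of $K$ (the case $\ell=n$ being trivial, since then $G_{d,m,n}^{\zeta}(c_0)=0$). Reducing $G_{d,m,n}^{\zeta}$ modulo the prime $(1-\zeta)$ of $\ZZ[\zeta]$, where $\zeta\mapsto 1$, this becomes the statement that among the characteristic-$p$ parameters obtained by reducing the roots of $G_{d,m,n}^{\zeta}$, for each divisor $\ell$ of $n$ there is one whose critical orbit has exact period $\ell$ -- equivalently, that the reduction of $G_{d,m,n}^{\zeta}$ is divisible by the Gleason polynomial $G_{p^e,0,\ell}$ for every $\ell\mid n$, with at least one root of that factor lifting to a root of $G_{d,m,n}^{\zeta}$ in characteristic zero.

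I expect this last step to be the main obstacle, and the reason the conjecture can only be proved ``in many cases'': controlling the full set $\{n'(v):v\mid p\}$ is essentially the question of how $G_{d,m,n}^{\zeta}$ factors over $\QQ_p(\zeta)$, a local shadow of the (open) irreducibility Conjecture~\ref{conj:misiurewicz_irr}. In favorable situations it can be forced -- for instance when $n$ is prime the only divisors are $1$ and $n$, so one only needs a single $v\mid p$ with $v\mid c_0$, i.e.\ $p$ dividing the constant term of $G_{d,m,n}^{\zeta}$, which can be checked directly from \eqref{eq:Gdef}; a similar short argument should handle the case where $n/\ell$ is prime. The secondary difficulty is the exact Newton-polygon computation inside the superattracting disk; even if $v(\beta_{m+k-1}-\beta_{m+n-1})$ turns out to depend on $k$ beyond its constancy on the multiples of $n'(v)$, the robust conclusion that this valuation is positive exactly when $n'(v)\mid k$ already yields the ``only if'' direction unconditionally, and only the cancellation bookkeeping in the third step would need to be revisited.
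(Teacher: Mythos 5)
First, note that the paper itself does not prove this statement: it is stated as Conjecture~\ref{conj:j=m} and explicitly deferred to the sequel \cite{BG2}, with the remark that the case $j=m$ requires techniques very different from those of Theorem~\ref{thm:jnotm}. Your sketch, besides being openly conditional in its second and third steps, has a fatal flaw at the very first step: the claim that the problem localizes above $p$. Differences of distinct points of the critical cycle need \emph{not} be $v$-units at places $v\nmid p$. A concrete counterexample: take $d=2$, $(m,n)=(2,2)$, so $G_{2,2,2}^{-1}(c)=c^2+1$ and $c_0=i$, with critical orbit $0\to i\to -1+i\to -i\to -1+i\to\cdots$. For $\ell=1$ one computes $G_{2,2,1}^{-1}(c)=c+2$, so $G_{2,2,1}^{-1}(c_0)=2+i$, which has norm $5$: it is a non-unit, as the conjecture predicts since $1\mid 2$, but the obstruction lives entirely over the prime $5$, not over $p=2$. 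Equivalently, the cycle points $-1+i$ and $-i$ collide modulo a prime above $5$, contradicting your assertion that the reduced cycle points stay distinct for $v\nmid p$. The good-reduction argument you want to imitate (Lemma~\ref{lem:div-d}) genuinely uses $j\le m-1$: one iterates $f$ forward from index $j-1$ to land on index $m-1$ and exploit $a_{m+n-1}(c_0)=\zeta a_{m-1}(c_0)$; when $j=m$ no forward iterate reaches $a_{m-1}(c_0)$, and indeed no such unit statement holds.

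The second half of the strategy also cannot work, even at places over $p$. By Proposition~\ref{prop:vefa2.4} together with Theorem~\ref{thm:vefa3.1}(2), for every $v\mid p$ one has $v(a_i(c_0))>0$ if and only if $n\mid i$, so the reduced critical orbit at such a place is purely periodic of \emph{exact} period $n$; in your notation, $n'(v)=n$ for every $v\mid p$, never a proper divisor of $n$. Hence your proposed mechanism for producing non-units, namely finding $v\mid p$ with $n'(v)=\ell$, can never fire for $\ell<n$, and your final step (showing every divisor $\ell$ of $n$ arises as some $n'(v)$ with $v\mid p$) is not merely difficult but false. Combined with the incorrect localization step, your framework would conclude that $G_{d,m,\ell}^{\zeta}(c_0)$ is a unit for every $\ell<n$, contradicting both the conjecture and the example above. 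The real content of the conjecture for proper divisors $\ell\mid n$ is an away-from-$p$ phenomenon -- collisions of cycle points at primes of good reduction, governed by multiplier considerations rather than by the wildly ramified behavior over $p$ -- which is precisely why the authors say this case needs different methods and postpone it to \cite{BG2}; a Newton-polygon analysis inside the superattracting disks over $p$, however sharp, cannot detect it.
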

The techniques needed to analyze the case $j=m$ are very different from those
used in the current paper for $j\neq m$.
Therefore we discuss the above conjecture in greater detail in
the sequel paper \cite{BG2}.

The structure of the paper is as follows.
In Section~\ref{sec:poly_proof}, we prove that $G_{d,m,n}^{\zeta}$ is a polynomial.
We prove Theorem~\ref{thm:1.1} in Section~\ref{sec:local},
answering the question posed in \cite{Gok19} in the affirmative.
We then consider the $j\neq m$ case of Question~\ref{ques:unit},
proving Theorem~\ref{thm:jnotm} for $j<m$ in Section~\ref{sec:j<m},
and for $j>m$ in Section~\ref{sec:j>m}.

\section{$G_{d,m,n}^{\zeta}$ is a polynomial}
\label{sec:poly_proof}
The purpose of this section is to prove the following theorem.
\begin{thm}
\label{thm:G_{d,m,n}_poly}
Let $d,m\geq 2$, $n\geq 1$ be integers,
and let $\zeta\neq 1$ be a $d$-th root of unity.
Then $G_{d,m,n}^{\zeta}$ is a monic polynomial in $\ZZ[\zeta][c]$,
with only simple roots.
\end{thm}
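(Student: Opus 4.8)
The plan is to show that each of the three "building block" expressions in the definition \eqref{eq:Gdef} is a polynomial with integer coefficients in $\ZZ[\zeta][c]$, then assemble them. Recall that for $k\mid n$ we are forming $\prod_{k\mid n}(a_{m+k-1}-\zeta a_{m-1})^{\mu(n/k)}$, and in the case $n\mid m-1$ an additional factor $\prod_{k\mid n}(a_k)^{-\mu(n/k)}$. The key observation is the standard divisibility coming from iteration: if $c$ is a formal variable, then in $\ZZ[c]$ the polynomial $a_{m+k-1}(c)-\zeta a_{m-1}(c)$ is divisible by $a_{m+k'-1}(c)-\zeta a_{m-1}(c)$ whenever $k'\mid k$. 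This is because $a_{i+1}-a_{j+1}=f(a_i)-f(a_j)=(a_i^d-a_j^d)$ is divisible by $a_i-a_j$, so iterating, $a_{m+k-1}-a_{m+k'-1}$ is divisible by $a_{m+k'-1}-a_{m-1}$... but more precisely one runs the argument modulo the ideal $(a_{m+k'-1}-\zeta a_{m-1})$, using that the map $c\mapsto a_{m+k'-1}-\zeta a_{m-1}$ being zero forces a "$\zeta$-twisted periodicity" on the postcritical tail. The cleanest route: work in the quotient ring $R:=\ZZ[\zeta][c]/(a_{m+k'-1}-\zeta a_{m-1})$, show that in $R$ the sequence $b_i:=a_{m+i-1}$ satisfies $b_{i+k'} = \zeta^{?}\,b_i$-type relation forcing $a_{m+k-1}\equiv \zeta a_{m-1}$ when $k'\mid k$; hence the Möbius product telescopes to a genuine polynomial.

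First I would record the elementary lemma: for any $i,j\ge 0$, $a_i-a_j$ divides $a_{i+1}-a_{j+1}$ in $\ZZ[c]$, and more generally work out how $\zeta$ interacts — specifically that modulo $(a_{m+k-1}-\zeta a_{m-1})$ one gets $a_{m+k}-\zeta a_m = a_{m+k-1}^d - \zeta a_{m-1}^d + (1-\zeta)c \cdot 0$... I need to be careful here, since $a_{i+1}=a_i^d+c$ is not linear. The honest statement is: modulo the ideal generated by $a_{m+k-1}-\zeta a_{m-1}$, we do NOT directly get a clean shift unless $\zeta^d=1$, which is exactly our hypothesis. Indeed $a_{m+k}-\zeta^d a_m \equiv a_{m+k-1}^d - \zeta^d a_{m-1}^d = (a_{m+k-1}-\zeta a_{m-1})(\cdots)$ using $\zeta^d a_{m-1}^d=(\zeta a_{m-1})^d$, so $a_{m+k}\equiv\zeta^d a_m = a_m$ modulo that ideal (since $\zeta^d=1$). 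Wait — that gives $a_{m+k}\equiv a_m$, not the twisted version; I'd then need to track the twist more carefully by instead proving $a_{m+jk-1}-\zeta a_{m-1}$ is divisible by $a_{m+k-1}-\zeta a_{m-1}$ for all $j\ge 1$, which follows by the same computation iterated: from $a_{m+k-1}\equiv\zeta a_{m-1}$ deduce $a_{m+2k-1}\equiv \zeta a_{m+k-1}\equiv\zeta\cdot\zeta a_{m-1}$? No. Let me restate: the correct inductive claim to prove is $a_{m+k-1}-\zeta a_{m-1}\mid a_{m+k'k-1}-\zeta^{k'}a_{m-1}$ hmm, the exponent bookkeeping on $\zeta$ is the subtle part and $\zeta^d=1$ is what rescues it. This exponent/twist bookkeeping is the main obstacle, and I would isolate it as a self-contained divisibility lemma about the sequence $(a_i)$ in $\ZZ[\zeta][c]$.

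Granting the divisibility lemma, the Möbius product $\prod_{k\mid n}(a_{m+k-1}-\zeta a_{m-1})^{\mu(n/k)}$ is a polynomial: group the factors as is standard for Möbius telescoping — for each $k\mid n$ with $k<n$, the factor $a_{m+k-1}-\zeta a_{m-1}$ divides $a_{m+n-1}-\zeta a_{m-1}$, and inclusion–exclusion over the divisor lattice shows the negative exponents are all absorbed (one shows the $\frp$-adic valuation at every irreducible $\frp$ of the product is $\ge 0$ by a counting argument on which $a_{m+k-1}-\zeta a_{m-1}$ are divisible by $\frp$, using that these form a chain indexed by divisibility). The same applies to the $\prod(a_k)^{-\mu(n/k)}$ factor — here one uses $a_{k'}\mid a_k$ when $k'\mid k$ (the classical fact, since $a_k(c)=f^k(0)$ and $a_{k'}(0)\mid$... via $a_k \equiv 0 \pmod{a_{k'}}$ when $k'\mid k$ because the critical orbit becomes periodic of period $k'$ mod that ideal) and in the case $n\mid m-1$ one checks the numerator's negative-valuation irreducibles are exactly cancelled. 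Then I'd verify monicity by computing leading terms: $\deg a_i = d^{i-1}$ and the leading coefficient of $a_{m+k-1}-\zeta a_{m-1}$ is $1$ (the $\zeta a_{m-1}$ term has strictly smaller degree when $k\ge 2$, and when $k=1$... one checks directly), and the Möbius exponents sum appropriately so the total degree and leading coefficient come out to a monic polynomial. Finally, simplicity of roots: a repeated root $c_0$ would be a common root of $G_{d,m,n}^\zeta$ and its derivative; I would argue via the explicit description of roots as exact-type-$(m,n)$ parameters — show that at such $c_0$ the relevant iterate map $c\mapsto a_{m+n-1}(c)-\zeta a_{m-1}(c)$ has nonzero derivative, e.g. by a transversality/Thurston-rigidity style argument or more elementarily by reducing modulo a prime of good reduction and using that $f_{d,c_0}$ being PCF is "rigid", or by citing that $G_{d,m,n}^\zeta$ divides a known separable resultant/discriminant expression. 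The separability step is where I'd expect to either invoke an external rigidity input or do a careful direct derivative computation; of the whole theorem, the $\zeta$-twist divisibility lemma is the conceptual heart and the separability is the most likely place to need an outside citation.
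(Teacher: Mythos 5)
Your outline is in the same spirit as the paper's proof (control the order of vanishing of each Möbius factor at each point/irreducible, then sum), but as written it has two genuine gaps. First, the ``$\zeta$-twist bookkeeping'' that you flag as the conceptual heart and leave unresolved has a one-line resolution that you actually wrote down and then talked yourself out of: since $\zeta^d=1$ we have $f(\zeta x)=\zeta^d x^d+c=f(x)$, so from $a_{m+k-1}\equiv\zeta a_{m-1}$ one gets $a_{m+k}\equiv a_m$ with \emph{no} residual twist, hence $a_{m+k-1+j}\equiv a_{m-1+j}$ for all $j\geq 0$, and in particular $a_{m+ik-1}\equiv a_{m+k-1}\equiv\zeta a_{m-1}$ for all $i\geq 1$. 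This is exactly the paper's Lemma~\ref{lem:vanish_order} (run at a root $\alpha$ rather than in a quotient ring). But note that what is needed is an \emph{equivalence}: writing $k$ for the minimal divisor of $n$ with $\frp\mid\la a_{m+k-1}-\zeta a_{m-1}\ra$, you must also show that $\frp\mid\la a_{m+\ell-1}-\zeta a_{m-1}\ra$ forces $k\mid\ell$; your assertion that the divisible factors ``form a chain'' is doing real work here and is not justified. The paper proves it by choosing $i,j,t$ with $ik+j\ell=tk+\gcd(k,\ell)$ and concluding $a_{m+\gcd(k,\ell)-1}\equiv\zeta a_{m-1}$, whence $\gcd(k,\ell)=k$ by minimality.

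Second, and more seriously, your counting argument records only \emph{which} factors a given irreducible $\frp$ divides, not to what order. If $e_\ell$ denotes the order of $\frp$ in $a_{m+\ell-1}-\zeta a_{m-1}$, the relevant quantity is $\sum_{k\mid\ell\mid n}\mu(n/\ell)\,e_\ell$, and divisibility alone does not make this nonnegative (already for $n/k=pq$ the values $e_k=1$, $e_{kp}=e_{kq}=e_{kpq}=2$ give $-1$). The paper closes this by invoking Epstein's transversality theorem (Theorem~A.1 of \cite{Epstein12}) to get $e_\ell=1$ exactly, so the Möbius sum is $0$ or $1$; the same input then gives the simple-roots claim for free. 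So the external rigidity citation you defer to the separability step is in fact needed (or must be replaced by a nontrivial multiplicity argument) already for polynomiality. Finally, in the case $n\mid m-1$ your sketch inverts the roles: the extra factor $\prod_{k\mid n}a_k^{-\mu(n/k)}$ is $1/G_{d,0,n}$, so it is the \emph{denominator} $G_{d,0,n}$ whose zeros must be cancelled by the numerator, and this requires two specific facts you do not supply: that $G_{d,0,n}$ has simple roots, and that a root of $G_{d,0,n}$ is not a root of $a_i$ for $i<n$ (coprimality of distinct Gleason polynomials, \cite[Lemma~30]{BEK19}), which together show that each such root is a simple root of the single numerator factor $a_{m+n-1}-\zeta a_{m-1}$ and of no other (the paper's Lemma~\ref{lem:common_root}). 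Your monicity computation is fine.
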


Our proof will require two auxiliary lemmas, as follows.

\begin{lemma}
\label{lem:vanish_order}
Let $m,n,d,\zeta$ be as in Theorem \ref{thm:G_{d,m,n}_poly}.
Let $\alpha\in\Qbar$ satisfy $a_{m+k-1}(\alpha)=\zeta a_{m-1}(\alpha)$
for some positive divisor $k$ of $n$, and suppose that $k$ is the smallest
positive divisor of $n$ for which this equality holds.
Then for any integer $\ell |n$, we have
\[ a_{m+\ell-1}(\alpha) = \zeta a_{m-1}(\alpha) \text{ }\iff\text{ } k|\ell.\]
\end{lemma}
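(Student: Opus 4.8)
The plan is to exploit the dynamical identity $a_{i+1}(\alpha) = f_{d,\alpha}^{i+1}(0) = (a_i(\alpha))^d + \alpha$, which lets me translate the relation $a_{m+\ell-1}(\alpha) = \zeta a_{m-1}(\alpha)$ into a statement about the orbit of the point $w := a_{m-1}(\alpha)$ under $f := f_{d,\alpha}$. Write $g(z) := f^{n}(z)$; since $a_{m+n-1}(\alpha) = \zeta a_{m-1}(\alpha)$ need not hold a priori — wait, it does: we only assume the equality for $k \mid n$, not for $n$ itself. Let me instead set things up around the hypothesis directly. First I would observe that $k \mid n$, and I want to show that among divisors $\ell$ of $n$, the equality $a_{m+\ell-1}(\alpha)=\zeta a_{m-1}(\alpha)$ holds exactly for the multiples of $k$. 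The direction $k \mid \ell \Rightarrow$ equality is the substantive one; the converse ($\ell$ with the equality must be a multiple of $k$) should follow by a gcd argument once I have a good structural description.

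The key step for the forward direction: suppose $a_{m+k-1}(\alpha) = \zeta a_{m-1}(\alpha)$, with $k$ minimal. Apply $f$ repeatedly. Since $f(\zeta x) $ is not simply related to $f(x)$ in general, I instead track the two orbits $\{a_{m-1+t}(\alpha)\}_{t\ge 0}$ and $\{a_{m+k-1+t}(\alpha)\}_{t \ge 0}$. The relation $a_{m+k-1} = \zeta a_{m-1}$ gives $a_{m+k-1}^d = \zeta^d a_{m-1}^d = a_{m-1}^d$ (using $\zeta^d = 1$), hence $a_{m+k} = a_{m-1}^d + \alpha = a_m$. So the orbit of $a_{m-1}(\alpha)$ is preperiodic with the point $a_m(\alpha)$ lying on a cycle whose period divides $k$ — more precisely, $a_{m+k+t}(\alpha) = a_{m+t}(\alpha)$ for all $t \ge 0$. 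This is the crucial observation: once we pass one step beyond the "$\zeta$-twist," the $\zeta$ disappears and we get genuine periodicity of period dividing $k$. Then for any $\ell$, $a_{m+\ell-1}(\alpha) = \zeta a_{m-1}(\alpha)$ is equivalent (raising to the $d$-th power and adding $\alpha$, which is injective on the relevant values — or rather, noting the $d$-th power map composed with translation preserves the equality in one direction) to $a_{m+\ell} (\alpha)= a_m(\alpha)$, i.e., to $\ell$ being a period of the cycle containing $a_m(\alpha)$; and if $k \mid \ell$, the period-dividing-$k$ property immediately gives $a_{m+\ell}(\alpha) = a_m(\alpha)$. To climb back from $a_{m+\ell}(\alpha) = a_m(\alpha)$ to $a_{m+\ell-1}(\alpha) = \zeta a_{m-1}(\alpha)$ I would use that $a_{m+\ell-1}(\alpha)^d = a_{m-1}(\alpha)^d$, so $a_{m+\ell-1}(\alpha) = \xi a_{m-1}(\alpha)$ for some $d$-th root of unity $\xi$, and then pin down $\xi = \zeta$ using minimality of $k$ together with a gcd/Bézout argument: if equality held with some $\xi$ for exponent $\ell$ and with $\zeta$ for exponent $k$, combining them at $\gcd(k,\ell)$ forces $\xi$ to be a power of $\zeta$ consistent with the cycle structure, and minimality of $k$ controls everything.

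The main obstacle I anticipate is the bookkeeping around the root of unity: the map $x \mapsto x^d$ is many-to-one, so passing from $a_{m+\ell}(\alpha) = a_m(\alpha)$ back up to the $\zeta$-twisted equality at level $\ell - 1$ requires care to ensure the correct $d$-th root of unity appears, and the converse direction (an arbitrary $\ell \mid n$ with the equality must be divisible by $k$) needs the minimality hypothesis to be leveraged precisely — presumably by showing the set $\{\ell \ge 1 : a_{m+\ell-1}(\alpha) = \zeta a_{m-1}(\alpha)\}$ is closed under subtraction (if $\ell_1 > \ell_2$ are both in it, then applying the periodicity $a_{m+\ell_1}(\alpha) = a_{m+\ell_2}(\alpha)$ and working backwards shows $\ell_1 - \ell_2$ is a period of the cycle, hence $a_{m+(\ell_1-\ell_2)}(\alpha) = a_m(\alpha)$, and then recovering the $\zeta$-twist one level down), so that it is exactly $k\ZZ_{\ge 1}$ by minimality of $k$; intersecting with the divisors of $n$ gives the claim. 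I would organize the write-up as: (1) the one-step reduction $a_{m+k-1}=\zeta a_{m-1} \Rightarrow a_{m+k} = a_m$ and hence $k$-periodicity of the tail orbit; (2) the forward implication via this periodicity; (3) the converse via closure under subtraction and minimality of $k$.
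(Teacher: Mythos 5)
Your opening step is the same as the paper's and is correct: one application of $f$ kills the root of unity (since $\zeta^d=1$), so $a_{m+k-1}(\alpha)=\zeta a_{m-1}(\alpha)$ gives $a_{m+k}(\alpha)=a_m(\alpha)$ and hence $a_{m+k+t}(\alpha)=a_{m+t}(\alpha)$ for all $t\geq 0$. The genuine gap is that both of your directions are then routed through a \emph{descent} step you never justify: from an untwisted equality one level up, such as $a_{m+\ell}(\alpha)=a_m(\alpha)$ or $a_{m+(\ell_1-\ell_2)}(\alpha)=a_m(\alpha)$, you want to ``recover the $\zeta$-twist one level down.'' Taking $d$-th roots only yields $a_{m+\ell-1}(\alpha)=\xi a_{m-1}(\alpha)$ for \emph{some} $d$-th root of unity $\xi$ (when $a_{m-1}(\alpha)\neq 0$), and your proposed way of pinning down $\xi=\zeta$ --- a gcd/B\'ezout argument ``forcing $\xi$ to be a power of $\zeta$ consistent with the cycle structure'' --- is not an argument; being a power of $\zeta$ is in any case weaker than $\xi=\zeta$. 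There is also a secondary slip: the hypothesis makes $k$ minimal only among \emph{divisors of $n$}, not among all positive integers, so ``the set is exactly $k\ZZ_{\geq 1}$ by minimality of $k$'' needs an extra step. For the direction $k\mid\ell$ the descent is easily avoided (since $m+\ell-1\geq m+k-1$, the periodicity applied at index $m+k-1$ gives $a_{m+\ell-1}(\alpha)=a_{m+k-1}(\alpha)=\zeta a_{m-1}(\alpha)$ directly), but the converse --- the heart of the lemma --- stands or falls with exactly the unproved descent, so as written the proof is incomplete.

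The paper closes this gap by never inverting $f$: every manipulation goes forward. From the hypothesis one gets $a_{m+ik-1}(\alpha)=\zeta a_{m-1}(\alpha)$ for all $i\geq 1$; assuming also $a_{m+\ell-1}(\alpha)=\zeta a_{m-1}(\alpha)$, the same reasoning gives $a_{m+j\ell-1}(\alpha)=\zeta a_{m-1}(\alpha)$ for all $j\geq 1$. Applying $f^{j\ell}$ to $a_{m+ik-1}(\alpha)=\zeta a_{m-1}(\alpha)$ sends the right-hand side forward to $a_{m+j\ell-1}(\alpha)=\zeta a_{m-1}(\alpha)$, and choosing $i,j,t\geq 1$ with $ik+j\ell=tk+\gcd(k,\ell)$, together with the periodicity, lands the twisted equality at exponent $\gcd(k,\ell)$, which is a divisor of $n$; minimality of $k$ among divisors of $n$ then forces $\gcd(k,\ell)=k$. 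If you prefer to stay closer to your cycle picture, a correct repair is: $a_m(\alpha)$ lies on a cycle of some exact period $q\mid k$, every $a_{m+t-1}(\alpha)$ with $t\geq 1$ lies on that cycle and depends only on $t\bmod q$, and the cycle's points are distinct, so $a_{m+\ell-1}(\alpha)=\zeta a_{m-1}(\alpha)=a_{m+k-1}(\alpha)$ holds exactly when $q\mid\ell$; since $q\mid k\mid n$ and $q$ itself then satisfies the twisted equality, minimality of $k$ among divisors of $n$ gives $q=k$. Either route pins down the root of unity without ever extracting $d$-th roots, which is the idea your sketch is missing.
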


\begin{proof}
Write $f:=f_{d,\alpha}$.
By definition of $k$, we have $a_{m+k-1}(\alpha) = \zeta a_{m-1}(\alpha)$.
Applying $f^k$ to both sides of this equality, we obtain $a_{m+2k-1}(\alpha) = a_{m+k-1}(\alpha)$.
Applying $f^k$ repeatedly, then, we have $a_{m+ik-1}(\alpha) = a_{m+k-1}(\alpha)$
for any integer $i\geq 1$.

Armed with this fact, we can now prove the equivalence.
For the reverse implication, i.e., assuming $k|\ell$, we have $\ell=ik$ for some $i\geq 1$, and hence
\[a_{m+\ell-1}(\alpha) = a_{m+k-1}(\alpha) = \zeta a_{m-1}(\alpha),\]
as desired.

For the forward implication, we assume $a_{m+\ell-1}(\alpha)=\zeta a_{m-1}(\alpha)$.
There exist positive integers $i,j,t\geq 1$ such that $ik + j\ell = tk +\gcd(k,\ell)$.
As we saw at the start of this proof, we have $a_{m+ik-1}(\alpha)=\zeta a_{m-1}(\alpha)$;
applying $f^{j\ell}$ yields
\[ a_{m+ik+j\ell -1}(\alpha) = a_{m+j\ell -1}(\alpha) =\zeta a_{m-1}(\alpha). \]
On the other hand, by our choice of $i,j,t$, we have
\[ a_{m+ik+j\ell -1}(\alpha) = a_{m+tk+\gcd(k,\ell)-1}(\alpha) = a_{m+\gcd(k,\ell)-1}(\alpha),\]
so that $a_{m+\gcd(k,\ell)-1}(\alpha) = \zeta a_{m-1}(\alpha)$.
But $k$ was the smallest positive divisor of $n$ satisfying $a_{m+k-1}(\alpha)=\zeta a_{m-1}(\alpha)$,
and since $1\leq \gcd(k,\ell)\leq k$ is also a divisor of $n$, we must have $\gcd(k,\ell)=k$.
That is, $k|\ell$, as desired.
\end{proof}

\begin{lemma}
\label{lem:common_root}
Let $m,n,d,\zeta$ be as in Theorem \ref{thm:G_{d,m,n}_poly},
and suppose that $n|m-1$.
Let $\alpha$ be any root of $G_{d,0,n}$, and let $i|n$ be a positive integer divisor of $n$.
Then $a_{m+i-1}(\alpha) = \zeta a_{m-1}(\alpha)$ if and only if $i=n$.
\end{lemma}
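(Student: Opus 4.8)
The plan is to exploit the hypothesis $n \mid m-1$ so that $a_{m-1}(\alpha)$ is itself a point in the periodic cycle of the critical orbit. Since $\alpha$ is a root of the Gleason polynomial $G_{d,0,n}$, the critical point $0$ has exact period $n$ under $f := f_{d,\alpha}$, so $a_n(\alpha) = a_0(\alpha) = 0$ and, more generally, $a_i(\alpha)$ depends only on $i \bmod n$. Writing $m - 1 = qn$ for some integer $q \geq 1$ (note $m \geq 2$ forces $q \geq 1$), we get $a_{m-1}(\alpha) = a_{qn}(\alpha) = 0$, and likewise $a_{m+i-1}(\alpha) = a_{qn+i}(\alpha) = a_i(\alpha)$ for any $i \mid n$. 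So the equation $a_{m+i-1}(\alpha) = \zeta a_{m-1}(\alpha)$ collapses to $a_i(\alpha) = 0$.

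It then remains to show that for a divisor $i \mid n$, we have $a_i(\alpha) = 0$ if and only if $i = n$. The "if" direction is immediate since $a_n(\alpha) = 0$ by periodicity. For the "only if" direction: if $a_i(\alpha) = 0$ for some proper divisor $i$ of $n$ with $1 \leq i < n$, then $0$ is periodic of period $i$ under $f$, contradicting that its exact period is $n$ (here I use $i \geq 1$, which holds since $i \mid n$ and $i$ is a positive divisor). Strictly, one should phrase this via exact period: the exact period of $0$ divides $i$, hence is $< n$, contradiction. This finishes the equivalence.

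I expect essentially no obstacle here; the lemma is a short bookkeeping consequence of the Gleason condition combined with $n \mid m-1$. The one point to state carefully is that the displayed equation in the lemma's conclusion has $a_{m-1}(\alpha) = 0$ on its right-hand side, so the presence of $\zeta$ becomes irrelevant — the equation is just $a_{m+i-1}(\alpha) = 0$ — and one must make sure the reduction of indices modulo $n$ is justified by $a_n(\alpha) = 0$ (equivalently $f^n(0) = 0$), which is exactly what "$\alpha$ is a root of $G_{d,0,n}$" provides, via \eqref{eq:Gleasondef} and the fact that the roots of $G_{d,0,n}$ are the parameters for which $0$ has exact period $n$.
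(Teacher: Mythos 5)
Your proof is correct and follows essentially the same route as the paper: both use $n\mid m-1$ to collapse the equation to $a_i(\alpha)=0$ and then rule out proper divisors $i<n$ of $n$. The only caveat is that your appeal to ``$0$ has exact period $n$'' is exactly the nontrivial input here; the paper justifies the needed piece ($a_i(\alpha)\neq 0$ for $1\leq i<n$) by citing that distinct Gleason polynomials are relatively prime \cite[Lemma 30]{BEK19}, so if you want a self-contained argument you should invoke that (or an equivalent transversality statement) rather than treating the exact-period characterization of roots of $G_{d,0,n}$ as automatic from the M\"obius-product definition.
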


\begin{proof}
Applying M\"{o}bius inversion to the definition of Gleason polynomials from
\eqref{eq:Gleasondef}, for any positive integer $t\geq 1$, we have
\begin{equation}
\label{eq:Mobius_Gleason}
a_{nt} = \prod_{k|nt} G_{d,0,k} = a_n \prod_{\substack{ k|nt \\ k \nmid n}} G_{d,0,k}.
\end{equation}
In particular, in the polynomial ring $\ZZ[c]$, we have $a_n | a_{nt}$ and $G_{d,0,n} | a_n$.
Thus, we have $a_{nt}(\alpha)=0$, since $G_{d,0,n}|a_{nt}$ and $\alpha$ is a root of $G_{d,0,n}$.

To prove the desired equivalence, we begin with the reverse implication, i.e., we suppose that $i=n$.
Because we have $n|m-1$ and hence also $n|m+i-1$, it follows that
$a_{m+i-1}(\alpha) = 0= \zeta a_{m-1}(\alpha)$, as desired.

Conversely, suppose $a_{m+i-1}(\alpha)=\zeta a_{m-1}(\alpha)$.
Because $n|m-1$, we have $a_{m-1}(\alpha)=0$, and hence $a_{m+i-1}(\alpha)=0$ as well.
Therefore, we have
\[ 0= f^{m+i-1}(0) = f^i\big( f^{m-1}(0) \big) = f^i(0), \]
or equivalently, $a_i(\alpha)=0$.
However, $\alpha$ was a root of $G_{d,0,n}$,
and $G_{d,0,n}$ is known to be relatively prime to $a_i$ for $1\leq i<n$.
(See, for instance, \cite[Lemma 30]{BEK19}, which shows that
the resultant of two different Gleason polynomials is $\pm 1$, and hence they share no roots.
Since $a_i$ is a product of Gleason polynomials, it is indeed relatively prime to $G_{d,0,n}$.)
Thus, we must have $i=n$, as desired.
\end{proof}

\begin{proof}[Proof of Theorem \ref{thm:G_{d,m,n}_poly}]
\textbf{Case 1}.
Suppose that $n\nmid m-1$. By definition, we have
\begin{equation}
\label{eq:Gdef1}
G_{d,m,n}^{\zeta} = \prod_{i|n} (a_{m+i-1}-\zeta a_{m-1})^{\mu(n/i)}.
\end{equation}
Let $\alpha$ be a root of $a_{m+k-1}-\zeta a_{m-1}$ for some minimal positive integer $k|n$.
By Lemma \ref{lem:vanish_order}, for any positive divisor $\ell$ of $n$, we have that
$\alpha$ is a root of $a_{m+\ell-1}-\zeta a_{m-1}$ if and only if $k|\ell$.
In that case, as shown in the proof of Theorem~A.1 of \cite{Epstein12},
the order of vanishing of $a_{m+\ell-1}-\zeta a_{m-1}$ at $\alpha$ is $1$.
Thus, the order of vanishing of $G_{d,m,n}^{\zeta}$ at $\alpha$ is
\[ \sum_{\substack{\ell | n \\ k | \ell}} \mu\bigg(\frac{n}{\ell} \bigg)
=\sum_{t|(n/k)} \mu\bigg(\frac{n/k}{t} \bigg) = \begin{cases}
1 & \text{ if } k=n, \\
0 & \text{ if } k<n,
\end{cases} \]
where we have applied the well-known identity
\begin{equation}
\label{eq:Mobius}
\sum_{t|N} \mu\bigg(\frac{N}{t}\bigg) = \begin{cases}
1 & \text{if } N=1\\
0 & \text{if } N>1.
\end{cases} 
\end{equation}
Thus, the rational function $G_{d,m,n}^{\zeta}$ has order of vanishing either $0$ or $1$
at every point of $\Qbar$.
It follows that $G_{d,m,n}^{\zeta}$ is a polynomial in $\QQ(\zeta)[c]$,
and it has only simple roots.
Finally, because all of the multiplicands in equation~\eqref{eq:Gdef1}
are monic polynomials in $\ZZ[\zeta][c]$, the polynomial
$G_{d,m,n}^{\zeta}$ is also a monic and lies in $\ZZ[\zeta][c]$.

\textbf{Case 2}.
Suppose that $n|m-1$. By definition, we have
\[G_{d,m,n}^{\zeta} =
\frac{\prod_{i|n} (a_{m+i-1}-\zeta a_{m-1})^{\mu(n/i)}}{\prod_{i|n} a_i^{\mu(n/i)}}.\]
As we saw in Case~1, the numerator is a monic polynomial in $\ZZ[\zeta][c]$ with simple roots,
so we only need to consider roots of $G_{d,0,n}= \prod_{i|n} a_i^{\mu(n/i)}$,
which is also known to have simple roots
(see, for instance, \cite[Lemma~19.1]{DH} or \cite[Proposition A.1]{Epstein12}). 

For any root $\alpha$ of $G_{d,0,n}$, Lemma~\ref{lem:common_root} says that the only
term of the numerator that has $\alpha$ as a root is when $i=n$,
i.e., the term
$(a_{m+n-1}-\zeta a_{m-1})^{\mu(n/n)} = a_{m+n-1}-\zeta a_{m-1}$,
which has a (simple) root at $\alpha$.
Thus, $G_{d,m,n}^{\zeta}$ has order of vanishing zero at $\alpha$.
As before, then, it follows that $G_{d,m,n}^{\zeta}$ is a monic polynomial in $\ZZ[\zeta][c]$,
with only simple roots.
\end{proof}

\section{Local results}
\label{sec:local}
The results of this section generalize estimates proven by the second author in \cite{Gok19}.
Throughout this section, fix integers $d,m,n$ with $d,m\geq 2$ and $n\geq 1$.
Let $c_0\in\Qbar$ be a Misiurewicz parameter of exact type $(m,n)$,
write $f:=f_{d,c_0}$, and define $K:=\QQ(c_0)$.

For any finite place $v$ of $K$, we define $K_v$ to be the $v$-adic completion of $K$,
and $\Cv$ to be the completion of an algebraic closure of $K_v$. For any $x\in\Cv$ and $r>0$,
we denote by
\[ D(x,r) = \{y\in\Cv \, : \, |y-x|_v < r \} \]
the open disk of radius $r$ centered at $x$ in $\Cv$.

We begin with the following modest strengthening of \cite[Lemma~2.4]{Gok19}.

\begin{prop}
\label{prop:vefa2.4}
If $f$ is PCF of exact type $(m,n)$, then for every finite place $v$ of $K$, either:
\begin{itemize}
\item $v(a_i(c_0))=0$ for all $i\geq 1$, or
\item $v(a_n(c_0))>0$, and for all $i\geq 1$, we have
$\dsps v(a_i(c_0)) = \begin{cases}
v(a_n(c_0)) & \text{ if } n|i, \\
0 & \text{ if } n\nmid i.
\end{cases}$
\end{itemize}
\end{prop}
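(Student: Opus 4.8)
The plan is to reduce the statement to a computation with Gleason polynomials. M\"obius inversion of the defining product \eqref{eq:Gleasondef}, carried out exactly as in \eqref{eq:Mobius_Gleason} but with an arbitrary index in place of $nt$, gives the identity
\[ a_j(c_0) = \prod_{k \mid j} G_{d,0,k}(c_0) \qquad (j \geq 1) \]
in $\calO_K$. Because $f$ is PCF of \emph{exact} type $(m,n)$ with $m \geq 2$, no $a_j(c_0)$ with $j \geq 1$ vanishes (otherwise $0$ would be periodic for $f$), so each factor is a nonzero algebraic integer, and for every finite place $v$ of $K$ we have $v\big(a_j(c_0)\big) = \sum_{k \mid j} v\big(G_{d,0,k}(c_0)\big)$. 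Thus it suffices to pin down $v\big(G_{d,0,k}(c_0)\big)$ for each $k$.

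The key input is that $G_{d,0,k}(c_0)$ is an algebraic unit in $\calO_K$ for every $k \neq n$. This is, up to the symmetry of the resultant, the computation of Buff, Epstein and Koch: they show that a Misiurewicz polynomial evaluated at a Gleason parameter is a unit unless the two periods coincide \cite[Lemma~26]{BEK19}. Using $\Res(G_{d,0,k}, G_{d,m,n}^{\zeta}) = \pm \Res(G_{d,m,n}^{\zeta}, G_{d,0,k})$, together with the facts that $\zeta \in K$ and that the monic factors of $G_{d,m,n}^{\zeta}$ over $\QQ(\zeta)$ lie in $\ZZ[\zeta][c]$ (since $G_{d,m,n}^{\zeta}$ is monic over $\ZZ[\zeta]$), a short norm argument transfers this statement to: $G_{d,0,k}(c_0)$ is a unit in $\calO_K$ whenever $k \neq n$. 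In particular $v\big(G_{d,0,k}(c_0)\big) = 0$ for every finite $v$ and every $k \neq n$.

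Feeding this back into the product formula would finish the proof. If $n \nmid j$, no divisor of $j$ equals $n$, so $a_j(c_0)$ is a product of units and $v\big(a_j(c_0)\big) = 0$ for all $v$. If $n \mid j$, say $j = tn$, then $n$ occurs exactly once among the divisors of $j$ while all other divisors are $\neq n$, so $a_{tn}(c_0) = G_{d,0,n}(c_0) \cdot (\text{unit})$ and $v\big(a_{tn}(c_0)\big) = v\big(G_{d,0,n}(c_0)\big)$ for every $t \geq 1$ and every $v$. Fixing $v$ and writing $w := v\big(G_{d,0,n}(c_0)\big) = v\big(a_n(c_0)\big) \geq 0$: if $w = 0$ we land in the first alternative; if $w > 0$, then $v\big(a_i(c_0)\big) = v\big(a_n(c_0)\big)$ when $n \mid i$ and $v\big(a_i(c_0)\big) = 0$ when $n \nmid i$, which is the second alternative.

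I expect the only real obstacle to be the unit statement for $G_{d,0,k}(c_0)$ with $k \neq n$. Granting \cite[Lemma~26]{BEK19}, what remains is bookkeeping: reconciling their $az^d+1$ normalization (which carries no explicit root of unity) with the family $z^d+c$ and its chosen $\zeta$, and upgrading ``$\Res = \pm1$'' to ``$G_{d,0,k}(c_0)$ is a unit in $\calO_{\QQ(c_0)}$'' by a Galois/norm argument, which is needed because $G_{d,m,n}^{\zeta}$ is not known to be irreducible over $\QQ(\zeta)$. Since the bare dichotomy (without the exact common value $v(a_n(c_0))$) is \cite[Lemma~2.4]{Gok19}, an alternative is to invoke that and supply only the sharpening, for which there is a purely local argument: as $0$ is a critical point of $f$, $(f^n)'(0) = \prod_{i=0}^{n-1} f'\big(a_i(c_0)\big) = 0$, so $f^n(z) = a_n(c_0) + z^2 q_n(z)$ with $q_n \in \calO_K[z]$; then $a_{(t+1)n}(c_0) = a_n(c_0) + a_{tn}(c_0)^2\, q_n\big(a_{tn}(c_0)\big)$, and once $v\big(a_{tn}(c_0)\big) = v\big(a_n(c_0)\big) > 0$ the ultrametric inequality forces $v\big(a_{(t+1)n}(c_0)\big) = v\big(a_n(c_0)\big)$.
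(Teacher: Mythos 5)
Your reduction is a genuinely different route from the paper's. The paper argues locally and self-containedly: if some $a_\ell(c_0)$ has positive valuation at $v$, then $D(0,1)$ contains a unique $v$-adically attracting periodic point $b$ of exact period $\ell$ (Theorem~4.18(b) of \cite{BenBook}); since $f^m(0)$ has exact period $n$ and lies in $f^m(D(0,1))$, this forces $\ell=n$, and the attracting property gives $|a_{nj}(c_0)|_v=|b|_v$ for all $j\geq 1$. You instead factor $a_j(c_0)=\prod_{k\mid j}G_{d,0,k}(c_0)$ and import ``$G_{d,0,k}(c_0)$ is a unit for all $k\neq n$'' from \cite[Lemma~26]{BEK19} via resultant symmetry. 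The reduction itself and the upgrade from a unit resultant to unit factors are fine (a product of algebraic integers that is a unit has unit factors, so irreducibility of $G_{d,m,n}^{\zeta}$ is not needed). But there is a concrete hole in the key input: since $1\mid j$ for every $j$, your assembly requires $G_{d,0,1}(c_0)=c_0$ to be a unit whenever $n\geq 2$, and the cited lemma cannot deliver this. Buff--Epstein--Koch work in the normalization $az^d+1$, where the critical point is never fixed, so their period-$1$ Gleason locus is empty and their resultant statement is vacuous for $k=1$; correspondingly, the conjugacy $a=c^{d-1}$ degenerates exactly at $c=0$. The statement you need is equivalent to $G_{d,m,n}^{\zeta}(0)$ being a unit in $\ZZ[\zeta]$, which is true but is of the same depth as what you are proving (in the paper it is a consequence of Corollary~\ref{cor:2.4}, i.e.\ of this very proposition), so as written the main route is circular-adjacent at $k=1$. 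For $k\geq 2$ the normalization transfer does work (both Gleason polynomials are monic and separable with the same root set, so the $az^d+1$ Gleason polynomial evaluated at $c^{d-1}$ equals $G_{d,0,k}(c)$), but this needs to be carried out rather than labeled bookkeeping, since it is exactly where the two families could diverge.

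Your fallback is in much better shape: granting that \cite[Lemma~2.4]{Gok19} gives the bare dichotomy ($v(a_i(c_0))=0$ for all $i$, or $v(a_n(c_0))>0$ with positivity exactly at indices divisible by $n$), your sharpening is correct. Indeed $(f^n)'(0)=0$ gives $f^n(z)=a_n(c_0)+z^2q_n(z)$ with $q_n\in\calO_K[z]$, so $a_{(t+1)n}(c_0)=a_n(c_0)+a_{tn}(c_0)^2q_n(a_{tn}(c_0))$, and the ultrametric inequality plus induction pins $v(a_{tn}(c_0))=v(a_n(c_0))$ for all $t$; this is precisely the ``modest strengthening'' the paper refers to. The difference is that the paper chooses not to cite \cite{Gok19} for the dichotomy but reproves it via the attracting-cycle argument, which also handles the identification of the period of the disk with $n$ — a point your fallback inherits from the quoted lemma rather than establishes.
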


Applying Proposition~\ref{prop:vefa2.4} at every finite place $v$ of $K$, we immediately obtain:

\begin{cor}
\label{cor:2.4}
For every $i\geq 1$, $\dsps \la a_i(c_0)\ra = \begin{cases}
\la a_n(c_0)\ra & \text{ if } n|i, \\
\calO_K & \text{ if } n\nmid i.
\end{cases}$
\end{cor}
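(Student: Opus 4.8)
Since an ideal of $\calO_K$ is determined by its valuation at every finite place, Corollary~\ref{cor:2.4} is just the local--global assembly of Proposition~\ref{prop:vefa2.4} over all finite $v$ (the first alternative contributes $v(a_i)=0$, the second contributes $v(a_i)=v(a_n)$ when $n\mid i$ and $0$ otherwise). So the real task is Proposition~\ref{prop:vefa2.4}. The statement is local, so the plan is to fix a finite place $v$ and normalize $v$ with $v(K^{\times})=\ZZ$. The only features of $f$ I will use are the recursion $a_{i+1}=a_i^{d}+c_0$ (equivalently $a_{i+1}-c_0=a_i^{d}$, so $v(a_{i+1}-c_0)=d\,v(a_i)$), the periodicity $a_{i+n}=a_i$ for $i\ge m$, and the fact that $0$ is a critical point of $f$ of multiplicity $d-1$. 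First, $v(c_0)\ge 0$: otherwise $v(a_i^{d})=d\,v(a_i)<v(a_i)$ propagates down the recursion, giving $v(a_i)=d^{\,i-1}v(c_0)\to-\infty$, impossible because the orbit of $0$ is finite. If $v(a_i)=0$ for all $i\ge 1$ we are in the first alternative; otherwise I let $N\ge 1$ be minimal with $v(a_N)>0$ and split on whether $v(c_0)>0$ (so $N=1$) or $v(c_0)=0$ (so $N\ge 2$).

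If $v(c_0)>0$, the recursion gives $v(a_i)=v(c_0)$ for all $i\ge 1$ by induction (if $v(a_i)=v(c_0)$ then $v(a_i^{d})=d\,v(c_0)>v(c_0)$, so $v(a_{i+1})=v(c_0)$), and hence the whole orbit lies in $\frm:=\{z\in K_v:v(z)\ge 1\}$. On $\frm$ the map $f$ is a strict contraction, since $f(z)-f(w)=(z-w)\sum_{s=0}^{d-1}z^{s}w^{\,d-1-s}$ and the second factor has valuation $\ge d-1\ge 1$ there. As $\frm$ is complete, $f^{n}$ has a unique fixed point in $\frm$; but the whole $n$-cycle $a_m,\dots,a_{m+n-1}$ lies in $\frm$, so $n=1$. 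Then ``$n\mid i$'' holds for every $i$, and the second alternative is exactly the equality $v(a_i)=v(a_1)=v(a_n)>0$ just established.

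If $v(c_0)=0$, I pass to the reduction $\bar f(z)=z^{d}+\bar c_0$ over the residue field; the reductions $\bar a_i$ are the orbit of $0$ under $\bar f$. From the recursion, $v(a_i)>0\Rightarrow v(a_{i+1}-c_0)=d\,v(a_i)>0=v(c_0)\Rightarrow v(a_{i+1})=0$, so $\bar a_i=0\Rightarrow\bar a_{i+1}=\bar c_0=\bar a_1$; hence $\bar a_{j+N}=\bar a_j$ for all $j\ge 0$, the $\bar f$-orbit of $0$ is \emph{purely} periodic of some exact period $n'$ with $0$ on the cycle, and therefore $v(a_i)>0\iff\bar a_i=0\iff n'\mid i$. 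Reducing $a_{i+n}=a_i$ shows $n'\mid n$, and $n'\ge 2$ (else $v(c_0)>0$). Now set $g:=f^{\,n'}$. Because $0$ is a critical point of $f$ of multiplicity $d-1$ while the intermediate points $a_1,\dots,a_{n'-1}$ are nonzero, $0$ is a critical point of $g$ of multiplicity $\ge d-1$, so the coefficients of $z^{1},\dots,z^{d-1}$ in $g(z)$ vanish and $g(z)=a_{n'}+\sum_{j\ge d}c_j z^{j}$ with $c_j\in\calO_K$ and $v(a_{n'})>0$. Exactly as above, $g$ maps $\frm$ into $\frm$ and is a strict contraction there (each $z^{j}-w^{j}$, $j\ge d$, carries the factor $z-w$ times something of valuation $\ge d-1$), so $g$ has a unique fixed point $\xi\in\frm$. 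This forces $n'=n$: if $I\subseteq[m,m+n-1]$ is the set of indices divisible by $n'$ (so $|I|=n/n'$), the points $\{a_i:i\in I\}$ lie in $\frm$ and form a single $g$-orbit of exact period $n/n'$, which by uniqueness of $\xi$ must be a fixed point; thus $n=n'$, $|I|=1$, and $\xi=a_{i_0}$ where $I=\{i_0\}$. Finally the $g$-orbit of $0$ is $0,a_n,a_{2n},\dots$, so the contraction bound gives $v(a_{kn}-\xi)\ge v(\xi)+k$; with $t_*:=v(\xi)=v(a_{i_0})>0$ this yields $v(a_{kn})=t_*$ for all $k\ge 1$. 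Together with $v(a_i)>0\iff n\mid i$, this is the second alternative.

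The part I expect to be the real obstacle is seeing the common mechanism behind both cases and getting the $n'$-bookkeeping right in the second one: because $0$ is a critical point of $f$ of multiplicity $d-1$, the appropriate return map --- $f$ itself when $v(c_0)>0$, the iterate $f^{\,n'}$ when $v(c_0)=0$ --- loses its degree $1,\dots,d-1$ terms at $0$ and is therefore a $v$-adic contraction on the residue disk around the critical orbit, so it has a unique periodic point there; that uniqueness is what simultaneously pins down $n'=n$ and forces all the nonzero valuations $v(a_{kn})$ to coincide. The care is in verifying that the reduction identifies $\{\,i:v(a_i)>0\,\}$ with the multiples of $n'$, that $f^{\,n'}$ genuinely has no degree-$1$-through-$(d-1)$ terms at $0$, and that the $n/n'$ relevant cycle points really sit in a single residue disk; note that the reduction step is only used combinatorially, so no separability of $\bar f$ is needed, which matters when $v$ divides $d$.
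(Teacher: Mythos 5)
Your handling of the corollary itself is exactly the paper's: Proposition~\ref{prop:vefa2.4} is applied at every finite place and the ideal identity is read off valuation by valuation. Where you genuinely diverge is that you re-prove Proposition~\ref{prop:vefa2.4} rather than quoting it: the paper's proof of that proposition invokes Theorem~4.18(b) of \cite{BenBook} as a black box, obtaining a unique $v$-adically attracting periodic point in the unit disk of $\Cv$ as soon as some $a_\ell(c_0)$ has positive valuation, then identifies its period with $n$ and uses attraction to equate the valuations $v(a_{nj}(c_0))$. You instead argue from scratch inside the complete field $K_v$: after excluding $v(c_0)<0$ and splitting on $v(c_0)>0$ versus $v(c_0)=0$, you use reduction modulo the maximal ideal to show that $\{i: v(a_i(c_0))>0\}$ is exactly the set of multiples of a return time $n'$ dividing $n$, note that since $0$ is a critical point of multiplicity $d-1$ the return map $f^{n'}$ has no terms of degree $1,\dots,d-1$ at $0$ and hence is a strict contraction on $\frm$, and then let uniqueness of the Banach fixed point force $n'=n$ and pin every $v(a_{kn}(c_0))$ to $v(\xi)$. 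This is the same underlying mechanism --- a unique attracting cycle in the residue class of the critical orbit --- but implemented with elementary contraction estimates rather than the cited non-archimedean disk-dynamics theorem; your version is self-contained and makes the role of the critical multiplicity explicit, while the paper's is shorter given the reference. The argument checks out; two cosmetic remarks: you only need the multiplicity bound $\geq d-1$ (the factor $z^{d-1}$ in $(f^{n'})'$ already gives it, so the nonvanishing of $a_1,\dots,a_{n'-1}$ is not needed), and the step ``a $g$-cycle in $\frm$ must be the fixed point'' is cleanest phrased as: its points are fixed by the contraction $g^{n/n'}$, whose fixed point in $\frm$ is unique.
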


\begin{proof}[Proof of Proposition~\ref{prop:vefa2.4}]
We already know $v(a_i(c_0))\geq 0$ for all $i\geq 1$.
If $v(a_i(c_0))=0$ for all $i$, then we are in the first case, and we are done.
So we assume for the remainder of the proof that $v(a_{\ell}(c_0))>0$ for some minimal $\ell \geq 1$.

Thus, $f^\ell$ maps $D(0,1)$ onto itself multiply-to-1, and hence by Theorem~4.18(b) of \cite{BenBook},
the disk $D(0,1)$ contains a unique periodic point $b$ of $f$, which is $v$-adically attracting and of exact period $\ell$.
Because $f^m(0)$ is a periodic point of exact period $n$ lying in $f^m(D(0,1))$,
it must be in the same cycle as $b$, and hence $\ell=n$. 

Since the disk $D(0,1)$ has exact period $\ell=n$, we have $v(a_i(c_0))=0$ for all $i\geq 1$ for which $n\nmid i$.
It remains to consider $i$ of the form $i=nj$ for $j\geq 1$.

If $b=0$, then $z=0$ itself is periodic, so $m=0$, and we have $a_n(c_0)=a_i(c_0)=0$, and we are done.
Thus, we assume for the rest of the proof that $b\neq 0$.

Because the periodic point $b\neq 0$ is attracting, we have
\[ |0-b|_v > |a_n(c_0)-b|_v \geq  |a_{2n}(c_0) -b|_v \geq |a_{3n}(c_0)-b|_v \geq \cdots, \]
and hence $|a_{nj}(c_0)|_v = |b|_v$ for all $j\geq 1$. In particular, writing $i=nj$, we have
\[ v(a_i(c_0)) = v(a_{nj}(c_0)) = v( a_n(c_0)) \]
as desired.
\end{proof}

We have $a_{m+n-1}(c_0)^d=a_{m-1}(c_0)^d$ but $a_{m+n-1}(c_0)\neq a_{m-1}(c_0)$,
and hence there is a $d$-th root of unity $\zeta\neq 1$ such that $a_{m+n-1}(c_0)=\zeta a_{m-1}(c_0)$.
We also have $a_{m-1}(c_0)\neq 0$.

\begin{thm}
\label{thm:vefa3.1}
Fix a finite place $v$ of $K$. Suppose $f=f_{d,c_0}$ is PCF of exact type $(m,n)$,
with $m> 0$. Then:
\begin{enumerate}[label=\textup{(\arabic*)}]
\item If $v(d)=0$, then $v(a_i(c_0))=0$ for all $i\geq 1$.
\item If $d=p^e$ is a prime power,
let $\zeta:=a_{m+n-1}(c_0)/a_{m-1}(c_0)\neq 1$.
Let $0\leq r\leq e-1$ be the smallest nonnegative integer such that $\zeta^{p^{r+1}}=1$.
\begin{enumerate}[label=\textup{(\alph*)}]
\item If $n\nmid m-1$, then
$\dsps p^r(p-1) d^{m-1} v(a_i(c_0)) =\begin{cases}
v(p) & \text{ if } n | i, \\
0 & \text{ if } n\nmid i .
\end{cases}$
\item If $n | m-1$, then
$\dsps p^r(p-1) (d^{m-1}-1) v(a_i(c_0)) =\begin{cases}
v(p) & \text{ if } n | i, \\
0 & \text{ if } n\nmid i .
\end{cases}$
\end{enumerate}
\end{enumerate}
\end{thm}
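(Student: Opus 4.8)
The plan is to argue place-by-place. Fix a finite place $v$ of $K$. By Proposition~\ref{prop:vefa2.4} I may assume $\gamma := v(a_n(c_0)) > 0$ (otherwise every quantity in the theorem is $0$), so that $v(a_i(c_0)) = \gamma$ for $n\mid i$ and $v(a_i(c_0)) = 0$ for $n\nmid i$, $i\ge 1$; recall also $a_{m+n-1}(c_0) = \zeta a_{m-1}(c_0)$ with $\zeta\neq 1$ and $a_{m-1}(c_0)\neq 0$. The idea is to track
\[
V_s := v\!\left(a_{m+n-1-s}(c_0) - a_{m-1-s}(c_0)\right), \qquad 0\le s\le m-1,
\]
whose endpoints are $V_0 = v(1-\zeta) + v(a_{m-1}(c_0))$ and $V_{m-1} = v(a_n(c_0)) = \gamma$. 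Since $a_j(c_0) = a_{j-1}(c_0)^d + c_0$, for $0\le s\le m-2$ one has $a_{m+n-1-s}(c_0) - a_{m-1-s}(c_0) = a_{m+n-2-s}(c_0)^d - a_{m-2-s}(c_0)^d = \big(a_{m+n-2-s}(c_0) - a_{m-2-s}(c_0)\big)\,S_s$ with $S_s := \sum_{i=0}^{d-1} a_{m+n-2-s}(c_0)^i a_{m-2-s}(c_0)^{d-1-i}\in\overline{\ZZ}$; hence $V_s = V_{s+1} + v(S_s)$, so $V_0\ge V_1\ge\cdots\ge V_{m-1}$. At $s = m-2$, where $a_{m-2-s}(c_0) = a_0(c_0) = 0$, this gives $S_{m-2} = a_n(c_0)^{d-1}$ and so $V_{m-2} = d\gamma$.

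For part (1), suppose $v(d) = 0$. Then $v(1-\zeta) = 0$, because $1-\zeta$ divides $d$ in $\overline{\ZZ}$ (as $\prod_{\omega^d=1,\,\omega\neq1}(1-\omega) = d$). If $n\nmid m-1$ then $v(a_{m-1}(c_0)) = 0$, so $V_0 = 0$; but $0 = V_0\ge V_{m-1} = \gamma$ forces $\gamma = 0$, a contradiction. If $n\mid m-1$ then $v(a_{m-1}(c_0)) = \gamma$, so $V_0 = \gamma$; but $\gamma = V_0\ge V_{m-2} = d\gamma$ forces $(d-1)\gamma\le 0$, again $\gamma = 0$, a contradiction. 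Hence $v(a_i(c_0)) = 0$ for all $i$, proving (1). In particular, for part (2) it remains to treat $v\mid p$ (equivalently $v(d)\neq 0$); for the other places, $v(d) = 0$ and part (1) makes the assertions of part (2) read $0 = 0$.

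For part (2) fix $v\mid p$, so $v(p) > 0$. Since $\zeta$ has exact order $p^{r+1}$, $v(1-\zeta) > 0$ and, by the ramification of $p$ in $\QQ(\zeta_{p^{r+1}})$, $p^r(p-1)\,v(1-\zeta) = v(p)$. Reducing $a_{m+n-1}(c_0)\equiv a_{m-1}(c_0)\pmod{\frm_v}$ backward through $a_j(c_0) = a_{j-1}(c_0)^d + c_0$ and using that $x\mapsto x^{p^e}$ is injective on the finite (hence perfect) residue field, I get $a_{m+n-1-s}(c_0)\equiv a_{m-1-s}(c_0)\pmod{\frm_v}$ for $0\le s\le m-1$; at $s = m-1$ this gives $\gamma > 0$. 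The heart of the argument is the recursion $V_{s+1} = V_s/d$ for $0\le s\le m-2$. For $s = m-2$ this is just $V_{m-2} = d\gamma = dV_{m-1}$. For $s\le m-3$, set $b := a_{m-2-s}(c_0)\neq 0$ and $\pi := \big(a_{m+n-2-s}(c_0) - b\big)/b$, so $v(\pi) = V_{s+1} - v(b)\ge 0$ and
\[
a_{m+n-1-s}(c_0) - a_{m-1-s}(c_0) = b^d\big((1+\pi)^{p^e} - 1\big),\qquad\text{so}\qquad v\big((1+\pi)^{p^e}-1\big) = V_s - d\,v(b).
\]
Expanding $(1+\pi)^{p^e} - 1 = \sum_{k=1}^{p^e}\binom{p^e}{k}\pi^k$ and using $v_p\binom{p^e}{k} = e - v_p(k)$, I would prove the elementary estimate: for $v(\pi)\ge 0$, if $v\big((1+\pi)^{p^e}-1\big) < \tfrac{p}{p-1}v(p)$ then $v\big((1+\pi)^{p^e}-1\big) = p^e\,v(\pi)$. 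Applying this requires only $V_0 < \tfrac{p}{p-1}v(p)$, since $v\big((1+\pi)^{p^e}-1\big) = V_s - d\,v(b)\le V_s\le V_0$; and this holds because $V_0 = v(1-\zeta)\le\tfrac{v(p)}{p-1}$ in case (a), while in case (b) the inequality $d\gamma = V_{m-2}\le V_0 = v(1-\zeta) + \gamma$ yields $\gamma\le\tfrac{v(1-\zeta)}{d-1}$, hence $V_0\le\tfrac{d}{d-1}v(1-\zeta) < \tfrac{p}{p-1}v(p)$ — with the boundary case $p = 2$ handled by a slightly sharper a priori bound on $\gamma$ extracted from one or two more relations $V_s = V_{s+1} + v(S_s)$ (using $d\equiv 0\pmod{\frm_v}$). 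Granting this, $v(\pi) = \tfrac1d\big(V_s - d\,v(b)\big)$, so $V_{s+1} = v(b) + v(\pi) = V_s/d$.

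Iterating gives $\gamma = V_{m-1} = V_0/d^{m-1}$. In case (a), $v(a_{m-1}(c_0)) = 0$, so $V_0 = v(1-\zeta)$ and $d^{m-1}\gamma = v(1-\zeta)$; multiplying by $p^r(p-1)$ gives $p^r(p-1)d^{m-1}\gamma = v(p)$. In case (b), $v(a_{m-1}(c_0)) = \gamma$, so $V_0 = v(1-\zeta) + \gamma$ and $(d^{m-1}-1)\gamma = v(1-\zeta)$; multiplying gives $p^r(p-1)(d^{m-1}-1)\gamma = v(p)$. Combined with $v(a_i(c_0)) = \gamma$ for $n\mid i$ and $= 0$ for $n\nmid i$, this is exactly the assertion of part (2). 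The step I expect to be the main obstacle is establishing $V_{s+1} = V_s/d$: both the $p$-adic estimate for $(1+\pi)^{p^e}-1$ and — more delicately — the verification that the smallness hypothesis holds at every stage of the induction (the $p = 2$ cases being the fussiest) are precisely where the prime-power hypothesis on $d$ is genuinely used.
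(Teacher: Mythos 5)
Your argument is sound in outline, and it splits naturally into a part that genuinely differs from the paper and a part that coincides with it. For part (1) the paper argues dynamically: if $v(a_n(c_0))>0$ there is a unique attracting periodic point $b$ in $D(0,1)$ (Theorem~4.18 of \cite{BenBook}), and since $v(d)=0$ makes $f$ injective on the relevant disks, counting preimages of $b$ (or the impossibility of the critical orbit ever landing exactly on $b$) gives a contradiction. Your route via the quantities $V_s$ is more elementary and entirely valuation-theoretic: integrality gives $V_0\ge V_{m-2}=d\gamma$ and $V_0\ge V_{m-1}=\gamma$, and $(1-\zeta)\mid d$ kills $\gamma$ in both congruence cases. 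That is a real simplification of this case. For part (2), by contrast, your proof has the same skeleton as the paper's: your recursion $V_{s+1}=V_s/d$ is exactly Lemma~\ref{lem:nextpower}, your binomial/Newton-polygon estimate for $(1+\pi)^{p^e}-1$ is Lemma~\ref{lem:ppower}, and the telescoping plus $p^r(p-1)v(1-\zeta)=v(p)$ endgame is identical. Where you differ is in how the smallness hypothesis of the estimate is verified: the paper proves the lower bound $|a_{i+n}(c_0)-a_i(c_0)|_p\ge |p|_p^{1/(p-1)}|a_n(c_0)|_p$ by a downward induction and an injectivity-on-a-small-disk contradiction (Step~1 of Lemma~\ref{lem:nextpower}), whereas you bound $V_s-d\,v(b)\le V_0$ from above using monotonicity and the value of $V_0$.

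The one genuine loose end is the boundary case $d=p=2$ in your case (b): there $\gamma\le v(1-\zeta)$ only gives $V_0\le \tfrac{p}{p-1}v(p)$ with equality possible, so your strict hypothesis $V_0<\tfrac{p}{p-1}v(p)$ can fail, and the fix you gesture at (``a slightly sharper a priori bound on $\gamma$'') is not actually supplied. The clean repair is simply to prove the key estimate with non-strict inequality, exactly as Lemma~\ref{lem:ppower} does: when $v(x)=\tfrac{p}{p-1}v(p)$ the Newton polygon of $(1+t)^{p^e}-1-x$ is still a single segment of slope $-v(x)/p^e$ (the point at abscissa $p^{e-1}$ merely lies on the segment), so the conclusion $v((1+\pi)^{p^e}-1)=p^e v(\pi)$ persists and no special treatment of $p=2$ is needed. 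Two small points you should also make explicit: $v(\pi)\ge 0$ because $m+n-2-s\equiv m-2-s \pmod n$, so Proposition~\ref{prop:vefa2.4} gives $v(a_{m+n-2-s}(c_0))=v(a_{m-2-s}(c_0))=v(b)$; and $b=a_{m-2-s}(c_0)\neq 0$ because $0$ is strictly preperiodic. With those adjustments your proof is complete.
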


Applying Theorem~\ref{thm:vefa3.1} at every finite place $v$ immediately yields Theorem~\ref{thm:1.1}.


To prove part (2) of Theorem~\ref{thm:vefa3.1}, we will need the following two lemmas.
We denote by $\Cp$ the completion of an algebraic closure of the $p$-adic field $\Qp$.

\begin{lemma}
\label{lem:ppower}
Let $p$ be a prime, let $e\geq 1$ be an integer, and let $d=p^e$.
Let $a,b,c_0\in\Cp$ with $|a|_p=|b|_p>0$,
and define $f(z):=z^d+c_0$. If $|f(a)-f(b)|_p \geq |p|_p^{p/(p-1)}|b|_p^d$, then
$|f(a)-f(b)|_p = |a-b|_p^d$.
\end{lemma}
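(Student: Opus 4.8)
The plan is to reduce the statement to a short computation with $d$-th roots of unity. Since $f(z)=z^d+c_0$, we have $f(a)-f(b)=a^d-b^d$, so only $a^d-b^d$ and $a-b$ are involved. Because $\Cp$ has characteristic $0$, the polynomial $z^d-1$ splits into $d$ distinct linear factors over $\Cp$, so $a^d-b^d=\prod_{\zeta^d=1}(a-\zeta b)$ and hence $|a^d-b^d|_p=\prod_{\zeta^d=1}|a-\zeta b|_p$. Everything will then come from analyzing the individual factors $|a-\zeta b|_p=|(a-b)+(1-\zeta)b|_p$ via the ultrametric inequality.

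First I would record the single arithmetic input needed: among the $d$-th roots of unity $\zeta\neq 1$ (each of which has $p$-power order dividing $d=p^e$), the quantity $|1-\zeta|_p$ is maximal at the primitive $p^e$-th roots of unity, where it equals $\delta:=|p|_p^{1/\phi(d)}$; this uses $\Phi_{p^e}(1)=p$ together with the total ramification of $\Qp(\zeta)/\Qp$. For every other $\zeta\neq 1$ one has $|1-\zeta|_p<\delta$, and $|1-\zeta|_p=0$ only for $\zeta=1$. The key numerical point is the identity $\delta^d=|p|_p^{d/\phi(d)}=|p|_p^{p/(p-1)}$, so the hypothesis says precisely that $|a^d-b^d|_p\geq(\delta|b|_p)^d$.

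With this in hand I would split into two cases according to the size of $|a-b|_p$ relative to $\delta|b|_p$. If $|a-b|_p>\delta|b|_p$, then for every $\zeta\neq 1$ we have $|(1-\zeta)b|_p\leq\delta|b|_p<|a-b|_p$, so the ultrametric inequality gives $|a-\zeta b|_p=|a-b|_p$; multiplying over all $d$ roots of unity yields $|a^d-b^d|_p=|a-b|_p^d$ directly, with no appeal to the hypothesis. If instead $|a-b|_p\leq\delta|b|_p$, then every factor, now including the one for $\zeta=1$, satisfies $|a-\zeta b|_p\leq\max(|a-b|_p,|(1-\zeta)b|_p)\leq\delta|b|_p$, so $|a^d-b^d|_p\leq(\delta|b|_p)^d$. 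Comparing with the hypothesis $|a^d-b^d|_p\geq(\delta|b|_p)^d$ forces equality throughout the product, so in particular every factor equals $\delta|b|_p$; taking the factor for $\zeta=1$ gives $|a-b|_p=\delta|b|_p$, and therefore $|a^d-b^d|_p=(\delta|b|_p)^d=|a-b|_p^d$, as required.

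The step I expect to be the real (if modest) obstacle is the constant bookkeeping: one must correctly locate the maximal value $\delta$ of $|1-\zeta|_p$ — it occurs at the primitive $p^e$-th roots of unity, not the primitive $p$-th roots — and observe that the exponent $p/(p-1)$ in the hypothesis is exactly $d/\phi(d)$, which is precisely what makes the upper bound $(\delta|b|_p)^d$ from the second case meet the lower bound supplied by the hypothesis and lets the argument close with no residual cases. Everything else is a routine application of the root-of-unity factorization and the ultrametric inequality.
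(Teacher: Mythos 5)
Your proof is correct, but it takes a different route from the paper. The paper works on the coefficient side: setting $w=(a-b)/b$ and $x=(f(a)-f(b))/b^d$, it observes that $w$ is a root of $h(t)=(1+t)^d-1-x$, computes the Newton polygon of $(1+t)^d-1$ (vertices at $(p^r,e-r)$), and notes that the hypothesis $v(x)\le p/(p-1)$ forces the Newton polygon of $h$ to be a single segment of slope $-v(x)/d$, whence $d\,v(w)=v(x)$. You instead work on the root side: factoring $a^d-b^d=\prod_{\zeta^d=1}(a-\zeta b)$ and using the explicit cyclotomic valuations $|1-\zeta|_p$, whose maximum over $\zeta\neq 1$ is $\delta=|p|_p^{1/\phi(d)}$ with $\delta^d=|p|_p^{p/(p-1)}$ — exactly the threshold in the hypothesis. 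The two arguments are essentially dual (the slopes of the paper's Newton polygon are precisely the valuations $v(\zeta-1)$ you compute), but yours is more elementary, needing only the ultrametric inequality and $\Phi_{p^e}(1)=p$, and it yields slightly finer information: in your second case you see that $|a-b|_p$ must equal $\delta|b|_p$ exactly, and in your first case the conclusion holds with no hypothesis at all. The paper's Newton-polygon formulation is more compact and packages the case analysis into the single-segment criterion. Your bookkeeping is right where it matters: the maximum of $|1-\zeta|_p$ is attained at primitive $p^e$-th (not $p$-th) roots of unity, the case split $|a-b|_p>\delta|b|_p$ versus $|a-b|_p\le\delta|b|_p$ is exhaustive, and the equality-in-the-product step is legitimate since the hypothesis forces $a^d\neq b^d$, so all factors are nonzero.
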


\begin{proof}
Let $v$ be the valuation on $\Cp$, normalized so that $v(p)=1$.
Let $w:=(a-b)/b$ and $x:= (f(a)-f(b))/b^d$, and define
\[ g(t) := (1+t)^d - 1 = \sum_{i=1}^d \binom{d}{i} t^i \in \ZZ[t]
\quad\text{and}\quad
h(t) := g(t)-x \in \Cp[t] .\]
Then 
\[ h(w) = (1+w)^d - 1 - x =\frac{a^d-b^d}{b^d} -x = 0, \]
i.e., $w$ is a root of the polynomial $h$.
However, the Newton polygon of $g$ has vertices at $(p^r,e-r)$ for $r=0,1,\ldots,e$,
and the hypotheses say that $v(x)\leq p/(p-1)$. Thus, the Newton polygon of $h$ consists
of a single segment of length $d$ and slope $-v(x)/d$.
Hence, the root $w$ satisfies $dv(w)=v(x)$, and therefore $|w|_p^d = |x|_p$.
Multiplying both sides of this equation by $|b|_p^d$ yields the desired result.
\end{proof}

\begin{lemma}
\label{lem:nextpower}
Let $p$ be a prime, let $e\geq 1$ be an integer, let $d=p^e$, let $c_0\in\Cp$,
and suppose that $f(z):=z^d+c_0$ is PCF of exact type $(m,n)$.
Then for every $0\leq i\leq m-2$,
\[ \big|a_{i+n+1}(c_0) - a_{i+1}(c_0) \big|_p = \big|a_{i+n}(c_0) - a_i(c_0)\big|_p^d .\]
\end{lemma}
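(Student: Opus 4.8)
The plan is to prove the identity by descending induction on $i$, running from $i=m-2$ down to $i=0$ (we may assume $m\ge2$, as otherwise there is nothing to prove); normalize $v$ on $\Cp$ by $v(p)=1$ as in Lemma~\ref{lem:ppower}. At each $i$ we apply Lemma~\ref{lem:ppower} to the pair $a:=a_{i+n}(c_0)$, $b:=a_i(c_0)$: since $f(a)=a_{i+n+1}(c_0)$ and $f(b)=a_{i+1}(c_0)$, its conclusion is exactly the desired equality. The case $i=0$ needs no induction: as $f(z)-f(0)=z^d$, we get $a_{n+1}(c_0)-a_1(c_0)=f(a_n(c_0))-f(0)=a_n(c_0)^d=(a_n(c_0)-a_0(c_0))^d$. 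For $1\le i\le m-2$ we must check the two hypotheses of Lemma~\ref{lem:ppower}. The equality $|a_{i+n}(c_0)|_p=|a_i(c_0)|_p$ follows from Proposition~\ref{prop:vefa2.4}, which shows $v(a_j(c_0))$ depends only on whether $n\mid j$, and $n\mid i\iff n\mid i+n$; positivity $|a_i(c_0)|_p>0$ holds because $0$ is not periodic, so $a_i(c_0)\ne0$. Thus everything reduces to the lower bound
\[ v\big(a_{i+n+1}(c_0)-a_{i+1}(c_0)\big)\ \le\ \tfrac{p}{p-1}+d\,v\big(a_i(c_0)\big). \]

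To prove this I would exploit the known endpoint of the descent. Because $f$ is PCF of exact type $(m,n)$ we have $a_{m+n-1}(c_0)=\zeta\,a_{m-1}(c_0)$ for a $d$-th root of unity $\zeta\ne1$; let $r$ be minimal with $\zeta^{p^{r+1}}=1$, so $\zeta$ has order $p^{r+1}$ and $v(1-\zeta)=1/(p^r(p-1))\le 1/(p-1)$, whence $v\big(a_{m+n-1}(c_0)-a_{m-1}(c_0)\big)=v(1-\zeta)+v(a_{m-1}(c_0))$. If the identity is already known for all indices $>i$, chaining those identities gives $v\big(a_{i+n+1}(c_0)-a_{i+1}(c_0)\big)=d^{-(m-2-i)}\big(v(1-\zeta)+v(a_{m-1}(c_0))\big)$. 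A short computation with $v(1-\zeta)\le1/(p-1)$ then shows that the displayed lower bound holds for all $1\le i\le m-2$, with one proviso: in the case $n\mid m-1$ and $n\ge2$ one needs $\gamma:=v(a_n(c_0))$ to satisfy $\gamma\le\tfrac{p^{r+1}-1}{p^r(p-1)}$. (When $n\nmid m-1$ one has $v(a_{m-1}(c_0))=0$, and when $n\mid m-1$ with $n=1$ one has $v(a_i(c_0))=\gamma$ for all $i\ge1$ by Proposition~\ref{prop:vefa2.4}; in both of these cases the bound goes through with no constraint on $\gamma$.)

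The crux, and the step I expect to be the main obstacle, is this upper bound on $\gamma$ when $n\mid m-1$ and $n\ge2$. I would derive it from the local picture behind Proposition~\ref{prop:vefa2.4}. With $g:=f^n$, the disk $D(0,1)$ is carried into itself by $g$, the only critical point of $g$ in it is $0$ (of multiplicity $d$), and $D(0,1)$ contains the unique $f$-periodic point in it — which here must be $b:=a_{m+n-1}(c_0)=\zeta\,a_{m-1}(c_0)$, since this cycle point has positive valuation. Write $g(z)-g(0)=z^d h(z)$; here $h$ has coefficients that are polynomials in $c_0$ with integer coefficients, so $v(h(\beta))\ge0$ for every algebraic integer $\beta$ (note $c_0$ is an algebraic integer, $G_{d,m,n}^\zeta$ being monic), and $h(0)=\prod_{j=1}^{n-1}f'(a_j(c_0))$ has valuation $(n-1)e$. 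In the case $m=n+1$ one has $b=\zeta\,a_n(c_0)$, and since $\zeta^d=1$ the fixed-point relation becomes $(\zeta-1)a_n(c_0)=g(b)-g(0)=b^d h(b)=a_n(c_0)^d\,h(b)$, so $v(1-\zeta)=(d-1)\gamma+v(h(b))\ge(d-1)\gamma$ and $\gamma\le\tfrac{1}{(d-1)p^r(p-1)}\le\tfrac{p^{r+1}-1}{p^r(p-1)}$. In general $a_{m-1}(c_0)=g^{(m-1)/n}(0)$, and one must run the same argument while iterating $g$ along the tail, tracking how the $p$-power structure refines the valuation of $g^s(0)-b$ at each step — essentially a Newton-polygon computation built on estimates of the type in Lemma~\ref{lem:ppower} — and I expect this bookkeeping to be the delicate part, while still yielding a bound of the shape $\gamma\le\tfrac1{(d-1)p^r(p-1)}$.

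Granting the bound on $\gamma$, the descending induction closes cleanly: Lemma~\ref{lem:ppower} applies at $i=m-2$, giving the identity there; this validates the chained estimate of the second paragraph at $i=m-3$, so Lemma~\ref{lem:ppower} applies there, and so on down to $i=1$. Together with the trivial case $i=0$, this proves the lemma.
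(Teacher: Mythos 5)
Your overall architecture --- handle $i=0$ directly, then verify the hypothesis of Lemma~\ref{lem:ppower} at each $1\le i\le m-2$ by chaining the already-proved equalities down from the endpoint relation $a_{m+n-1}(c_0)=\zeta a_{m-1}(c_0)$ --- is coherent, and your case analysis correctly isolates where the difficulty sits. But the proposal has a genuine gap at exactly the point you yourself call the crux: when $n\mid m-1$ and $n\ge 2$, the descent cannot even begin (the problem already occurs at $i=m-2$) without the a priori bound $\gamma:=v(a_n(c_0))\le \tfrac{p^{r+1}-1}{p^r(p-1)}$, and you establish this only for $m=n+1$. For general $m$ you describe an intended computation (iterating $g=f^n$ along the tail and tracking valuations via Newton polygons) and say you expect it to yield the bound; that is a plan, not a proof. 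Nor can the bound be imported from Theorem~\ref{thm:vefa3.1}(2)(b), which would give $\gamma=\tfrac{1}{p^r(p-1)(d^{m-1}-1)}$: that theorem is proved in the paper \emph{using} the present lemma, so quoting it here would be circular. Since the deferred bookkeeping is essentially the content of the lemma itself, the case $n\mid m-1$, $n\ge 2$, $m>n+1$ is unproven as written.

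The gap is fillable, and more cheaply than your sketch suggests. Take $b:=a_{m+n-1}(c_0)$ and $y_0:=a_{m-n-1}(c_0)$; both indices are divisible by $n$, so $v(b),v(y_0)\ge\gamma$ by Proposition~\ref{prop:vefa2.4}, hence $v(b^d-y_0^d)\ge v(b-y_0)+(d-1)\gamma\ge d\gamma$ (factor $b^d-y_0^d=(b-y_0)\sum_j b^j y_0^{d-1-j}$). Since all postcritical points lie in the closed unit disk, each further application of $f$ multiplies differences by something of nonnegative valuation, so $v(1-\zeta)+\gamma=v\bigl(f^n(b)-f^n(y_0)\bigr)\ge d\gamma$, i.e. $(d-1)\gamma\le v(1-\zeta)$, which is stronger than the proviso you need (here $v(p)=1$ as in your normalization). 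For comparison, the paper's proof avoids any control on $\gamma$ altogether: it first proves the uniform claim $|a_{i+n}(c_0)-a_i(c_0)|_p\ge |p|_p^{1/(p-1)}|a_n(c_0)|_p$ for all $0\le i\le m-1$ by forward propagation --- a failure at some $i$ propagates to $i=m-1$, forcing the distinct points $a_{m+n-1}(c_0)\ne a_{m-1}(c_0)$, which share the image $a_m(c_0)$, into a disk on which $f$ is injective --- and this single estimate verifies the hypothesis of Lemma~\ref{lem:ppower} at every index at once, with no induction on the conclusions and no bound on $v(a_n(c_0))$ required.
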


\begin{proof}
\textbf{Step 1}. We claim that for every $0\leq i\leq m-1$, we have
\begin{equation}
\label{eq:diffbound}
|p|_p^{1/(p-1)} |a_n(c_0)|_p \leq |a_{i+n}(c_0)-a_i(c_0)|_p
\end{equation}
Indeed, if inequality~\eqref{eq:diffbound} fails for any $0\leq i\leq m-1$, then
because $|a_j(c_0)|_p\leq 1$ for all $j$, we have
\[ |a_{i+n+1}(c_0) - a_{i+1}(c_0)|_p = |a_{i+n}(c_0)^d -a_i(c_0)^d|_p
\leq |a_{i+n}(c_0)-a_i(c_0)|_p < |p|_p^{1/(p-1)} |a_n(c_0)|_p, \]
so that the inequality also fails for $i+1$.
By induction, then, it fails for $m-1$, meaning that
\[ |a_{m+n-1}(c_0) - a_{m-1}(c_0)|_p < |p|_p^{1/(p-1)} |a_n(c_0)|_p \le |p|_p^{1/(p-1)} |a_{m-1}(c_0)|_p, \]
where we have used the fact that $|a_{m-1}(c_0)|_p\geq |a_n(c_0)|_p$
by Proposition~\ref{prop:vefa2.4}.
However, both
the map $z\mapsto z^d$, and hence also $f$, are one-to-one on the open disk
\[ D\big(a_{m-1}(c_0),|p|_p^{1/(p-1)}|a_{m-1}(c_0)|_p\big) . \]
But the distinct points $a_{m+n-1}(c_0)$ and $a_{m-1}(c_0)$ both lie in this disk, and they both map to
$a_{m+n}(c_0)=a_m(c_0)$ under $f$.
This contradiction proves our claim.

\medskip

\textbf{Step 2}.
Note that
\begin{equation}
\label{eq:izerod}
|a_{n+1}(c_0) - a_1(c_0)|_p = |f (a_n(c_0)) - f(0)|_p = |a_n(c_0)^d|_p = |a_n(c_0)|_p^d ,
\end{equation}
yielding the desired equality for $i=0$, because $a_0=0$.
Moreover, combining equation~\eqref{eq:izerod} with inequality~\eqref{eq:diffbound},
we have $|p|_p^{1/(p-1)} |a_n(c_0)|_p \leq |a_n(c_0)|_p^d$, and hence
\[ |a_n(c)|_p \geq |p|_p^{1/((p-1)(d-1))} \geq |p|_p .\]
Inequality~\eqref{eq:diffbound} therefore implies
\[ |a_{i+n}(c_0)-a_i(c_0)|_p \geq |p|_p^{p/(p-1)} \quad\text{for all } 0\leq i\leq m-1. \]
In particular, since $|a_j(c_0)|_p\leq 1$ for all $j$, we have
\[ |f(a_{i+n}(c_0))-f(a_i(c_0))|_p \geq |p|_p^{p/(p-1)} |a_i(c_0)|_p^d   \quad\text{for all } 1\leq i\leq m-2. \]
Therefore, we may apply Lemma~\ref{lem:ppower} inductively,
yielding the desired conclusion.
\end{proof}

\begin{proof}[Proof of Theorem~\ref{thm:vefa3.1}]
\textbf{Case (1)}.
Suppose first that $v(d)=0$. If $v( a_n(c_0)) >0$, then again by Theorem~4.18(b) of \cite{BenBook},
there is a unique periodic point $b$ of $f$ in $D(0,1)$, which is $v$-adically attracting and of exact period $n$.
(And we must have $b=f^{nk}(0)$ for some $k\geq 0$ with $nk\geq m$.)
But because $v(d)=0$, we have that $f(z)=z^d+c_0$ is one-to-one on each disk $D(x,|x|)$ for $x\in\Cv^{\times}$.
In particular, $f$ is one-to-one on each disk $D(a_i(c_0),1)$ for $i=1,\ldots, n-1$,
and on the disk $D(b,|b|)$.
Thus, $f^n$ maps $D(0,1)$ $d$-to-$1$ onto $D(0,1)$,
with $D(b,|b|)$ mapping bijectively onto a (proper) subdisk of itself.

If $f^n(0)=b$, then since $f^n(b)=b$ but $b\neq 0$ (because $m\neq 0$),
the inverse image of $b$ under $f^n$ includes $0$ counted with multiplicity $d$,
and $b$ with multiplicity $1$, for a total of (at least) $d+1$, contradicting the fact that $f^n$
has degree $d$ on $D(0,1)$.

On the other hand, if $f^n(0)\neq b$, then because $b$ is attracting, we have $|f^n(0)-b|_v < |0-b|_v$,
so that $f^n(0)\in D(b,|b|)$. But then, because $f^n: D(b,|b|)\to D(b,|b|)$ is one-to-one with $b$ fixed,
the iterates $f^{nj}(0)$ are never equal to $b$ for $j\geq 1$, contradicting the fact that $b=f^{nk}(0)$ for some $k\geq 0$.
Thus, either way, we have a contradiction, and hence our original assumption that
$v(a_n(c_0))>0$ is impossible.
That is, $v(a_n(c_0))=0$. By Proposition~\ref{prop:vefa2.4},
we have $v(a_i(c_0))=0$ for all $i\geq 1$, proving statement (1).

\medskip

\textbf{Case (2)}.
For the remainder of the proof, we may assume that $d=p^e$ is a prime power,
and that $v|p$ (i.e., $v(d)>0$).
The map $f(z)=z^d+c_0$ is a bijection on the residue field, since it is a composition of Frobenius and a translation.
Thus, $f$ acts as a bijection on the (finite) set of open unit disks $\{D(x,1) : x\in\calO_K\}$.
Every disk is therefore periodic (as opposed to just preperiodic) under this action.
In particular, there is some $\ell>0$ such that $f^{\ell}(0)\in D(0,1)$.
By Proposition~\ref{prop:vefa2.4}, then, we have $v(a_n(c_0))>0$,
and $v(a_i(c_0))=v(a_n(c_0))$ if and only if $n|i$.
(And if $n\nmid i$, then $v(a_i(c_0))=0$.)
Thus, it suffices to show the desired formula in the case that $i=n$.

Let $\zeta:=a_{m+n-1}(c_0)/a_{m-1}(c_0)$, and let $0\leq r\leq e-1$ be the smallest nonnegative integer
such that $\zeta^{p^{r+1}}=1$, as in the statement of the theorem.
Then 
\begin{equation}
\label{eq:anform}
|a_n(c_0)|_p^{d^{m-1}} = |a_n(c_0) - 0|_p^{d^{m-1}}
= |a_{m+n-1}(c_0) - a_{m-1}(c_0)|_p = |\zeta-1|_p |a_{m-1}(c_0)|_p
\end{equation}
where the second equality is by repeated application of Lemma~\ref{lem:nextpower}.

If $n\nmid (m-1)$, then $|a_{m-1}(c_0)|_p=1$ by Proposition~\ref{prop:vefa2.4}, 
whence
\[ d^{m-1} v(a_n(c_0)) = v(\zeta-1) = \frac{1}{p^r(p-1)} v(p) , \]
where we have used the well known fact that
\begin{equation}
\label{eq:zeta1}
\big|\zeta-1\big|_p^{p^r(p-1)} = |p|_p .
\end{equation}
Thus, we have the desired equality
\[ p^r (p-1) d^{m-1} v(a_n(c_0)) = v(p) .\]
On the other hand, if $n | (m-1)$,
then $|a_{m-1}(c_0)|_p=|a_n(c_0)|_p$ by Proposition~\ref{prop:vefa2.4},
and therefore equation~\eqref{eq:anform} becomes
\[ d^{m-1} v(a_n(c_0)) = v(\zeta-1) + v(a_n(c_0)) = \frac{1}{p^r (p-1)} v(p) + v(a_n(c_0)), \]
and hence
\[ p^r (p-1)(d^{m-1}-1) v(a_n(c_0)) = v(p), \]
as desired.
\end{proof}

\section{$G_{d,j,\ell}^{\zeta }(c_0)$ when $j<m$.}
\label{sec:j<m}
In this section, we answer Question~\ref{ques:unit} for $j<m$ by proving
Theorem~\ref{thm:jnotm} in that case.
We begin with the following lemma,
which is an analogue of part~(1) of Theorem~\ref{thm:vefa3.1} for the
principal ideal $\la a_{j+\ell-1}(c_0)-\zeta a_{j-1}(c_0)\ra$ when $2\leq j\leq m-1$.

\begin{lemma}
\label{lem:div-d}
Let $d,m\geq 2$ and $n\geq 1$.
Let $\zeta,w\neq 1$ be nontrivial $d$-th roots of unity,
and let $c_0$ be a root of $G_{d,m,n}^{\zeta}$.
Set $L=\QQ(c_0,w)$. Suppose that $2\leq j\leq m-1$ and $\ell\geq 1$.
Then for any prime ideal $\frp$ of $\calO_L$, we have
\[ \frp | \la a_{j+\ell-1}(c_0)-w a_{j-1}(c_0)\ra\implies \frp | \la d\ra. \]
\end{lemma}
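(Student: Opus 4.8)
The plan is to prove the contrapositive: assuming $\frp\nmid\la d\ra$, I will show that $a_{j+\ell-1}(c_0)-w\,a_{j-1}(c_0)\notin\frp$. Since $c_0$ is a root of the monic polynomial $G_{d,m,n}^{\zeta}\in\ZZ[\zeta][c]$ (Theorem~\ref{thm:G_{d,m,n}_poly}), it is an algebraic integer, so every $a_i(c_0)$ lies in $\calO_L$ and may be reduced modulo $\frp$. Let $q$ be the rational prime below $\frp$; the hypothesis $\frp\nmid\la d\ra$ means $q\nmid d$. Restricting $\frp$ to $\calO_K$, where $K=\QQ(c_0)$, and applying part~(1) of Theorem~\ref{thm:vefa3.1}, I obtain $a_i(c_0)\notin\frp$ for every $i\geq 1$; in particular $a_{m-1}(c_0)\notin\frp$, a fact I will use twice below.

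Next I would pass to the residue field $k:=\calO_L/\frp$, writing $\bar x$ for the image of $x\in\calO_L$ and setting $\bar f(z):=z^{d}+\bar c_0\in k[z]$, so that $\overline{a_{i+1}(c_0)}=\bar f\big(\overline{a_i(c_0)}\big)$ for all $i\geq 0$; abbreviate $\bar a_i:=\overline{a_i(c_0)}$. Suppose, for contradiction, that $a_{j+\ell-1}(c_0)\equiv w\,a_{j-1}(c_0)\pmod{\frp}$. Raising to the $d$-th power and using $w^{d}=1$ gives $\bar a_{j+\ell-1}^{d}=\bar a_{j-1}^{d}$, and hence
\[ \bar a_{j+\ell}=\bar a_{j+\ell-1}^{d}+\bar c_0=\bar a_{j-1}^{d}+\bar c_0=\bar a_j, \]
so that $\bar a_j$ is periodic under $\bar f$ (of period dividing $\ell$). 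Independently, reducing the identities $a_{m+n}(c_0)=a_m(c_0)$ and $a_{m+i+1}(c_0)=f(a_{m+i}(c_0))$ shows that $\bar a_m$ is periodic, with $\bar f$-cycle exactly $\bar C:=\{\bar a_m,\bar a_{m+1},\dots,\bar a_{m+n-1}\}$; since $\bar C$ is finite and $\bar f$ maps $\bar C$ onto $\bar C$, the restriction $\bar f|_{\bar C}$ is a bijection.

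The heart of the argument is to locate $\bar a_{m-1}$ inside $\bar C$. Because $j\leq m-1$, the point $\bar a_{m-1}=\bar f^{(m-1)-j}(\bar a_j)$ lies in the forward orbit of the periodic point $\bar a_j$, hence in its $\bar f$-cycle; and that cycle also contains $\bar a_m=\bar f^{m-j}(\bar a_j)$, so it coincides with $\bar C$. Thus $\bar a_{m-1}\in\bar C$. On the other hand, the defining property of $c_0$ as a root of $G_{d,m,n}^{\zeta}$ is the relation $a_{m+n-1}(c_0)=\zeta\,a_{m-1}(c_0)$, which reduces to $\bar a_{m+n-1}=\bar\zeta\,\bar a_{m-1}$ with $\bar\zeta^{d}=1$. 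Since $q\nmid d$, reduction modulo $\frp$ is injective on the $d$-th roots of unity, so $\bar\zeta\neq 1$; as $\bar a_{m-1}\neq 0$, we get $\bar a_{m+n-1}\neq\bar a_{m-1}$. But $\bar a_{m-1}$ and $\bar a_{m+n-1}$ both lie in $\bar C$, and $\bar f(\bar a_{m-1})=\bar a_m=\bar a_{m+n}=\bar f(\bar a_{m+n-1})$, contradicting the injectivity of $\bar f|_{\bar C}$. This contradiction yields $a_{j+\ell-1}(c_0)-w\,a_{j-1}(c_0)\notin\frp$, as desired.

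I expect the only delicate point to be the bookkeeping that places $\bar a_{m-1}$ in the reduced cycle $\bar C$: this is precisely where the hypothesis $j\leq m-1$ is needed (so that $\bar a_{m-1}$ is a \emph{forward} iterate of the periodic point $\bar a_j$, not an earlier one), and it is also where Theorem~\ref{thm:vefa3.1}(1) — used to guarantee $\bar a_{m-1}\neq 0$ so that reduction does not collapse the $\zeta$-relation — and the defining relation $a_{m+n-1}(c_0)=\zeta\,a_{m-1}(c_0)$ — used to produce a \emph{second}, distinct $\bar f$-preimage of $\bar a_m$ inside $\bar C$ — come together. Everything else (the reductions, the finiteness/surjectivity argument for bijectivity of $\bar f|_{\bar C}$, and the injectivity of reduction on the $d$-th roots of unity) is routine.
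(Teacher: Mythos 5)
Your proof is correct, and while it shares the paper's opening move, it reaches the conclusion by a somewhat different route, so a comparison is worthwhile. Both arguments immediately convert the hypothesis $a_{j+\ell-1}(c_0)\equiv w\,a_{j-1}(c_0)\pmod{\frp}$ into $a_{j+\ell}(c_0)\equiv a_j(c_0)\pmod{\frp}$ (you by raising to the $d$-th power, the paper by applying $f$), and both ultimately hinge on the same two facts: at a prime $\frp\nmid\la d\ra$ one has $\zeta\not\equiv 1$ and $a_{m-1}(c_0)\not\equiv 0$. The paper argues forward: it iterates the congruence to $a_{m-1+n\ell}(c_0)\equiv a_{m-1}(c_0)$, uses the exact relation $a_{m-1+n\ell}(c_0)=\zeta a_{m-1}(c_0)$ to obtain $(\zeta-1)a_{m-1}(c_0)\equiv 0\pmod{\frp}$, and then concludes $\frp\mid\la d\ra$ from $\la\zeta-1\ra\mid\la d\ra$ together with the external result $\la a_{m-1}(c_0)\ra\mid\la d\ra$ of \cite[Theorem~1.4]{Gok19}, which is what makes the lemma valid for arbitrary $d$. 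You instead prove the contrapositive: assuming $\frp\nmid\la d\ra$, you get $a_i(c_0)\notin\frp$ for all $i\geq 1$ from part~(1) of Theorem~\ref{thm:vefa3.1} (the paper's own internal result, valid for all $d$), get $\bar\zeta\neq 1$ from separability of $x^d-1$ in residue characteristic prime to $d$, and then derive the contradiction by exhibiting two distinct $\bar f$-preimages $\bar a_{m-1}\neq\bar a_{m+n-1}$ of $\bar a_m$ inside the reduced cycle, contradicting injectivity of $\bar f$ on that finite cycle; the hypothesis $j\leq m-1$ is used exactly where you say, to place $\bar a_{m-1}$ on the cycle, and your argument remains valid even if the cycle collapses modulo $\frp$ to period smaller than $n$. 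What your version buys is self-containedness (no appeal to \cite{Gok19}; only the qualitative unit statement of Theorem~\ref{thm:vefa3.1}(1) is needed, not the ideal divisibility $\la a_{m-1}(c_0)\ra\mid\la d\ra$); what the paper's version buys is brevity, since the cycle-injectivity step can be replaced by the one-line computation $\bar a_{m+n-1}=\bar a_{m-1+n\ell}=\bar a_{m-1}$, and its divisibility bookkeeping is reused elsewhere in the paper.
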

\begin{proof}
Applying $f:=f_{d,c_0}$ to both sides of $a_{j+\ell-1}(c_0) \equiv w a_{j-1}(c_0) \pmod{\frp}$ yields
	\begin{equation}
	\label{eq:4}
	a_{j+\ell}(c_0) \equiv a_j(c_0) \pmod{\frp}.
	\end{equation}
Repeatedly applying $f$ to both sides of \eqref{eq:4}, we obtain
	\begin{equation}
	\label{eq:5}
	a_{k+t\ell}(c_0) \equiv a_k(c_0) \pmod{\frp}
	\end{equation}
for any $k\geq j$ and $t\geq 1$. 

In particular, using $k=m-1\geq j$ and $t=n$ in \eqref{eq:5}, we have
	\begin{equation}
	\label{eq:6}
	a_{m-1+n\ell}(c_0) \equiv a_{m-1}(c_0) \pmod{\frp}.
	\end{equation}
Since $c_0$ is a root of $G_{d,m,n}^{\zeta}$, we have $a_{m-1+n\ell}(c_0) = \zeta a_{m-1}(c_0)$.
Substituting this in~\eqref{eq:6}, it follows that
$(\zeta-1)a_{m-1}(c_0)\equiv 0 \pmod{\frp}$, and hence
\begin{equation}
\label{eq:6a}
\text{either} \quad \frp | \la \zeta - 1 \ra
\quad \text{or}\quad \frp | \la a_{m-1}(c_0)\ra .
\end{equation}
It is well known that $\la\zeta-1\ra |\la d\ra$.
Moreover, by \cite[Theorem~1.4]{Gok19}, we have $\la a_{m-1}(c_0)\ra | \la d\ra$.
(Alternatively, if $d$ is a prime power, these two facts are immediate
from~\eqref{eq:zeta1} and our Theorem~\ref{thm:1.1}, respectively.)
The desired result follows immediately from these two facts and \eqref{eq:6a}.
\end{proof}

We also need the following analogue of part~(2) of Theorem~\ref{thm:vefa3.1}
for the same setting as in Lemma~\ref{lem:div-d}, provided $d$ is a prime power.

\begin{prop}
\label{prop:3.2}
Let $m\geq 2$ and $n\geq 1$, and let $d=p^e$, where $p$ is a prime and $e\geq 1$.
Let $\zeta,w\neq 1$ be nontrivial $d$-th roots of unity,
and let $c_0$ be a root of $G_{d,m,n}^{\zeta}$.
Set $L=\QQ(c_0,w)$. Suppose that $2\leq j\leq m-1$ and $\ell\geq 1$.
\begin{enumerate}[label=\textup{(\alph*)}]
\item
If $\ell\not\equiv 0 \pmod{n}$, then $\la a_{j+\ell-1}(c_0)- w a_{j-1}(c_0)\ra=\calO_L$.
\item
If $\ell \equiv 0 \pmod{n}$, then $\la a_{j+\ell-1}(c_0)-\zeta a_{j-1}(c_0)\ra=\la a_n(c_0)\ra^{d^{j-1}}$.
\end{enumerate}
\end{prop}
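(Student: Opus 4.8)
The plan is to work place-by-place, following the template of Theorem~\ref{thm:vefa3.1}(2) but now tracking the auxiliary quantity $a_{j+\ell-1}(c_0)-w\,a_{j-1}(c_0)$ rather than $a_i(c_0)$. Fix a prime $\frp$ of $\calO_L$; by Lemma~\ref{lem:div-d}, the only $\frp$ that can possibly divide $\la a_{j+\ell-1}(c_0)-w a_{j-1}(c_0)\ra$ are those lying over $p$, so from now on assume $\frp|p$ and let $v$ be the associated valuation on $\CC_p$, normalized so $v(p)=1$. First I would record, as in Case~(2) of the proof of Theorem~\ref{thm:vefa3.1}, that $f=f_{d,c_0}$ permutes the residue disks $D(x,1)$ for $x\in\calO_L$, hence $0$ is \emph{periodic} on the level of residue disks, so $v(a_n(c_0))>0$ and $v(a_i(c_0))$ equals $v(a_n(c_0))$ when $n\mid i$ and $0$ otherwise (this is Proposition~\ref{prop:vefa2.4}, valid over $L$ as well). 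In particular $|a_{j-1}(c_0)|_p=1$ if $n\nmid(j-1)$ and $|a_{j-1}(c_0)|_p=|a_n(c_0)|_p$ if $n\mid(j-1)$.

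For part~(a): suppose $\ell\not\equiv 0\pmod n$. I claim $|a_{j+\ell-1}(c_0)-w a_{j-1}(c_0)|_p=1$, i.e.\ $v=0$ at this element. Since $f$ permutes residue disks and $a_{j-1}(c_0)\in D(b,1)$ for the residue disk $D(b,1)$ in the $n$-cycle (or $a_{j-1}(c_0)$ lies in a disk that eventually lands in the cycle), applying $f^{\ell}$ moves this disk to $D(f^{\ell}(b),1)$; because the disk orbit has exact period $n$ and $n\nmid \ell$, we get $a_{j+\ell-1}(c_0)\not\equiv a_{j-1}(c_0)\pmod{\frp}$. But $w\equiv 1\pmod{\frp}$ (as $w$ is a $d$-th root of unity and $d$ is a power of $p$, so $w-1$ has positive valuation). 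Hence $a_{j+\ell-1}(c_0)-w a_{j-1}(c_0)\equiv a_{j+\ell-1}(c_0)-a_{j-1}(c_0)\not\equiv 0\pmod{\frp}$, giving $v=0$. Since this holds for every $\frp$, the ideal is all of $\calO_L$.

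For part~(b): suppose $n\mid\ell$, say $\ell=ns$. Now $a_{j+\ell-1}(c_0)$ and $a_{j-1}(c_0)$ lie in the same residue disk, and the key is to compute $v(a_{j+ns-1}(c_0)-\zeta a_{j-1}(c_0))$ exactly. The idea is to reduce back to $i=0$ using the multiplicative behavior of $f(z)=z^d+c_0$: by Lemma~\ref{lem:nextpower} (applied with the PCF map $f$, noting $j-1\le m-2$), we have $|a_{i+n+1}(c_0)-a_{i+1}(c_0)|_p=|a_{i+n}(c_0)-a_i(c_0)|_p^{d}$ for $0\le i\le m-2$, which telescopes to give $|a_{j+n-1}(c_0)-a_{j-1}(c_0)|_p=|a_n(c_0)|_p^{d^{j-1}}$. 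More generally, iterating the cycle $s$ times and using that $a_{j+kn-1}(c_0)$ all lie in the attracting disk around the cycle point, one gets $|a_{j+ns-1}(c_0)-a_{j-1}(c_0)|_p=|a_n(c_0)|_p^{d^{j-1}}$ for every $s\ge 1$ (the higher iterates only bring the point closer to the periodic point, so the distance is controlled by the first return). Finally, since $v(a_n(c_0))>0$ forces $v(a_{j+ns-1}(c_0)-a_{j-1}(c_0))>0$ while $v(\zeta-1)>0$ and $v(a_{j-1}(c_0))$ is either $0$ or $v(a_n(c_0))$, a short case analysis comparing $a_{j+ns-1}(c_0)-\zeta a_{j-1}(c_0)=(a_{j+ns-1}(c_0)-a_{j-1}(c_0))+(1-\zeta)a_{j-1}(c_0)$ shows the valuation is $\min$ of the two—but in fact I expect the hypothesis that $c_0$ is a root of $G_{d,m,n}^\zeta$ (so $\zeta=a_{m+n-1}(c_0)/a_{m-1}(c_0)$, matching the \emph{same} root of unity that appears in part~(b)) to force $v(a_{j+ns-1}(c_0)-a_{j-1}(c_0))<v((1-\zeta)a_{j-1}(c_0))$, so that the valuation is exactly $v(a_n(c_0))\cdot d^{j-1}$, uniformly over $\frp$. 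Summing over all $\frp|p$ and invoking Corollary~\ref{cor:2.4} (which identifies $\la a_n(c_0)\ra$ as the product of these $\frp$-contributions) yields $\la a_{j+\ell-1}(c_0)-\zeta a_{j-1}(c_0)\ra=\la a_n(c_0)\ra^{d^{j-1}}$.

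The main obstacle I anticipate is part~(b): specifically, justifying rigorously that $|a_{j+ns-1}(c_0)-a_{j-1}(c_0)|_p$ does not depend on $s$ (the telescoping of Lemma~\ref{lem:nextpower} handles $s=1$ cleanly, but for $s>1$ one must argue that the contraction toward the periodic point dominates, which needs the attracting-disk analysis from Theorem~4.18(b) of \cite{BenBook} together with the bound $|a_n(c_0)|_p\ge|p|_p$ established inside the proof of Lemma~\ref{lem:nextpower}), and then pinning down that the $\zeta$-term is strictly larger in valuation so that no cancellation occurs. This last point is where the precise choice $\zeta=a_{m+n-1}(c_0)/a_{m-1}(c_0)$ is essential and must be used carefully, likely by feeding the relation $a_{m+n-1}(c_0)-a_{m-1}(c_0)=(\zeta-1)a_{m-1}(c_0)$ back through Lemma~\ref{lem:nextpower} to compare $v(\zeta-1)$ with $d^{m-1}v(a_n(c_0))$, as already done in the proof of Theorem~\ref{thm:vefa3.1}.
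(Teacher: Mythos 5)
Your route is genuinely different from the paper's: you argue place by place, first discarding all primes away from $p$ via Lemma~\ref{lem:div-d} and then computing exact valuations at $v\mid p$, whereas the paper works from a single algebraic identity. Your part~(a) is correct as written: at $v\mid p$ the residue disk $D(0,1)$ is periodic of exact period $n$ (bijectivity of $f$ on residue disks plus Proposition~\ref{prop:vefa2.4}), so when $n\nmid\ell$ the points $a_{j+\ell-1}(c_0)$ and $a_{j-1}(c_0)$ lie in disjoint unit disks, hence $|a_{j+\ell-1}(c_0)-a_{j-1}(c_0)|_v=1$, while $|(1-w)a_{j-1}(c_0)|_v<1$; the ultrametric inequality finishes it. (The paper instead expands $f^{j-1}$ via Frobenius, $a_{j+\ell-1}=a_\ell^{d^{j-1}}+pF(c_0)+a_{j-1}$, and uses Lemma~\ref{lem:div-d}, $\la 1-w\ra^{p^r(p-1)}=\la p\ra$, and Corollary~\ref{cor:2.4}.)

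The genuine gap is in part~(b), at the assertion that $|a_{j+ns-1}(c_0)-a_{j-1}(c_0)|_v=|a_n(c_0)|_v^{d^{j-1}}$ for \emph{every} $s\ge 1$: Lemma~\ref{lem:nextpower} only telescopes to the case $s=1$, and your justification for $s>1$ (``higher iterates only bring the point closer, so the distance is controlled by the first return'') is not a proof, since it tacitly assumes that $f^{j-1}$ transforms distances monotonically; for $z\mapsto z^d+c_0$ with $d=p^e$ this is not automatic once differences reach the range where several binomial terms compete. The cleanest repair stays inside your framework: rerun the proof of Lemma~\ref{lem:nextpower} with $n$ replaced by $ns$. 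Every input survives: $a_{m+ns-1}(c_0)\ne a_{m-1}(c_0)$ (otherwise the tail length would be less than $m$), both points map under $f$ to $a_{m+ns}(c_0)=a_m(c_0)$, and $|a_{ns}(c_0)|_v=|a_n(c_0)|_v\le|a_{m-1}(c_0)|_v$ by Proposition~\ref{prop:vefa2.4}; Steps 1 and 2 and Lemma~\ref{lem:ppower} then go through verbatim and give the desired equality directly. Your remaining ``I expect'' step is in fact immediate from Theorem~\ref{thm:vefa3.1}(2): $v(1-\zeta)=M\,v(a_n(c_0))$ with $M\ge d^{m-1}-1>d^{j-1}$ (recall $j\le m-1$), so $v\big((1-\zeta)a_{j-1}(c_0)\big)>d^{j-1}v(a_n(c_0))$, the ultrametric equality pins down the valuation, and summing over places gives the ideal identity. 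For comparison, the paper sidesteps the $s>1$ issue entirely: from $a_{j+\ell-1}=a_\ell^{d^{j-1}}+pF(c_0)+a_{j-1}$ it writes $a_{j+\ell-1}-\zeta a_{j-1}=a_\ell^{d^{j-1}}Q$ with $Q\equiv 1$ modulo $\la a_\ell(c_0)\ra$, using $\la a_\ell\ra^{EM}=\la p\ra$ and $\la a_\ell\ra^{M}=\la 1-\zeta\ra$, and shows $Q$ is a unit via Lemma~\ref{lem:div-d}; this is uniform in $\ell\equiv 0\pmod n$ because $\la a_\ell(c_0)\ra=\la a_n(c_0)\ra$. Your approach, once patched, buys an explicit local picture at each $v\mid p$; the paper's buys uniformity and brevity.
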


\begin{proof}
\textbf{Case (a)}.
Let $f=f_{d,c_0}$, and write
	\begin{equation}
	\label{eq:7}
	a_{j+\ell-1}(c_0)- w a_{j-1}(c_0) = f^{j-1}(a_{\ell}(c_0))-w a_{j-1}(c_0).
	\end{equation}
Expanding the expression $f^{j-1}(a_{\ell}(c_0))$, there exists a polynomial $F\in \ZZ[x]$ such that
	\[ f^{j-1}(a_{\ell}(c_0))=a_{\ell}(c_0)^{d^{j-1}}+pF(c_0)+a_{j-1}(c_0). \]
Thus, equation~\eqref{eq:7} becomes
	\begin{equation}
	\label{eq:8}
	a_{j+\ell-1}(c_0)-w a_{j-1}(c_0) = a_{\ell}(c_0)^{d^{j-1}}+pF(c_0)+(1-w)a_{j-1}(c_0).
	\end{equation}
Suppose there were a prime ideal $\frp\subseteq \calO_L$ dividing
$\la a_{j+\ell-1}(c_0)-w a_{j-1}(c_0)\ra$.
Then
\[ a_{\ell}(c_0)^{d^{j-1}}+pF(c_0)+(1-w)a_{j-1}(c_0) \equiv 0 \pmod{\frp} .\]
We have $p\equiv 0 \pmod{\frp}$ by Lemma~\ref{lem:div-d}, and hence
\[ a_{\ell}(c_0)^{d^{j-1}}+(1-w)a_{j-1}(c_0)\equiv 0 \pmod{\frp}. \]

By \eqref{eq:zeta1}, we have $\la 1-w\ra^{p^r(p-1)} = \la p\ra$ as ideals in $\calO_L$,
where $0\leq r\leq e-1$ is the smallest integer such that  $w^{p^{r+1}}=1$.
Hence $1-w \equiv 0 \pmod{\frp}$, which forces $a_{\ell}(c_0)\equiv 0 \pmod{\frp}$.
This contradicts Corollary~\ref{cor:2.4}, which says that $a_{\ell}(c_0)$ is a unit in $\calO_L$.

\textbf{Case (b)}.
Putting $\zeta$ in the role of $w$ in the proof of part~(a), we have
$\la 1-\zeta\ra^{p^r(p-1)} = \la p\ra$,
where $r$ is the same integer as in Theorem~\ref{thm:vefa3.1}. Let
\begin{equation}
\label{eq:EMdef}
E:=p^r(p-1)
\quad\text{and}\quad
M:= \begin{cases}
d^{m-1} & \text{ if } n \nmid (m-1), \\
d^{m-1} - 1 & \text{ if } n | (m-1) ,
\end{cases}
\end{equation}
so that part~(2) of Theorem~\ref{thm:vefa3.1} says $\la a_{\ell}(c_0)\ra^M = \la 1-\zeta \ra$
and $\la a_{\ell}(c_0)\ra^{EM} = \la p \ra$.
Thus, equation~\eqref{eq:8} becomes
\begin{equation}
	\label{eq:9}
	a_{j+\ell-1}(c_0)-\zeta a_{j-1}(c_0)
	= a_{\ell}(c_0)^{d^{j-1}} Q
\end{equation}
where
\begin{equation}
\label{eq:Qdef}
Q := 1+u_1 a_{\ell}(c_0)^{EM-d^{j-1}}F(c_0)+u_2 a_{\ell}(c_0)^{M-d^{j-1}}a_{j-1}(c_0)
\end{equation}
for some units $u_1,u_2$ in $\calO_L$.
Clearly $EM\geq M > d^{j-1}$, so all of the exponents in~\eqref{eq:Qdef} are positive,
and hence $Q\in\calO_L$.

Suppose there were a prime ideal $\frp\subseteq \calO_L$ such that
$Q \equiv 0 \pmod{\frp}$.
Then by~\eqref{eq:9} we would also have $a_{j+\ell-1}(c_0)-\zeta a_{j-1}(c_0)\equiv 0 \pmod{\frp}$,
so that Lemma~\ref{lem:div-d} yields $\frp | \la d \ra$, and hence $\frp | \la a_{\ell}(c_0) \ra$,
since $\la a_{\ell}(c_0)\ra^{eEM} = \la d \ra$.
Equation~\eqref{eq:Qdef} therefore yields $0 \equiv Q \equiv 1 \pmod{\frp}$, a contradiction,
so no such $\frp$ exists.
That is,
$Q$ is a unit in $\calO_L$. Equation~\eqref{eq:9} then implies
$\la a_{j+\ell-1}(c_0)-\zeta a_{j-1}(c_0)\ra = \la a_{\ell}(c_0)\ra^{d^{j-1}} = \la a_n(c_0)\ra^{d^{j-1}}$, as desired. Note that we used Theorem~\ref{thm:1.1} in the last equality.
\end{proof}

We need one more lemma before we can prove Theorem~\ref{thm:jnotm} for $j<m$.

\begin{lemma}
\label{lem:primitive-w}
Let $m\geq 2$ and $n\geq 1$, and let $d=p^e$, where $p$ is a prime and $e\geq 1$.
Let $c_0$ be a root of $G_{d,m,n}^{\zeta}$, where $\zeta\neq 1$ is a $d$-th root of unity.
Let $w$ be a primitive $d$-th root of unity, and set $L=\QQ(c_0,w)$.
Suppose that $2\leq j\leq m-1$ and $\ell\geq 1$.
Then, for any $d$-th root of unity $w'\neq 1$, we have
\[ \la a_{j+\ell-1}(c_0)-w a_{j-1}(c_0)\ra = \la a_{j+\ell-1}(c_0)-w' a_{j-1}(c_0)\ra\]
as ideals in $\calO_L$.
\end{lemma}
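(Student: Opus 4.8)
The plan is to prove the stronger statement that, at \emph{every} prime ideal $\frp$ of $\calO_L$, the valuation $v_\frp\bigl(a_{j+\ell-1}(c_0)-w a_{j-1}(c_0)\bigr)$ is independent of the choice of nontrivial $d$-th root of unity $w$; the lemma then follows by comparing $w$ with $w'$. I would split into the cases $n\nmid\ell$ and $n\mid\ell$.

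If $n\nmid\ell$, I simply apply Proposition~\ref{prop:3.2}(a): to the root of unity $w$ over $L=\QQ(c_0,w)$, and to $w'$ over $\QQ(c_0,w')\subseteq L$. The conclusions say that $a_{j+\ell-1}(c_0)-w a_{j-1}(c_0)$ and $a_{j+\ell-1}(c_0)-w' a_{j-1}(c_0)$ are both units in $\calO_L$ (the latter after extending scalars from $\QQ(c_0,w')$ to $L$), so both ideals equal $\calO_L$ and there is nothing more to do.

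If $n\mid\ell$, then by Lemma~\ref{lem:div-d} only primes $\frp\mid\la d\ra$, hence $\frp\mid p$, can occur in either ideal; fix such a $\frp$, put $v:=v_\frp$, and recall the identity~\eqref{eq:8}, in which $F\in\ZZ[x]$ does \emph{not} depend on $w$. I claim that the summand $a_\ell(c_0)^{d^{j-1}}$ is $p$-adically strictly dominant. By Proposition~\ref{prop:vefa2.4}, $v(a_\ell(c_0))=v(a_n(c_0))>0$; by Theorem~\ref{thm:vefa3.1}(2), $v(p)=EM\,v(a_n(c_0))$ with $E,M$ as in~\eqref{eq:EMdef} (and $E,M$ depend on $\zeta$, not on $w$). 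Writing the order of $w$ as $p^s$ with $1\le s\le e$, equation~\eqref{eq:zeta1} gives $v(1-w)=v(p)/\bigl(p^{s-1}(p-1)\bigr)\ge v(p)/\bigl(p^{e-1}(p-1)\bigr)$. Hence $v(pF(c_0))\ge EM\,v(a_n(c_0))$ and $v\bigl((1-w)a_{j-1}(c_0)\bigr)\ge \tfrac{EM}{p^{e-1}(p-1)}v(a_n(c_0))$, and both strictly exceed $v(a_\ell(c_0)^{d^{j-1}})=d^{j-1}v(a_n(c_0))$ provided $EM>d^{j-1}$ and $EM>p^{e-1}(p-1)d^{j-1}$. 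Since $2\le j\le m-1$ forces $m\ge 3$, one has $M\ge d^{m-1}-1>d^{m-2}\ge d^{j-1}$, while $p^{e-1}(p-1)d^{j-1}\le\tfrac{p-1}{p}d^{m-1}<d^{m-1}-1\le M\le EM$, the middle inequality being equivalent to $d^{m-1}>p$, which holds. Therefore $v\bigl(a_{j+\ell-1}(c_0)-w a_{j-1}(c_0)\bigr)=d^{j-1}v(a_n(c_0))$ at every $\frp\mid p$, independent of $w$; together with the previous paragraph this proves the lemma. (Consistently, each such ideal equals $\la a_n(c_0)\ra^{d^{j-1}}$, matching Proposition~\ref{prop:3.2}(b) when $w=\zeta$.)

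The step I expect to be the main obstacle is the uniform $p$-adic estimate in the last paragraph: one must guarantee that $(1-w)a_{j-1}(c_0)$ never attains valuation as small as $v(a_\ell(c_0)^{d^{j-1}})$, the worst case being a primitive $d$-th root of unity $w$, for which $v(1-w)$ is smallest. Fortunately this reduces to the elementary inequality $d^{m-1}>p$, valid because $2\le j\le m-1$ forces $m\ge 3$.
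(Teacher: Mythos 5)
Your proof is correct, and in the main case it takes a genuinely different route from the paper. For $n\nmid\ell$ both arguments coincide (Proposition~\ref{prop:3.2}(a) applied to $w$ and to $w'$, the latter transferred from $\QQ(c_0,w')$ up to $L$). For $n\mid\ell$, the paper never computes the valuation of $a_{j+\ell-1}(c_0)-w a_{j-1}(c_0)$: it establishes the lower bound~\eqref{eq:claimdisk}, valid for every $d$-th root of unity $\eta$, by a dynamical contradiction (if the difference were smaller than $\rho\,|a_{j-1}(c_0)|_v$, pushing forward by $f^{m-j}$ would make $a_{m+nt-1}(c_0)=\zeta a_{m-1}(c_0)$ too close to $a_{m-1}(c_0)$, contradicting $|1-\zeta|_v\geq\kappa$), and then exchanges $w$ for $w'$ via the ultrametric inequality, since $|w-w'|_v\leq\rho$. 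You instead compute the exact valuation at each $\frp\mid p$ directly from the Frobenius expansion~\eqref{eq:8}, whose three terms you compare using $v(p)=EM\,v(a_n(c_0))$, $v(a_\ell(c_0))=v(a_n(c_0))$, and $v(1-w)\geq v(p)/\bigl(p^{e-1}(p-1)\bigr)$; the strict dominance of $a_\ell(c_0)^{d^{j-1}}$ reduces to $EM>d^{j-1}$ and $EM>p^{e-1}(p-1)d^{j-1}$, i.e.\ to $d^{m-1}>p$, which indeed follows from $j\leq m-1$ (so $m\geq 3$). Your auxiliary steps are all sound: the valuation identities from Theorem~\ref{thm:vefa3.1} and Corollary~\ref{cor:2.4} are ratios of valuations of elements of $K$ and so persist at places of $L$, and Lemma~\ref{lem:div-d} (applied over $\QQ(c_0,w')\subseteq L$ as well) handles the primes away from $p$. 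What your approach buys is a stronger conclusion — it effectively extends Proposition~\ref{prop:3.2}(b) to every nontrivial $w$, giving the exact ideal $\la a_n(c_0)\ra^{d^{j-1}}$ rather than mere equality of the two ideals — at the price of the explicit numerical inequality; the paper's argument is softer, using only the extremal distances $\rho,\kappa$ and the dynamical transfer to index $m$, where the defining relation $a_{m+nt-1}(c_0)=\zeta a_{m-1}(c_0)$ can be exploited without exact valuation formulas.
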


\begin{proof}
If $\ell\not\equiv 0(\text{mod }n)$, then by Proposition~\ref{prop:3.2}.(a), we have
\[ \la a_{j+\ell-1}(c_0)-w a_{j-1}(c_0)\ra=\mathcal{O}_L
= \la a_{j+\ell-1}(c_0)-w' a_{j-1}(c_0)\ra. \]

Therefore, we may assume for the rest of the proof that $\ell\equiv 0(\text{mod }n)$.
Write $\ell=nt$ for some $t\in \NN$, and as usual, write $f:=f_{d,c_0}$.
We proceed via a local argument.

For any place $v$ of $L$ that does not divide $d$, we have
\[ |a_{j+nt-1}(c_0)-w a_{j-1}(c_0)|_v=1 = |a_{j+nt-1}(c_0)-w' a_{j-1}(c_0)|_v \]
by Lemma~\ref{lem:div-d}.
For the rest of the proof, then, let $v$ be a place of $L$ that divides $d$,
and let $\rho:=|p|_v^{p/(d(p-1))}$ and $\kappa:=|p|_v^{1/(p-1)}$,
which are the maximum and minimum $v$-adic distances (respectively)
between a nontrivial $d$-th root of unity and $1$. 

For any $x\in\Cv$ with $|x|_v < \rho$, expanding $(1+x)^d$ shows that
\[ \big|(1+x)^d - 1\big|_v < \rho^d = |p|_v \kappa . \] 
Thus, for any $b,c\in\Cv^{\times}$ with $|b-c|_v < \rho |b|_v$, we have
\begin{equation}
\label{eq:fmapbound}
\big| f(b) - f(c)\big|_v < |p|_v \kappa |b|_v^d .
\end{equation}
We claim that for any $d$-th root of unity $\eta$, we have
\begin{equation}
\label{eq:claimdisk}
\big| a_{j+nt-1}(c_0) - \eta a_{j-1}(c_0) \big|_v \geq \rho \big|a_{j-1}(c_0)\big|_v .
\end{equation}
To prove the claim, suppose inequality~\eqref{eq:claimdisk} fails for some such $\eta$.
Then by inequality~\eqref{eq:fmapbound} with $b=\eta a_{j-1}(c_0)$ and $c=a_{j+nt-1}(c_0)$,
we have
\[ \big| a_{j+nt}(c_0) - a_j(c_0) \big|_v < |p|_v \kappa \big| a_{j-1}(c_0) \big|_v^d \leq |p|_v \kappa, \]
since $|a_{j-1}(c_0)|_v \leq 1$. Applying $m-j-1\geq 0$ more iterations of $f$,
and noting that $f$ does not expand distances on $\Dbar(0,1)$, we have
\begin{equation}
\label{eq:indisk}
\big| a_{m+nt-1}(c_0) - a_{m-1}(c_0) \big|_v < |p|_v \kappa .
\end{equation}

However,  by Theorem~\ref{thm:vefa3.1}, we have
\[ v\big(a_i(c_0) \big) \leq v(p), \quad \text{i.e.,} \quad \big|a_i(c_0)\big|_v \geq |p|_v
\quad \text{for all } i\geq 1, \]
since $d^{m-1}-1\geq 2^1 - 1 \geq 1$.
Thus, inequality~\eqref{eq:indisk} yields
$|a_{m+nt-1}(c_0) - a_{m-1}(c_0)|_v < \kappa |a_{m-1}(c_0)|_v$.
However, $a_{m+nt-1}(c_0) = \zeta a_{m-1}(c_0)$, where $\zeta\neq 1$ is a $d$-th root of unity. Therefore,
since $\kappa\leq |1-\zeta|_v$, we have
\[ \kappa |a_{m-1}(c_0)|_v \leq |\zeta a_{m-1}(c_0) - a_{m-1}(c_0)|_v < \kappa |a_{m-1}(c_0)|_v.\]
This contradiction proves the claim of inequality~\eqref{eq:claimdisk}.

We now use the claim to prove the lemma. Observe that
\begin{equation}
\label{eq:omegarho}
\big| w' a_{j-1}(c_0) - w a_{j-1}(c_0) \big|_v =
|w' - w|_v \big| a_{j-1}(c_0) \big|_v \leq \rho \big| a_{j-1}(c_0) \big|_v
\leq \big| a_{j+nt-1}(c_0) - w a_{j-1}(c_0) \big|_v ,
\end{equation}
where the first inequality is by definition of $\rho$, and the second is by the claim
applied to $\eta=w$. Therefore,
\begin{align*}
\big| a_{j+nt-1}(c_0) - w' a_{j-1}(c_0) \big|_v &\leq 
\max\big\{ \big| a_{j+nt-1}(c_0) - w a_{j-1}(c_0)\big|_v,
\big| w' a_{j-1}(c_0) - w a_{j-1}(c_0) \big|_v \big\}\\
&\leq \big| a_{j+nt-1}(c_0) - w a_{j-1}(c_0) \big|_v ,
\end{align*}
where the first inequality is the non-archimedean triangle inequality,
and the second is by inequality~\eqref{eq:omegarho}.

We have just shown that 
$| a_{j+nt-1}(c_0) - w' a_{j-1}(c_0)|_v \leq | a_{j+nt-1}(c_0) - w a_{j-1}(c_0) |_v$.
Applying the same argument with the roles of $w$ and $w'$ reversed,
we similarly have
\[ \big| a_{j+nt-1}(c_0) - w a_{j-1}(c_0) \big|_v \leq \big| a_{j+nt-1}(c_0) - w' a_{j-1}(c_0) \big|_v , \]
thus proving the lemma.
\end{proof}


\begin{proof}[Proof of Theorem~\ref{thm:jnotm} for $j<m$]
We will consider the cases $\ell\not\equiv 0\pmod{n}$ and $\ell\equiv 0\pmod{n}$ separately.

\textbf{Case 1}.
Suppose that $\ell\not\equiv 0\pmod{n}$. The result is immediate from part (a) of Proposition~\ref{prop:3.2}, because by the M\"{o}bius product
definition of $G_{d,j,\ell}^{\zeta}$, we have
\[ \big\la G_{d,j,\ell}^{\zeta}(c_0)\big\ra \, \big| \,
\big\la a_{j+\ell-1}(c_0)-\zeta a_{j-1}(c_0)\big\ra \]
as ideals in $\mathcal{O}_K$.

\medskip

\textbf{Case 2}.
Suppose that $\ell\equiv 0\pmod{n}$. Set $\ell=nt$ for some $t\in \mathbb{N}$, and $L:=\mathbb{Q}(c_0,\eta)$ for some primitive $d$-th root of unity $\eta$. By \cite[Lemma 27]{BEK19}, there is a polynomial $F\in \mathbb{Z}[c]$ such that
	\begin{equation}
	\label{eq:10}
	\prod_{\substack{w^d=1\\ w\neq 1}} G_{d,j,nt}^w(c_0) = G_{d,0,nt}(c_0)^{(d-1)N_{j,nt}}+pF(c_0).
	\end{equation}
First consider the case $t>1$.
By equation~\eqref{eq:Mobius_Gleason},
we know that $G_{d,0,nt}(c_0)$ divides $\frac{a_{nt}(c_0)}{a_n(c_0)}$ in $\calO_K$.
(See also Lemma~5.4 of \cite{Looper19}.)
By Corollary~\ref{cor:2.4}, it follows that $u_1:=G_{d,0,nt}(c_0)$ is a unit in $\calO_K$.




Substituting this value in \eqref{eq:10}, we obtain
\begin{equation}
\label{eq:Gleason_unit}
\prod_{\substack{w^d=1\\ w\neq 1}} G_{d,j,nt}^w(c_0) = u_1^{(d-1)N_{j,nt}}+pF(c_0)
\end{equation}
Since we have
\[ \big\la G_{d,j,nt}^{w}(c_0)\big\ra \, \big| \,
\big\la a_{j+nt-1}(c_0)-w a_{j-1}(c_0)\big\ra \]
as ideals in $\mathcal{O}_L$, if there were a prime ideal $\mathfrak{p}\subseteq \mathcal{O}_L$ such that
$ G_{d,j,nt}^{w}(c_0)\equiv 0\pmod{\mathfrak{p}}$, then Lemma~\ref{lem:div-d} yields
$p\equiv 0\pmod{\mathfrak{p}}$.
This fact together with \eqref{eq:Gleason_unit} implies $u_1\equiv 0 \pmod{\mathfrak{p}}$, a contradiction.
Hence, there is no such a prime ideal $\mathfrak{p}\subseteq \mathcal{O}_L$,
whence $\la G_{d,j,nt}^w(c_0)\ra=\mathcal{O}_L$ for each $d$-th root of unity $w$.
In particular, we have $\la G_{d,j,\ell}^{\zeta}(c_0)\ra=\mathcal{O}_K$,
completing the proof of part (a) of Theorem~\ref{thm:jnotm} for $j<m$.

It remains to consider the case that $t=1$, i.e. $\ell=n$.
By \cite[Lemma 2.2]{Gok20}, there is a unit $u_2$ in $\calO_K$ such that $G_{d,0,n}(c_0) = u_2a_n(c_0)$. Substituting this value in \eqref{eq:10}, we obtain
\[ \prod_{\substack{w^d=1\\ w\neq 1}} G_{d,j,n}^w(c_0) = u_3 a_n(c_0)^{(d-1)N_{j,n}}+pF(c_0) \]
for some unit $u_3$ in $\calO_K$.

Define $E,M$ as in equations~\eqref{eq:EMdef}, and observe that $EM > (d-1) N_{j,n}$.
Recall from Theorem~\ref{thm:vefa3.1} that $\la a_n(c_0)\ra^{EM} = \la p\ra$.
Hence, there exists a unit $u_4$ in $\calO_K$ such that
\begin{equation}
\label{eq:Gwprod}
\prod_{\substack{w^d=1\\ w\neq 1}}  G_{d,j,n}^w(c_0)
=  a_n(c_0)^{(d-1)N_{j,n}}\big(u_3+u_4 a_n(c_0)^{EM-(d-1)N_{j,n}} F(c_0)\big).
\end{equation}

For each $w$ in the above product, $\la G_{d,j,n}^w(c_0)\ra$ divides
$\la a_{j+n-1}(c_0)-wa_{j-1}(c_0)\ra$ (as ideals in $\mathcal{O}_L$),
by the M\"{o}bius product definition of $G_{d,j,n}^w$.
By part~(b) of Proposition~\ref{prop:3.2} and by Lemma~\ref{lem:primitive-w}, then,
any prime ideal of $\mathcal{O}_L$ dividing $\la G_{d,j,n}^w(c_0)\ra$ must divide $\la a_n(c_0)\ra$.
By equation~\eqref{eq:Gwprod},
any prime ideal $\frp\subseteq \mathcal{O}_L$ dividing $\la u_3+u_4 a_n(c_0)^{EM-(d-1)N_{j,n}} F(c_0) \ra$
must divide some $\la G_{d,j,n}^w(c_0)\ra$ and hence also divides $\la a_n(c_0)\ra$.
Then
\[ u_3 \equiv u_3+u_4 a_n(c_0)^{EM-(d-1)N_{j,n}} F(c_0) \equiv 0 \pmod{\frp} ,\]
contradicting the fact that $u_3$ is a unit, and hence showing that no such $\frp$ exists.

Thus, $u_3+u_4 a_n(c_0)^{EM-N_{j,n}} F(c_0)$ must be a unit in $\calO_L$,
and hence also in $\mathcal{O}_K$. Therefore,
\[ \prod_{\substack{w^d=1\\ w\neq 1}}\la G_{d,j,n}^w(c_0)\ra = \la a_n(c_0)\ra^{(d-1)N_{j,n}} \]
as ideals in $\mathcal{O}_K$.
Finally, by Lemma~\ref{lem:primitive-w} and the definition of $G_{d,j,n}^w$,
for any $w,w'$ with $w^d=(w')^{d} = 1$ and $w, w'\neq 1$,
we have $\la G_{d,j,n}^w(c_0)\ra = \la G_{d,j,n}^{w'}(c_0)\ra$ (as ideals in $\mathcal{O}_L $).
The desired equality $\la G_{d,j,n}^{\zeta}(c_0)\ra = \la a_n(c_0)\ra^{N_{j,n}}$ follows immediately.
\end{proof}

\section{$G_{d,j,\ell}^{\zeta}(c_0)$ when $j>m$}
\label{sec:j>m}
In this section, we answer Question~\ref{ques:unit} for $j>m$ by proving Theorem~\ref{thm:jnotm} in that case.
\begin{lemma}
\label{lem:unit_j>m}
Let $m\geq 2$ and $n\geq 1$, and let $d=p^e$, where $p$ is a prime and $e\geq 1$. Let $c_0$ be a root of $G_{d,m,n}^{\zeta}$, where $\zeta\neq 1$ is a $d$-th root of unity. Set $K=\mathbb{Q}(c_0)$. Suppose that $j>m$ and $\ell\geq 1$.
\begin{enumerate}[label=\textup{(\alph*)}]
	\item If $\ell\not\equiv 0 \pmod{n}$, then $\la a_{j+\ell-1}(c_0)-\zeta a_{j-1}(c_0)\ra=\mathcal{O}_K$.
	\item If $\ell\equiv 0 \pmod{n}$, then $\la a_{j+\ell-1}(c_0)-\zeta a_{j-1}(c_0)\ra=\la (1-\zeta)a_{j-1}(c_0)\ra$ as ideals in $\mathcal{O}_K$.
\end{enumerate}
\end{lemma}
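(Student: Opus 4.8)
The plan is to prove both parts by reducing to the single observation that, since $j>m$, the indices $j-1$ and $j+\ell-1$ are both at least $m$, so $a_{j-1}(c_0)$ and $a_{j+\ell-1}(c_0)$ lie on the periodic cycle $\{a_m(c_0),\dots,a_{m+n-1}(c_0)\}$ of exact period $n$; equivalently, $a_i(c_0)=a_{i+n}(c_0)$ for every $i\geq m$. (Recall also that $\zeta=a_{m+n-1}(c_0)/a_{m-1}(c_0)\in K$, so all the ideals below make sense in $\calO_K$.) Part~(b) is then immediate: as $n\mid\ell$, the iterate $f^{\ell}$ fixes the periodic point $a_{j-1}(c_0)$, so $a_{j+\ell-1}(c_0)=f^{\ell}\big(a_{j-1}(c_0)\big)=a_{j-1}(c_0)$; hence $a_{j+\ell-1}(c_0)-\zeta a_{j-1}(c_0)=(1-\zeta)a_{j-1}(c_0)$ as elements of $K$, and the two principal ideals coincide.

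For part~(a) I would show that $a_{j+\ell-1}(c_0)-\zeta a_{j-1}(c_0)$ is a unit at every finite place of $K$. Suppose some prime $\frp$ of $\calO_K$ satisfies $a_{j+\ell-1}(c_0)\equiv\zeta a_{j-1}(c_0)\pmod{\frp}$, and let $v$ be the place below $\frp$. Applying $f=f_{d,c_0}$ (which has coefficients in $\calO_K$) and using $\zeta^{d}=1$ yields $a_{j+\ell}(c_0)\equiv a_j(c_0)\pmod{\frp}$, and iterating gives $a_{i+\ell}(c_0)\equiv a_i(c_0)\pmod{\frp}$ for all $i\geq j$. Now I split into two cases. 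If $v\nmid d$: taking $i=j-1+n$ in the last congruence and invoking periodicity gives $a_{j+\ell-1}(c_0)\equiv a_{j-1}(c_0)\pmod{\frp}$; subtracting the original congruence gives $(\zeta-1)a_{j-1}(c_0)\equiv 0\pmod{\frp}$, which is impossible because $\la\zeta-1\ra\mid\la d\ra$ forces $v(\zeta-1)=0$ while Theorem~\ref{thm:vefa3.1}(1) forces $v(a_{j-1}(c_0))=0$. If $v\mid p$: choose any $i\geq j$ divisible by $n$; then $a_i(c_0)\equiv 0\pmod{\frp}$ by Theorem~\ref{thm:vefa3.1}(2), so $a_{i+\ell}(c_0)\equiv a_i(c_0)\equiv 0\pmod{\frp}$, contradicting Corollary~\ref{cor:2.4}, which says $a_{i+\ell}(c_0)$ is a unit since $n\nmid i+\ell$ (the hypothesis in part~(a) being that $n\nmid\ell$). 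Either way we reach a contradiction, so no such $\frp$ exists.

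The computations are short, so the point requiring care is the cycle bookkeeping in the case $v\nmid d$: there $\zeta$ may well reduce modulo $\frp$ to a nontrivial root of unity, so one cannot compare $a_{j+\ell-1}(c_0)$ and $a_{j-1}(c_0)$ directly. The mechanism that makes it work is that a single application of $f$ absorbs the factor $\zeta$ (because $\zeta^{d}=1$), converting the twisted relation into an honest congruence; following the orbit once around the cycle of length $n$ then returns $a_{j+\ell-1}(c_0)\equiv a_{j-1}(c_0)\pmod{\frp}$, which clashes with the twisted relation unless $\zeta\equiv1\pmod{\frp}$, and this is excluded precisely because $\frp\nmid d$. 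Once Lemma~\ref{lem:unit_j>m} is established, Theorem~\ref{thm:jnotm} for $j>m$ will follow by substituting it into the M\"obius-product definition of $G_{d,j,\ell}^{\zeta}$, just as in the case $j<m$, together with the facts that $G_{d,0,\ell}(c_0)$ differs from $a_\ell(c_0)$ or $a_n(c_0)$ by a unit and that $a_i(c_0)$ is a unit whenever $n\nmid i$.
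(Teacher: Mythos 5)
Your proof is correct and follows essentially the same route as the paper: apply $f$ once to absorb $\zeta$ via $\zeta^d=1$, iterate and use the exact period $n$ with $j-1\geq m$ to get $a_{j+\ell-1}(c_0)\equiv a_{j-1}(c_0)\pmod{\frp}$, and then contradict the unit/non-unit pattern of the $a_i(c_0)$ from Theorem~\ref{thm:vefa3.1} and Corollary~\ref{cor:2.4}; part~(b) is the same one-line periodicity observation. The only difference is organizational: you split into $v\nmid d$ versus $v\mid p$, whereas the paper first deduces $\frp\mid\la p\ra$ from $(1-\zeta)a_{j-1}(c_0)\equiv 0$ and then reaches the same contradiction uniformly.
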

\begin{proof}
\textbf{Case (a)}.
Suppose for the sake of contradiction that there exists a prime ideal $\mathfrak{p}\subseteq \mathcal{O}_K$ that satisfies $a_{j+\ell-1}(c_0)-\zeta a_{j-1}(c_0)\equiv 0 \pmod{\mathfrak{p}}$, i.e.
\begin{equation}
\label{eq:j>m_zeta}
a_{j+\ell-1}(c_0)\equiv \zeta a_{j-1}(c_0) \pmod{\mathfrak{p}}.
\end{equation} 
Applying $n$ iterations of $f:=f_{d,c_0}$ to both sides of (\ref{eq:j>m_zeta}), we obtain
\[ a_{j+\ell+n-1}(c_0)\equiv a_{j+n-1}(c_0)\pmod{\mathfrak{p}}. \]
Since $f$ has exact type $(m,n)$ and $j-1\geq m$, it follows that
\begin{equation}
\label{eq:j>m_zeta2}
a_{j+\ell-1}(c_0)\equiv a_{j-1}(c_0)\pmod{\mathfrak{p}}.
\end{equation}
Combining (\ref{eq:j>m_zeta}) and (\ref{eq:j>m_zeta2}) yields
\begin{equation}
\label{eq:ajminus}
(1-\zeta)a_{j-1}(c_0)\equiv 0\pmod{\mathfrak{p}}.
\end{equation}
By Theorem~\ref{thm:1.1}, we have
$\la 1-\zeta\ra |\la p\ra$ and $\la a_{j-1}(c_0)\ra |\la p\ra$,
and hence equation~\eqref{eq:ajminus} implies that $p\equiv 0\pmod{\mathfrak{p}}$.
Theorem~\ref{thm:1.1} and equation~\eqref{eq:ajminus} together also force
\begin{equation}
\label{eq:p|a_n}
a_{nt}(c_0)\equiv 0\pmod{\mathfrak{p}}
\quad \text{for any positive integer } t.
\end{equation}
Let $t$ be a positive integer with $nt\geq j$.
Applying $nt-j+1$ iterations of $f$ to both sides of \eqref{eq:j>m_zeta2} yields
\begin{equation}
\label{eq:antlant}
a_{nt+\ell}(c_0)\equiv a_{nt}(c_0)\pmod{\mathfrak{p}}.
\end{equation}
However, because $\ell\not\equiv 0\pmod{n}$,
Theorem~\ref{thm:1.1} implies that $a_{nt+\ell}(c_0)$ is a unit in $\mathcal{O}_K$;
thus, equations~\eqref{eq:p|a_n} and~\eqref{eq:antlant} contradict one another.
Hence, there is no such prime ideal $\mathfrak{p}\subseteq \mathcal{O}_K$.
That is, $\la a_{j+\ell-1}(c_0)-\zeta a_{j-1}(c_0)\ra=\mathcal{O}_K$, as desired.

\medskip

\textbf{Case (b)}. If $\ell\equiv 0\pmod{n}$, then because $f$ has exact type $(m,n)$ and $j-1\geq m$,
we obtain $a_{j+\ell-1}(c_0)=a_{j-1}(c_0)$, which immediately implies the result.
\end{proof}

\begin{proof}[Proof of Theorem~\ref{thm:jnotm} for $j>m$.]
We again consider the cases $\ell\not\equiv 0\pmod{n}$ and $\ell\equiv 0\pmod{n}$ separately.

\textbf{Case 1}.
Suppose that $\ell\not\equiv 0\pmod{n}$. The result is immediate from part (a) of Lemma~\ref{lem:unit_j>m}, because by the M\"{o}bius product
definition of $G_{d,j,\ell}^{\zeta}$, we have
\[ \big\la G_{d,j,\ell}^{\zeta}(c_0)\big\ra \, \big| \,
\big\la a_{j+\ell-1}(c_0)-\zeta a_{j-1}(c_0)\big\ra \]
as ideals in $\mathcal{O}_K$.
\medskip

\textbf{Case 2}.
Suppose that $\ell\equiv 0\pmod{n}$. Write $\ell=nt$ for some $t\in\mathbb{N}$.
We first consider the case $nt\nmid j-1$. By definition, we have
\[ G_{d,j,nt}^{\zeta}(c_0) = \prod_{k|nt} (a_{j+k-1}(c_0)-\zeta a_{j-1}(c_0) )^{\mu(nt/k)}. \]
By Lemma~\ref{lem:unit_j>m}, $a_{j+k-1}(c_0)-\zeta a_{j-1}(c_0)$ is a unit in $\mathcal{O}_K$
for each $k\not\equiv 0\pmod n$. Thus,
\begin{align*}
\la G_{d,j,nt}^{\zeta}(c_0)\ra
&= \prod_{\substack{k|nt \\ n|k}} \la a_{j+k-1}(c_0)-\zeta a_{j-1}(c_0)\ra^{\mu(nt/k)}
  =\prod_{k_1|t} \la a_{j+nk_1-1}(c_0)-\zeta a_{j-1}(c_0)\ra^{\mu(t/k_1)}\\
&=\big\la(1-\zeta)a_{j-1}(c_0)\big\ra^{\sum_{k_1|t}\mu(t/k_1)}
 =\begin{cases}
 \mathcal{O}_K & \text{if } t>1\\
  \la (1-\zeta)a_{j-1}(c_0)\ra & \text{if } t=1
 \end{cases}\\
 &=\begin{cases}
 \mathcal{O}_K & \text{if } \ell>n\\
 \la 1-\zeta\ra & \text{if } \ell=n
 \end{cases}
\end{align*}
as desired. In particular, the third equality is by Lemma~\ref{lem:unit_j>m},
the fourth is by the M\"{o}bius identity~\eqref{eq:Mobius},
and the fifth is by Theorem~\ref{thm:1.1} together with the fact that $n\nmid j-1$.

It remains to consider the case $nt  \,\mid\, j-1$.
Using the first part of Case 2 and the definition of $G_{d,j,nt}^{\zeta}$, we have
\begin{equation}
\label{eq:nt|j-1}
\la G_{d,j,nt}^{\zeta}(c_0)\ra = \begin{cases}
\prod_{k|nt} \la a_k(c_0)\ra^{-\mu(nt/k)} & \text{if } t>1\\
\la (1-\zeta)a_{j-1}(c_0)\ra \prod_{k|nt} \la a_k(c_0)\ra^{-\mu(nt/k)} & \text{if } t=1
\end{cases}
\end{equation}
as ideals in $\mathcal{O}_K$. By Theorem~\ref{thm:1.1}, (\ref{eq:nt|j-1}) immediately yields
\begin{align*}
\big\la G_{d,j,nt}^{\zeta}(c_0)\big\ra &=  \begin{cases}
\la a_n(c_0)\ra^{-\sum_{k_1|t}\mu(t/k_1)} & \text{if } t>1\\
\la (1-\zeta)a_{j-1}(c_0)\ra \la a_n(c_0)\ra^{-\sum_{k_1|t}\mu(t/k_1)} & \text{if } t=1 \end{cases}\\
&= \begin{cases}
\mathcal{O}_K & \text{if } \ell>n\\
\la 1-\zeta\ra & \text{if } \ell=n \end{cases}
\end{align*}
as desired.
Note that in the last equality, we used Theorem~\ref{thm:1.1}, equation~\eqref{eq:Mobius},
and the fact that $n  \,\mid\, j-1$.
\end{proof}

\bigskip

\textbf{Acknowledgments.}
The first author gratefully acknowledges the support of NSF grant DMS-2101925.

\end{document}